\newtheorem{Algrthm}[theorem]{Algorithm}
\newenvironment{alg}[1]{\renewcommand{\theenumi}{(S.\arabic{enumi})} 
\begin{Algrthm} #1 \begin{enumerate} \setcounter{enumi}{-1}}{\renewcommand{\theenumi}{\roman{enumi}}\end{enumerate}\end{Algrthm}}
\newcommand{\R}{\mathbb{R}} 
\newcommand{\N}{\mathbb{N}}
\newcommand{\interior}{\operatorname{int}}
\newcommand{\ri}{\operatorname{ri}}
\newcommand{\conv}{\operatorname{conv}}
\let\originalleft\left
\let\originalright\right
\renewcommand{\left}{\mathopen{}\mathclose\bgroup\originalleft}
\renewcommand{\right}{\aftergroup\egroup\originalright}
\DeclareMathOperator*{\argmin}{argmin}
\DeclareMathOperator*{\Limsup}{Lim \, sup}
\DeclareMathOperator*{\dom}{dom}
\renewcommand{\theenumi}{(\roman{enumi})}
\renewcommand{\labelenumi}{\theenumi}
\newcounter{enumv}[enumiv]
\renewcommand*{\enumerate}{%
    \ifnum \@enumdepth > 5\relax
    \@toodeep
    \else
    \advance\@enumdepth\@ne
    \edef\@enumctr{enum\romannumeral\the\@enumdepth}%
    \expandafter\list\csname label\@enumctr\endcsname{%
        \usecounter\@enumctr
        \def\makelabel##1{\hss\llap{##1}}%
    }%
    \fi
}
\title{An adaptive proximal safeguarded augmented Lagrangian method for nonsmooth DC~problems with convex constraints}
\shorttitle{An ALM for constrained DC~problems}
\author{Christian Kanzow\thanks{Institute of Mathematics, University of Würzburg, Emil-Fischer-Straße 30, 97074 Würzburg, Germany, \email{christian.kanzow@uni-wuerzburg.de}, \orcid{0000-0003-2897-2509}}
    \and Tanja Neder\thanks{Institute of Mathematics, University of Würzburg, Emil-Fischer-Straße 30, 97074 Würzburg, Germany, \email{tanja.neder@uni-wuerzburg.de}, \orcid{0000-0002-9669-1197}}%
}
\begin{document}

\maketitle

\begin{abstract}
    A proximal safeguarded augmented Lagrangian method for minimizing the difference of convex~(DC) functions over a nonempty, closed and convex set with additional linear equality as well as convex inequality constraints is presented. Thereby, all functions involved may be nonsmooth. Iterates (of the primal variable) are obtained by solving convex optimization problems as the concave part of the objective function gets approximated by an affine linearization. Under the assumption of a modified Slater constraint qualification, both convergence of the primal and dual variables to a generalized Karush-Kuhn-Tucker (KKT) point is proven, at least on a subsequence. Numerical experiments and comparison with existing solution methods are presented using some classes of constrained and nonsmooth DC problems.
\end{abstract}


\section{Introduction}

We consider a class of nonsmooth DC~problems with convex constraints of the following type:
\begin{equation} \label{eq: problemFormulation}
    \min_{x \in \R^n} f(x):=g(x)-h(x) \quad \text{s.t.} \quad Ax=b, \ c(x) \leq 0, \ x \in C \tag{P}
\end{equation}
with $g,h: \R^n \rightarrow \R$ convex, $A \in \R^{p \times n}$ (with $p\leq n$), $b \in \R^p$, $c_i: \R^n \rightarrow \R$ convex for $i=1,\dots, m$ and $\emptyset \neq C \subseteq \R^n$ closed and convex, where for a vector $y \in \R^m$ the relation $y \leq 0$ here as well as in what follows is meant componentwise, that is
\[y \leq 0 \quad :\Longleftrightarrow \quad y_i \leq 0 \ \forall i=1,\dots,m.\] 
As the objective function $f$ has a representation as the difference of two convex functions, it is called a \emph{DC~function}, whereas $g$ and $h$ are referred to as $DC~components$. (These DC~components are, of course, not unique.) 

The increasing interest in DC~optimization is due to the broad field of real world applications (see e.g.~\cite{Tuy2016,LeThi2018} and references therein for exemplary overviews). To put the focus on problems matching the above setting, let us mention questions emerging in logistics, in particular location planning~\cite{Tuy2016,Chen1998} as well as production-transportation problems~\cite{Holmberg1999}, in chance-constrained management problems~\cite{deOliveira2019}, in designing communication systems with physical layer based security~\cite{Pang2017}, in compressed sensing~\cite{Yin2015}, in the packaging industry~\cite{Horst1999} and in machine learning, for example within feature selection in the context of support vector machines~\cite{LeThi2008}.

Many methods for the solution of constrained DC~programs impose additional assumptions on the DC~objective function. In particular, it is frequently assumed that one of the DC~components has to be continuously differentiable (see e.g.~\cite{AragonArtacho2022} or DCA2 in~\cite{LeThi2014}) or even Lipschitz-differentiable (see e.g. LCDC-ALM in \cite{Sun2022}), or that the second DC~component has to be the pointwise maximum of finitely many continuously differentiable functions~\cite{Pang2017,Pang2018,Lu2022}. Extension of these approaches to general nonsmooth DC~objective functions seems to be difficult. 

Hence, we focus on constrained DC~problems where the objective function is given as the difference of two arbitrary nonsmooth convex functions. To the best of our knowledge, there are only a few other approaches dealing with this general setting. 
\cite{deOliveira2019}~presents a proximal bundle method for minimizing a nonsmooth  DC~function over a general closed and convex set. Thereby, the key idea is not only to approximate the second DC~component by a linearization, but also the first one by a cutting-plane model. No penalization of the constraints is carried out. 
An approach for minimizing nonsmooth DC~functions under linear equality constraints can be found in~\cite{Sun2022} by neglecting the smooth part of the first DC~component while considering Composite LCDC-ALM. Then, further restrictions are implicitly embedded as the first DC~component of the objective function is assumed to be proper, lower semicontinuous and convex with compact domain over which it is itself supposed to be Lipschitz-continuous. The crucial point of the method is, to replace once more in each iteration the concave part by a linearization before then considering the Moreau envelope of the resulting augmented Lagrangian function.
In~\mbox{\cite{vanAckooij2019,vanAckooij2019b}} the authors introduce a method for minimizing a nonsmooth DC~function over a closed and convex set which gets further restricted by (without loss of generality one single) DC-type inequality. Adopting the idea of the classical DC~Algorithm (DCA,~\cite{LeThi1996}), one linearizes in each iteration the second DC~component in both, the objective function and the constraint. The resulting subproblems though need not to be solved exactly. Instead, computing a point of the modified feasible set which provides a sufficient decrease in the model function is adequate. However, a feasible point is required for initialization in order to avoid infeasibility of the subproblems which might otherwise occur due to relaxing the DC~type constraint. In case one omits the latter constraint, the method boils down to (a proximal version of) DCA~\cite{LeThi1996} but with the modification of allowing for inexact solutions of the subproblems. 
Also in~\cite{LeThi2014}, DC~programs with DC~constraints that get complemented by a closed and convex set are considered. Referring to DCA1, penalizing the DC~constraints suitably carries over the DC~structure to the resulting penalty problems. This again allows to apply the strategy of the classical DCA, that is to  approximate within each subproblem the concave part of the objective function by an affine majorization. In addition, the penalty parameter gets updated in each iteration. Although the problem setting at the beginning is kept of a general nature, the convergence theory of DCA1 requires, in particular, differentiability of one of the DC~components of the penalized objective function.
Dealing with constrained DC~problems, one can, of course, always apply the classical DCA~\cite{LeThi1996} itself by formally rewriting the problem as an unconstrained one, which, however, is equivalent to carrying over the constraints into the subproblems.

Notwithstanding the method, keeping the constraints in the subproblems might make them hard to solve, depending on the application.
Hence, our aim has been to develop an algorithm for constrained DC~optimization which first, is able to handle a general nonsmooth DC~objective function and second, allows for augmentation of the probably most considered convex constraints, namely linear equality as well as convex inequality restrictions. For the last mentioned ones, also no smoothness assumptions are imposed. In addition, we wanted to provide the possibility of keeping somewhat easy constraints, for example box constraints, in the subproblems. Thus, we add an abstract constraint to our problem formulation. Moreover, we adopt the idea of the classical DCA~\cite{LeThi1996} to approximate the concave part of the objective function in each iteration by an affine majorization. Combining this approach with the basic concept of safeguarded augmented Lagrangian methods, which has some advantages against the classical augmented Lagrangian method (see~\cite{Kanzow2017} for a discussion), we yield convex subproblems, being, in general, simpler to solve than DC~problems. Note that quite many applications (with convex feasible region) involve only linear equality and convex inequality constraints but no further abstract ones. In that case we even may obtain unconstrained convex subproblems. (Though, there is still the possibility of keeping such constraints explicitly whenever they are easy to handle.)

The work is organized as follows. In \cref{sec: preliminaries} we first recall some basic concepts and results from nonsmooth convex analysis which will come into play when examining the properties of the subproblems occurring in our new algorithm before deriving some optimality conditions for the DC~problem under consideration. The new solution method itself gets then introduced and analyzed in \cref{sec: psALMDC}. We conduct in \cref{sec: numerics} some numerical experiments that compare the performance of the new algorithm to established solution methods by means of some applications stemming from location planning as well as compressed sensing. Afterwards, we close with some final remarks in \cref{sec: conclusion}.

\section{Preliminaries} \label{sec: preliminaries}
In this section we first recall some basic definitions and results from nonsmooth convex analysis before turning to the derivation of some optimality conditions for our problem under consideration.

\subsection{Basics from nonsmooth convex analysis}
In the following, we provide essential definitions and results which will be exploited later. For further reference, one can have a look, for example, at~\cite{Dhara2011,Lemarechal2001,Rockafellar1970,Schirotzek2007}.

An extended-valued function $\phi: \R^n \to \overline{\R}$ with $\overline{\R}:=\R \cup \{\infty\}$ is said to be \emph{proper} if its \emph{domain} $\dom(\phi):= \left\{ x \in \R^n \ \middle| \ \phi(x)<\infty \right\}$ is nonempty. It is referred to as \emph{lower semicontinuous (lsc)} if 
\[ \liminf_{x \to \overline{x}} \phi(x) \geq \phi \left(\overline{x}\right)\]
is satisfied for every $\overline{x} \in \R^n$.
Furthermore, a proper function is called \emph{convex} whenever
\[ \phi \left(\lambda x + (1-\lambda)y \right) \leq \lambda \phi(x)+(1-\lambda) \phi(y) \]
holds for all $x,y \in \R^n$ and $\lambda \in (0,1)$, and \emph{strongly convex} if there exists some $\mu > 0$ such that $\phi - \frac{\mu}{2} \Vert \cdot \Vert^2$ is convex. Here and throughout the paper, we denote by $\Vert \cdot \Vert$ the Euclidean norm. Real-valued strongly convex functions attain an unique minimum on a nonempty, closed and convex set. Furthermore, a proper convex function $\phi:\R^n \to \overline{\R}$ is directionally differentiable at any point $x\in \interior \left(\dom(\phi)\right)$ (with $\interior$ denoting the interior of a set) and in each direction $d \in \R^n$ with the corresponding \emph{directional derivative} given by
\[ \phi '(x;d):= \lim_{t \downarrow 0} \frac{\phi(x+td)-\phi(x)}{t}\]
(see e.g. Theorem~2.76 in~\cite{Dhara2011}).
Considering nonsmooth convex functions, we resort to the \emph{convex subdifferential} as a concept of generalized differentiation which, at a point $x \in \dom(\phi)$, is given by the set
\[ \partial \phi(x):=\left\{ s \in \R^n \ \middle| \ \phi(y) \geq \phi(x) +s^T(y-x) \ \forall y \in \R^n \right\}. \]
For every $x \in \interior \left(\dom(\phi)\right)$, the set $\partial \phi (x)$ is nonempty, convex and compact. Moreover, the directional derivative of such $\phi$ relates to the convex subdifferential via
\begin{equation} \label{eq: Subdiff_dirDerivative}
    \phi'(x;d)=\max_{s \in \partial \phi(x)} s^Td
\end{equation}
for each $x \in \interior \left(\dom (\phi)\right)$ and $d \in \R^n$ (see Theorem~23.4 in~\cite{Rockafellar1970}).
Besides, let us mention the following calculus rules for convex subdifferentials which will be used during our analysis.

\begin{theorem}[calculus rules for the convex subdifferential] \label{thm: cvxSubdiff_calculus}
    Let $\phi_1,\dots,\phi_r:\R^n \to \overline{\R}$ be proper and convex functions and $\alpha_1,\dots,\alpha_r>0$ positive constants. 
    \begin{enumerate}
        \item \label{thm: cvxSubdiff_sumRule} Suppose that either $\bigcap_{i=1}^r \ri \left[\dom \left(\phi_i\right)\right] \neq \emptyset$, where $\ri$ denotes the relative interior of a set, or there exists some $\widetilde{x}\in \bigcap_{i=1}^r \dom \left(\phi_i\right)$ such that all $\phi_i$ with at most one exception are continuous at $\widetilde{x}$. Then
            \[ \partial \left(\sum_{i=1}^r \alpha_i \phi_i\right) (x) = \sum_{i=1}^r \alpha_i \partial \phi_i(x) \]
            holds for all $x \in \dom \left(\sum_{i=1}^r \alpha_i \phi_i\right)$. (Moreau-Rockafellar sum rule, see e.g. Theorem~2.91 in~\cite{Dhara2011} and Theorem~4.5.1 in~\cite{Schirotzek2007}, respectively)

        \item Assume all functions $\phi_i, \ i=1,\dots,r,$ to be real-valued and let $ \Phi: \R^n \to \R, \  \Phi(x):= \linebreak \max \left\{ \phi_1(x),\dots, \phi_r(x)\right\}$. Then for every $x \in \R^n$ one has
            \[ \partial \Phi(x) = \conv \bigg\{\bigcup_{i \in I(x)} \partial \phi_i(x) \bigg\},\]
            where $\conv$ denotes the convex hull of a set and $I(x):=\left\{ i \in \{1,\dots, r\} \ \middle| \ \Phi(x)=\phi_i(x)\right\}$ the active index set. (see e.g. Theorem~2.96 in~\cite{Dhara2011})

        \item \label{thm: cvxSubdiffDifferentiable} In case $\phi_1$ is differentiable at some point $x \in \interior \left(\dom \left(\phi_1\right)\right)$ it holds
            \[ \partial \phi_1 (x) = \left\{\nabla \phi_1(x)\right\}.\]
            (see e.g. Proposition~2.80 in~\cite{Dhara2011})

        \item \label{thm: cvxSubdiffChainRule} Let $F: \R \to \R$ be continuously differentiable and assume $\phi_1$ to be real-valued, then for each $x \in \R^n$ one has
            \begin{equation} \label{eq: chainRule}
                \partial_C \left(F \circ \phi_1 \right)(x) = \nabla F \left(\phi_1(x)\right) \partial \phi_1(x),
            \end{equation}
            where $ \partial_C \left(F \circ \phi_1 \right)(x)$ denotes the Clarke subdifferential\footnote{It is known that the Clarke subdifferential coincides for convex functions with the convex subdifferential (see Proposition~2.2.7 in~\cite{Clarke1990}). While applying the proposed chain rule in this work it is always assured that also $F \circ \phi_1$ is a convex function. Hence, \eqref{eq: chainRule} then breaks down to pure convex analysis.} of $F \circ \phi_1$ at~$x$. (see Theorem~2.3.9(ii) in~\cite{Clarke1990})
    \end{enumerate} 
\end{theorem}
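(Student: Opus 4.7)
The plan is to verify each of the four calculus rules in turn, noting that all are classical and that the main technical difficulty is concentrated in the sum rule; the other three parts are comparatively short consequences of the directional-derivative formula \eqref{eq: Subdiff_dirDerivative} and the differentiability of the outer function.

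For part \ref{thm: cvxSubdiff_sumRule}, the inclusion $\sum_{i=1}^r \alpha_i \partial \phi_i(x) \subseteq \partial\bigl(\sum_{i=1}^r \alpha_i \phi_i\bigr)(x)$ is immediate: given $s_i \in \partial \phi_i(x)$, multiply each subgradient inequality by $\alpha_i>0$ and sum. The reverse inclusion is the main obstacle. The strategy I would follow is to argue via Fenchel conjugates, establishing that, under either of the two stated qualification conditions, the infimal convolution $\bigl(\sum_{i=1}^r \alpha_i \phi_i\bigr)^* = \square_{i=1}^r (\alpha_i \phi_i)^*$ is exact, i.e.\ the infimum defining it is attained. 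Combining this with the well-known characterization $s \in \partial \phi(x) \iff \phi(x) + \phi^*(s) = s^T x$ decomposes any subgradient of the sum into a sum of subgradients of the individual functions. The relative-interior assumption supplies exactness through the Fenchel-Moreau theorem, while the continuity assumption allows a standard separation of the epigraph of the sum from a point below, combined with the fact that a separating hyperplane cannot be vertical due to continuity of all but one function at $\widetilde{x}$.

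For part (ii), I would proceed through directional derivatives. Using continuity of the finitely many $\phi_i$ together with the definition of the directional derivative, one readily verifies $\Phi'(x;d)=\max_{i \in I(x)} \phi_i'(x;d)$. Applying \eqref{eq: Subdiff_dirDerivative} to each $\phi_i$ yields
\[
\Phi'(x;d) = \max_{i \in I(x)} \max_{s \in \partial \phi_i(x)} s^T d = \max_{s \in \bigcup_{i \in I(x)} \partial \phi_i(x)} s^T d = \max_{s \in \conv \bigcup_{i \in I(x)} \partial \phi_i(x)} s^T d,
\]
where the last equality uses that maximizing a linear functional over a set and over its convex hull give the same value. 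Since $\Phi$ is convex and the right-hand side equals the support function of a nonempty, convex and compact set, applying \eqref{eq: Subdiff_dirDerivative} once more to $\Phi$ together with the fact that a nonempty convex compact set is uniquely determined by its support function gives the claim.

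For part \ref{thm: cvxSubdiffDifferentiable}, one direction is clear because $\nabla \phi_1(x)$ satisfies the subgradient inequality at a differentiability point of a convex function. Conversely, any $s \in \partial \phi_1(x)$ satisfies $\phi_1'(x;d) \geq s^T d$ for every $d$, while differentiability forces $\phi_1'(x;d) = \nabla \phi_1(x)^T d$; choosing $d = \pm(s - \nabla \phi_1(x))$ yields $s = \nabla \phi_1(x)$. For part \ref{thm: cvxSubdiffChainRule}, I would invoke Clarke's chain rule for the composition of a $C^1$ function with a locally Lipschitz one (real-valued convex functions are locally Lipschitz), which directly delivers $\partial_C(F \circ \phi_1)(x) = \nabla F(\phi_1(x)) \partial_C \phi_1(x)$; the footnote then identifies $\partial_C \phi_1 = \partial \phi_1$. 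Overall, the single substantive obstacle is the exactness/CQ argument in \ref{thm: cvxSubdiff_sumRule}, and all four statements are in any case standard and admit the cited references as black boxes.
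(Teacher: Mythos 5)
The paper offers no proof of this theorem: each of the four parts is stated as a classical result with a citation (to Dhara--Dutta, Schirotzek, and Clarke), so there is no in-paper argument to compare against. Your sketches reproduce the standard proofs from exactly those sources --- the Fenchel-conjugate / exact-infimal-convolution route for the sum rule, the directional-derivative and support-function argument for the max rule, and the elementary arguments for parts (iii) and (iv) --- and they are correct in outline, with the only substantive work you defer (exactness of the infimal convolution under either qualification condition) being precisely the content of the cited Moreau--Rockafellar theorem.
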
 

Beyond that, the following two properties, known as local boundedness and closedness of the graph of the convex subdifferential, will be crucial for our convergence analysis.
\begin{theorem} \label{thm: cvxSubdiff_localBoundedClosed}
    Let $\phi:\R^n \to \overline{ \R}$ be proper, lsc and convex. Then, the following holds: 
    \begin{enumerate}
        \item \label{thm: cvxSubdiffLocalBounded} For a nonempty and compact subset $K \subseteq \interior \left(\dom (\phi)\right)$ the image set 
            \[ \partial \phi (K):= \bigcup_{x \in K} \partial \phi(x) \]
            is nonempty and compact. (see Proposition~2.85 in~\cite{Dhara2011})
        \item \label{thm: cvxSubdiffClosedGraph} For any sequence $\left\{ \left(x^k,s^k\right) \right\}_{k \in \N} \subseteq \R^n \times \R^n$ with $s^k \in \partial \phi \left( x^k\right)$ for all $k \in \N$ such that $\left(x^k,s^k\right)$ converges to some $(x,s) \in \R^n \times \R^n$,  $s \in \partial \phi(x)$ follows. (see Theorem~2.84 in~\cite{Dhara2011})
    \end{enumerate}
\end{theorem}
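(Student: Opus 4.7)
The plan is to prove part (ii) first, directly from the defining subgradient inequality, and then bootstrap part (i) using (ii) together with the classical local Lipschitz property of convex functions on $\interior(\dom(\phi))$.

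For part (ii), the idea is to fix an arbitrary test point $y \in \R^n$ and apply $\liminf_{k\to\infty}$ to the subgradient inequality
\[ \phi(y) \geq \phi \left(x^k\right) + \left(s^k\right)^T \left(y-x^k\right), \]
which holds for every $k$ by the hypothesis $s^k \in \partial \phi \left(x^k\right)$. Continuity of the inner product gives $\left(s^k\right)^T \left(y-x^k\right) \to s^T(y-x)$, while lower semicontinuity yields $\liminf_k \phi \left(x^k\right) \geq \phi(x)$. Using that $\liminf (a_k+b_k) \geq \liminf a_k + \lim b_k$ whenever $\{b_k\}$ converges, one obtains $\phi(y) \geq \phi(x)+s^T(y-x)$; since $y$ was arbitrary, $s \in \partial\phi(x)$ as required.

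For part (i), nonemptiness is immediate, since $\partial\phi(x)\neq\emptyset$ at every $x \in \interior(\dom(\phi))$ for a proper convex function. For boundedness I would invoke the classical fact that a proper convex function is locally Lipschitz on $\interior(\dom(\phi))$: as $K$ is compact and contained in the open set $\interior(\dom(\phi))$, one can pick $\delta > 0$ such that the closed $\delta$-enlargement $K_\delta:=K+\overline{B}_\delta(0)$ still lies in $\interior(\dom(\phi))$, and $\phi$ is Lipschitz on $K_\delta$ with some constant $L>0$. Then for any $x \in K$ and any $s \in \partial\phi(x)$ with $s \neq 0$, inserting $y:= x+\delta s/\|s\| \in K_\delta$ into the subgradient inequality gives
\[ \delta \|s\| \leq \phi(y)-\phi(x) \leq L \|y-x\| = L\delta, \]
hence $\|s\| \leq L$ uniformly over $K$. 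Closedness follows from (ii): any convergent sequence $\{s^k\} \subseteq \partial\phi(K)$ with $s^k \to s$ comes with base points $x^k \in K$, and by compactness of $K$ a subsequence satisfies $x^k \to x \in K$; part (ii) then yields $s \in \partial\phi(x) \subseteq \partial\phi(K)$. Combined with boundedness this gives compactness.

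The main obstacle is really the supporting fact that proper convex functions are locally Lipschitz on the interior of their domain; everything else in the argument is a straightforward passage to the limit. That supporting fact is standard but itself nontrivial, typically established by first obtaining local boundedness of $\phi$ near interior points (bounding $\phi$ on an enclosing simplex by the finite maximum over its vertices) and then turning local boundedness into Lipschitz continuity via a geometric comparison. Since these are available as cited results from \cite{Dhara2011,Rockafellar1970}, in practice the theorem reduces to a compact and self-contained verification along the lines above.
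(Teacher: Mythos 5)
The paper offers no proof of this theorem; it is imported verbatim from the literature with citations to Proposition~2.85 and Theorem~2.84 of \cite{Dhara2011}. Your argument is correct and is essentially the standard textbook proof of those two facts: part~(ii) follows by passing to the limit in the subgradient inequality, where properness together with lower semicontinuity guarantees $\liminf_k \phi\left(x^k\right) \geq \phi(x) > -\infty$, so the splitting of the $\liminf$ is legitimate; and part~(i) combines nonemptiness of the subdifferential at interior points, the uniform bound $\Vert s \Vert \leq L$ obtained from the Lipschitz constant on a compact $\delta$-enlargement of $K$, and the closedness supplied by part~(ii) via compactness of $K$. The only external ingredient is the local Lipschitz continuity of a proper convex function on the interior of its domain, which you correctly flag as the one nontrivial supporting fact and which is available in the cited sources, so the proposal is complete.
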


Given a nonempty set $C \subseteq \R^n$, the \emph{tangent cone} of $C$ at $x\in C$ is given by
\[ T_C(x):=\Limsup_{t \downarrow 0} \frac{C-x}{t}, \]
where for a set-valued map $ F: \R^n \rightrightarrows \R^m$   
\[\Limsup_{x \to \overline{x}} F(x):= \left\{y \in \R^m \ \middle| \ \exists x^k \rightarrow \overline{x}, y^k \rightarrow y  \text{ such that } y^k \in F\left(x^k\right) \ \forall k \in \N \right\}\]
describes the \emph{Painlev\'{e}-Kuratowski outer/upper limit} of $F$ at $\overline{x}$ provided that $F\left(\overline{x}\right)$ is nonempty.
Assuming, in addition, $C$ to be convex, $T_C \left(x\right)$ becomes a closed and convex cone (see Theorem~2.35 in~\cite{Dhara2011}). Furthermore, for a convex set $C \subseteq \R^n$, the
\emph{normal cone} of $C$ in $x \in C$ is defined by
\[ N_C(x):= \left\{ s \in \R^n \ \middle| \ s^T(y-x) \leq 0 \ \forall y \in C\right\}.\] 
Moreover, for convex $C$, tangent and normal cone are polar to each other, that means
\begin{equation} \label{eq: NormalToTangentCone}
    N_C(x)= \left(T_C(x)\right)^{\circ} \quad \text{and} \quad T_C(x)=\left(N_C(x)\right)^{\circ} \quad \forall x \in C,
\end{equation}
where for an arbitrary set $K \subseteq \R^n$, the expression $K^{\circ}$ denotes the corresponding \emph{polar cone} defined by
\[K^{\circ}:=\left\{ y \in \R^n \ \middle| \ y^Tx \leq 0 \ \forall x \in K\right\}.\]
(see Proposition~2.37 in~\cite{Dhara2011})
In addition, for convex $C \subseteq \R^n$, one has 
\begin{equation} \label{eq: normalCone_indicator}
    N_C(x)= \partial \delta_C(x)
\end{equation}
for any $x \in C$, where $ \linebreak[2] \delta_C: \nolinebreak \R^n \to \nolinebreak \overline{\R}$ denotes the \emph{indicator function} of the set $C$ (see page~89 in~\cite{Dhara2011}).
This relation together with the closedness of the graph of the convex subdifferential reveals that for closed $C$ the normal cone is robust in the sense that
\begin{equation} \label{eq: normalCone_Robustness}
    N_C\left(\overline{x}\right)= \Limsup_{x \xrightarrow{C} \overline{x}} N_C(x)
\end{equation}
holds for all $\overline{x} \in C$, where we indicate with $x \xrightarrow{C} \overline{x}$ that only sequences contained in~$C$ and converging to $\overline{x}$ are taken into account.

Finally, consider the optimization problem
\begin{equation} \label{eq: convexOptimization}
    \min_{x \in \R^n} \phi(x) \quad \text{s.t.} \quad x \in C
\end{equation}
with a convex function $\phi:\R^n \to \R$ and a convex set $C\subseteq \R^n$.
\begin{theorem}[optimality condition for convex optimization, cf. Theorem~3.1 in~\cite{Dhara2011}] \label{thm: cvx_OptCond}
    A point $x \in \R^n$ is a solution to the convex optimization problem~\eqref{eq: convexOptimization} if and only if
    \[ 0 \in \partial \phi(x)+N_C(x).\]
\end{theorem}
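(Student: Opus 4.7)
The plan is to recast the constrained problem~\eqref{eq: convexOptimization} as an unconstrained one by absorbing the feasible set~$C$ into the objective via the indicator function~$\delta_C$. Setting $\Phi := \phi + \delta_C$, one sees that $x \in \R^n$ solves~\eqref{eq: convexOptimization} if and only if it is a global minimizer of~$\Phi$ on all of~$\R^n$, because $\Phi$ agrees with $\phi$ on $C$ and equals $+\infty$ off $C$; moreover $\Phi$ is proper since $\phi$ is real-valued and $C$ is nonempty.

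Next I would invoke the elementary Fermat-type characterization for convex functions: a proper convex $\Phi$ attains its global minimum at $x$ precisely when $0 \in \partial \Phi(x)$. This is immediate from the very definition of the convex subdifferential, since taking $s=0$ in the subgradient inequality $\Phi(y) \geq \Phi(x) + s^{T}(y-x)$ yields $\Phi(y) \geq \Phi(x)$ for every $y \in \R^n$, and conversely any global minimizer trivially admits $0$ as a subgradient. Consequently, optimality of~$x$ in~\eqref{eq: convexOptimization} is equivalent to $0 \in \partial(\phi+\delta_C)(x)$.

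It then remains to split this subdifferential. I would apply the Moreau--Rockafellar sum rule from \cref{thm: cvxSubdiff_calculus}\ref{thm: cvxSubdiff_sumRule} to the pair $(\phi, \delta_C)$. The qualification hypothesis is easy to verify here: $\phi$ is real-valued on $\R^n$ and therefore continuous everywhere, so picking any $\widetilde{x} \in C = \dom(\delta_C)$ puts us in the second alternative of the sum rule, with $\delta_C$ the (possibly) discontinuous summand and $\phi$ the continuous one. This yields $\partial(\phi+\delta_C)(x) = \partial \phi(x) + \partial \delta_C(x)$, and identity~\eqref{eq: normalCone_indicator} finally replaces $\partial \delta_C(x)$ by $N_C(x)$, closing the chain of equivalences.

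I expect no substantive obstacle, as the entire argument is a textbook assembly of tools already collected above; the only mildly delicate point is checking the constraint qualification for the sum rule, which reduces at once to the observation that finite convex functions on $\R^n$ are automatically continuous.
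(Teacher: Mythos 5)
Your proof is correct. The paper does not actually prove this statement---it is quoted as a known result with a citation to Theorem~3.1 in~\cite{Dhara2011}---and your argument (reduce to unconstrained minimization of $\phi+\delta_C$, apply the Fermat rule $0\in\partial\Phi(x)$, split the subdifferential via the Moreau--Rockafellar sum rule using that a finite convex function on $\R^n$ is continuous, and identify $\partial\delta_C(x)$ with $N_C(x)$ via~\eqref{eq: normalCone_indicator}) is precisely the standard textbook derivation, with the constraint-qualification check handled correctly.
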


Let us close with stating a separation theorem being an important tool for proving equivalence of certain constraint qualifications later on.
\begin{theorem}[separation theorem, cf. Theorem~2.26(ii) in~\cite{Dhara2011}] \label{thm: separationTheorem}
    Let $C_1,C_2 \subseteq \R^n$ be nonempty, convex and disjoint sets. Then there exists some $ \linebreak[2] a \in \nolinebreak \R^n \setminus \nolinebreak \{0\}$ with
    \[ a^T x_1 \leq a^T x_2 \qquad \forall x_1 \in C_1, \ x_2 \in C_2.\]
\end{theorem}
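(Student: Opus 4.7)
The plan is to reduce the two-set separation problem to separating the origin from a single nonempty convex set. Define $C := C_2 - C_1 = \{x_2 - x_1 \mid x_1 \in C_1, \ x_2 \in C_2\}$. This set is nonempty, convex as a Minkowski sum of convex sets, and the disjointness hypothesis $C_1 \cap C_2 = \emptyset$ translates precisely into $0 \notin C$. Any vector $a \in \R^n \setminus \{0\}$ satisfying $a^T c \geq 0$ for every $c \in C$ then rearranges into the required inequality $a^T x_1 \leq a^T x_2$ for all $x_1 \in C_1, x_2 \in C_2$. Hence it suffices to prove the following one-set statement: for every nonempty convex $C \subseteq \R^n$ with $0 \notin C$ there exists $a \in \R^n \setminus \{0\}$ with $a^T c \geq 0$ on $C$.

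To carry this out I would distinguish two cases based on whether the origin belongs to the closure $\overline C$. The simpler case is $0 \notin \overline C$: the projection $\bar c := P_{\overline C}(0)$ is well-defined and nonzero because $\overline C$ is nonempty, closed and convex. Choosing $a := \bar c$, the variational characterization of the projection gives $(c - \bar c)^T \bar c \geq 0$ for every $c \in \overline C$, which rearranges to $a^T c \geq \|a\|^2 > 0$, strictly more than required.

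The remaining case $0 \in \overline C \setminus C$ places $0$ on the boundary of $\overline C$ and is more delicate. My plan is an approximation argument: pick a sequence $\{y^k\} \subseteq \R^n \setminus \overline C$ with $y^k \to 0$, apply the projection argument to the translates $\overline C - y^k$ to obtain unit vectors $a^k$ with $(a^k)^T(c - y^k) \geq 0$ for all $c \in \overline C$, and extract a convergent subsequence $a^k \to a$ on the compact unit sphere; passing to the limit yields $a^T c \geq 0$ for every $c \in \overline C \supseteq C$ with $\|a\| = 1$. I expect the main technical obstacle to be the existence of such an approximating sequence when $\overline C$ has empty topological interior in $\R^n$, because then $0$ cannot be approached from outside $\overline C$. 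That degenerate situation has to be handled separately and is in fact easier: $0 \in \overline C$ combined with empty interior forces the affine hull of $\overline C$ to be a proper linear subspace $H \subsetneq \R^n$ (it contains $0$ since $0 \in \overline C \subseteq H$), so any nonzero vector orthogonal to $H$ satisfies $a^T c = 0 \geq 0$ for every $c \in \overline C$ and provides the desired separator.
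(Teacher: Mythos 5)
The paper does not prove this statement at all: it is imported verbatim as Theorem~2.26(ii) of the cited reference (Dhara--Dutta), so there is no in-paper argument to compare against. Your proposal is the standard textbook proof (reduction to the difference set $C=C_2-C_1$, then projection onto the closure plus an approximation from outside for boundary points), and its overall architecture is correct: the reduction is valid, the case $0\notin\overline C$ is handled correctly by the variational characterization of the projection, and the limiting argument with unit normals $a^k$ on the compact sphere is sound.

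One point needs tightening, and you have the topology exactly backwards there. You write that when $\overline C$ has empty interior, ``$0$ cannot be approached from outside $\overline C$'' --- in fact the opposite holds: a closed set with empty interior is nowhere dense, so its complement is dense and every point of $\overline C$ is a limit of points outside $\overline C$; your separate affine-hull argument for that case is therefore correct but unnecessary. The case you actually need to rule out is $0\in\interior\left(\overline C\right)$, since that is precisely when no approximating sequence $y^k\to 0$ with $y^k\notin\overline C$ exists. You assert without justification that $0\in\overline C\setminus C$ ``places $0$ on the boundary of $\overline C$''; for a general set this is false (take $C$ a ball with its center removed), so convexity must be invoked. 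The required fact is that for convex $C$ one has $\interior\left(\overline C\right)=\interior(C)\subseteq C$ (a consequence of $\ri\left(\overline C\right)=\ri(C)$, e.g.\ Rockafellar, Theorem~6.3), whence $0\notin C$ forces $0\notin\interior\left(\overline C\right)$. With that one line added, the proof is complete.
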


\subsection{Optimality conditions}
Here, we derive necessary optimality conditions for problem~\eqref{eq: problemFormulation} following primarily \cite{Pang2017} and then extending the approach of~\cite{Dhara2011}. 
To this end, let us write
\begin{equation} \label{eq: equalityConstraints}
    \ell : \R^n \rightarrow \R^p, \qquad \ell (x) := Ax-b
\end{equation}
for the equality constraints of~\eqref{eq: problemFormulation}, and
\begin{equation} \label{eq: feasibleSet}
    X:= C \cap c^{-1} \left( \{ (-\infty,0]^m \} \right) \cap \ell^{-1} \left( \{0\} \right)
\end{equation}
for the feasible set of~\eqref{eq: problemFormulation}.

Now, assume $x^\ast \in \R^n$ to be an arbitrary local minimum to~\eqref{eq: problemFormulation}. It is well known that $x^\ast$ satisfies
\[ f'\left(x^\ast;d\right)\geq 0 \qquad \forall d \in T_X \left(x^\ast \right)\]
or, equivalently, by exploiting convexity of the feasible set $X$,
\[ f'\left(x^\ast;x-x^\ast \right) \geq 0 \qquad \forall x \in X \]
(see e.g.~\cite{Pang2021}). 
Any point meeting this last condition is called a directional stationary point or, for short, \emph{d-stationary point} of~\eqref{eq: problemFormulation}.
Taking into account the DC~structure of the objective function, d-stationarity can be rewritten as
\[ g'\left(x^\ast;x-x^\ast \right) \geq h' \left( x^\ast;x-x^\ast \right) \qquad \forall x \in X. \]
Now, utilizing the relation~\eqref{eq: Subdiff_dirDerivative} leads to
\[ g'\left(x^\ast;x-x^\ast \right) \geq s^T\left(x-x^\ast\right) \qquad \forall x \in X, \ s \in \partial h \left( x^\ast \right).\]
Since d-stationarity is both necessary and sufficient for optimality in the convex setting (see e.g. again~\cite{Pang2021}), it follows
\[x^\ast \in \argmin_{x \in X} \left\{ g(x)-s^Tx\right\} \qquad \forall s \in \partial h \left(x^\ast \right)\]
which, in turn, due to the optimality condition from \cref{thm: cvx_OptCond}, is equivalent to 
\[ 0 \in \partial g\left( x^\ast \right)-\{s\} + N_X \left( x^\ast \right) \qquad \forall s \in \partial h \left(x^\ast \right)\]
proving
\begin{equation} \label{eq: dStationarity_DC}
    \partial h \left( x^\ast \right) \subseteq \partial g \left( x^\ast \right) + N_X \left( x^\ast \right).
\end{equation}
Note that the above considerations show that the concept of a d-stationary point is equivalent to the inclusion~\eqref{eq: dStationarity_DC}. However, in practice it is usually quite difficult to verify d-stationarity. Hence, one often resorts to the weaker notion of a \emph{critical point} of~\eqref{eq: problemFormulation}, that is a feasible point satisfying
\begin{equation} \label{eq: criticality}
    \partial h \left( x^\ast \right) \cap \left( \partial g \left( x^\ast \right) + N_X \left( x^\ast \right) \right) \neq \emptyset.
\end{equation}
Clearly, with the above explanations in mind, this still poses a necessary optimality condition for~\eqref{eq: problemFormulation}.
In order to obtain a suitable expression for the normal cone $N_X \left( x^\ast \right)$ which exploits the special structure of the convex set $X$, one can apply the following Slater-type constraint qualification.

\begin{definition}[StCQ] \label{def: SCQ}
    We say that the \emph{Slater-type Constraint Qualification} (StCQ) holds for~\eqref{eq: problemFormulation} whenever there exists some $\widetilde{x} \in \ri (C) $ such that $c_i \left( \widetilde{x} \right) < 0 $ holds for all $i=1,\dots,m$ as well as $A \widetilde{x}=b$. 
\end{definition}

With this constraint qualification at hand one can derive the following alternative optimality condition.

\begin{proposition} \label{prop: optCondDC_generalizedKKT}
    Assume that StCQ is satisfied for~\eqref{eq: problemFormulation}. Then $x^\ast \in \R^n$ is a critical point of~\eqref{eq: problemFormulation} if and only if there exist $\lambda^\ast \in \R^m, \ \mu^\ast \in \R^p$ such that
    \begin{subequations} \label{eq: KKT}
        \begin{align}
            & \partial h \left( x^\ast \right) \cap \bigg( \partial g \left( x^\ast \right)  + \left\{ A^T \mu^{\ast} \right\} + \sum_{i=1}^m \lambda^{\ast}_i \partial c_i \left( x^\ast \right) + N_C \left( x^\ast \right)\bigg) \neq \emptyset, \label{eqSys: KKT_optimality} \\
            & c_i\left(x^\ast\right) \leq 0, \quad \lambda_i^\ast \geq 0,  \quad \lambda^{\ast}_i c_i \left( x^\ast \right) = 0 \quad \forall i=1,\dots, m, \label{eqSys: KKT_complementary}\\
            & A x^\ast=b, \quad x^\ast \in C.
        \end{align}
    \end{subequations} 
\end{proposition}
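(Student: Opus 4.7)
The plan is to reduce the claimed equivalence to a standard convex Slater-type KKT theorem applied to a single auxiliary convex problem. By \cref{thm: cvx_OptCond} applied with the convex function $g(\cdot) - \xi^T \cdot$ over the convex set $X$, the criticality condition~\eqref{eq: criticality} is equivalent to the existence of some $\xi \in \partial h(x^\ast)$ for which $x^\ast$ is a minimizer of the convex program
\[ \min_{x \in X} g(x) - \xi^T x. \]
Accordingly, it suffices to show that, under StCQ, this auxiliary problem admits KKT multipliers $(\lambda^\ast, \mu^\ast)$ producing~\eqref{eq: KKT}, and, conversely, that any such multipliers force the criticality inclusion.

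For the forward direction, I would first derive a decomposition of $N_X(x^\ast)$. Writing $X = C \cap C_{\text{ineq}} \cap \ell^{-1}(\{0\})$ with $C_{\text{ineq}} := \{x \in \R^n : c_i(x) \leq 0 \ \forall i\}$, I apply the Moreau-Rockafellar sum rule from \cref{thm: cvxSubdiff_calculus} to the sum of indicator functions $\delta_C + \delta_{C_{\text{ineq}}} + \delta_{\ell^{-1}(\{0\})}$, invoking~\eqref{eq: normalCone_indicator}. The relative-interior hypothesis is supplied by StCQ: continuity of each $c_i$ places the point $\widetilde{x}$ in the (relative) interior of $C_{\text{ineq}}$, while the affine set $\ell^{-1}(\{0\})$ coincides with its relative interior. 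This yields
\[ N_X(x^\ast) = N_C(x^\ast) + N_{C_{\text{ineq}}}(x^\ast) + \operatorname{range}(A^T). \]
A second application of the sum rule, realized by rewriting $C_{\text{ineq}}$ as the sublevel set $\{\max_i c_i \leq 0\}$ and combining the max rule from \cref{thm: cvxSubdiff_calculus} with a separation argument based on \cref{thm: separationTheorem}, then delivers the classical Slater formula
\[ N_{C_{\text{ineq}}}(x^\ast) = \bigg\{ \sum_{i=1}^m \lambda_i s_i \ \bigg| \ \lambda_i \geq 0, \ s_i \in \partial c_i(x^\ast), \ \lambda_i c_i(x^\ast) = 0 \bigg\}. \]
Substituting these formulas into $\xi \in \partial g(x^\ast) + N_X(x^\ast)$ then extracts the multipliers $\mu^\ast$ and $\lambda^\ast$ with the required complementarity.

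The reverse direction will be a short direct verification: for multipliers as in~\eqref{eq: KKT} and any $y \in X$, the vector $A^T \mu^\ast + \sum_i \lambda_i^\ast s_i + w$ (with $s_i \in \partial c_i(x^\ast)$ and $w \in N_C(x^\ast)$ taken from the KKT witness) is shown to lie in $N_X(x^\ast)$ by testing against $y - x^\ast$: one uses $A(y-x^\ast) = 0$, the subgradient inequality combined with $c_i(y) \leq 0$ and the complementarity $\lambda_i^\ast c_i(x^\ast) = 0$, and finally $w \in N_C(x^\ast)$ together with $y \in C$. I expect the main obstacle to lie in the first direction, specifically in the careful derivation of the Slater-type normal-cone formula for $C_{\text{ineq}}$ in the fully nonsmooth setting, since this is precisely where StCQ enters essentially---through the separation theorem \cref{thm: separationTheorem}---and where the convex calculus of \cref{thm: cvxSubdiff_calculus} must be applied with care to simultaneously produce nonnegative multipliers and enforce the complementarity~\eqref{eqSys: KKT_complementary}.
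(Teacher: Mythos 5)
Your proposal is correct and follows essentially the same route as the paper: both reduce criticality to the inclusion $\partial h(x^\ast)\cap\bigl(\partial g(x^\ast)+N_X(x^\ast)\bigr)\neq\emptyset$ and then decompose $N_X(x^\ast)$ under StCQ into $N_C(x^\ast)$, the Slater-type representation of the normal cone to the inequality-constraint set, and $\operatorname{range}(A^T)$. The only difference is that the paper simply cites these normal-cone formulas (Proposition~3.3 and Theorem~3.12 in~\cite{Dhara2011}), whereas you sketch their derivation via the Moreau--Rockafellar sum rule, the max rule, and a separation argument.
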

\begin{proof}
    This immediately follows from the definition of a critical point,~\eqref{eq: criticality}, as under StCQ it holds for all feasible points $x \in X$
    \[ N_X(x) = N_{ c^{-1} \left( \left\{ (-\infty,0]^m \right\} \right)}(x)+N_{\ell^{-1} \{0\}}(x)+N_C(x) \]
    with
    \begin{align*}
        N_{c^{-1} \left( \left\{ (-\infty,0]^m \right\} \right)} (x)&= \left\{ \sum_{i=1}^m \lambda_i \omega_i \in \R^n \ \middle| \ \omega_i \in \partial c_i(x), \  \lambda_i \geq 0, \ \lambda_i c_i(x)=0, \ i=1,\dots,m \right\},\\
        N_{\ell^{-1} \left( \{0\} \right)}(x) &=  \left\{ A^T \mu \in \R^n \ \middle| \ \mu \in \R^p \right\} 
    \end{align*}
    \enlargethispage{1\baselineskip} 
    (see Proposition~3.3 and Theorem~3.12 in~\cite{Dhara2011}).
\end{proof}

Any triple $\left( x^\ast, \lambda^\ast, \mu^\ast \right) \in \R^n \times \R^m \times \R^p$ meeting the conditions~\eqref{eq: KKT} is called a \emph{generalized Karush-Kuhn-Tucker (KKT)~point} of~\eqref{eq: problemFormulation}. Thereby, the requirement $ \lambda_i^\ast c_i \left(x^\ast \right) =0  $ for all $i=1,\dots,m$ is referred to as \emph{complementary slackness}. Further, observe that~\eqref{eqSys: KKT_complementary} can be equivalently expressed as $\min \left\{-c\left(x^\ast \right), \lambda^\ast \right\} = 0$, where here and in the following $\min\{y,z\}$ for two vectors $y,z \in \R^m$ (and the analog holds true for $\max \{y,z\}$) is computed componentwise, meaning
\[ \left(\min\{y,z\}\right)_i:= \min \left\{y_i,z_i\right\} \qquad i=1,\dots,m.\]
Let us note that the term of a generalized KKT~point is not used consistently in the literature. In~\cite{Pang2017} it coincides with that of a d-stationary point (see~\eqref{eq: dStationarity_DC}) although no Lagrange multiplier appears therein. However, in \cite{LeThi2014,PhamDinh2014} they call $x^\ast \in \R^n$ a KKT~point of~\eqref{eq: problemFormulation} whenever there exist some $\lambda^\ast \in \R^m, \ \mu^\ast \in \R^p$ with
\begin{align*}
    &0 \in \partial_C f \left( x^\ast \right)+ \left\{A^T \mu^\ast \right\} + \sum_{i=1}^m \lambda_i^\ast \partial c_i \left(x^\ast\right) +N_C \left(x^\ast\right),\\
    &c_i\left(x^\ast \right) \leq 0, \quad \lambda_i^\ast \geq 0,  \quad \lambda^{\ast}_i c_i \left( x^\ast \right) = 0 \quad \forall i=1,\dots, m, \\
    & A x^\ast=b, \quad x^\ast \in C
\end{align*}
where $\partial_C f \left( x^\ast \right)$ denotes again the Clarke subdifferential~\cite{Clarke1990,Pang2021} of the objective function~$f$ at~$x^\ast$. Looking into the Clarke subdifferential it is apparent that the notion of a KKT~point given by~\cite{LeThi2014,PhamDinh2014} is stronger than the one used in this work. Actually, this comes from exploiting the DC~structure and explains our notion of a \emph{generalized} KKT~point.

\section{The algorithm and its convergence properties} \label{sec: psALMDC}

Our approach is based on the classical safeguarded augmented Lagrangian method (see e.g. \cite{Kanzow2017,Birgin2012,Birgin2014}). But instead of applying the technique directly to our problem~\eqref{eq: problemFormulation}, we first replace the concave part $-h$ of the objective function by a linearization. 
This way, we avoid a DC~structure in our subproblems but obtain convex optimization problems instead, which, in general, reduces the computational effort of solving the subproblems.
In order to approximate $-h$, we follow  the classical DC~Algorithm~\cite{LeThi1996} from unconstrained DC~optimization which means, at iteration $k \in \N_0$ with current iterate $x^k$, we pick a subgradient $s^k \in \partial h \left(x^k\right)$ and replace $-h$ by its affine majorant
\[ x \mapsto - h \left(x^k\right) -\left(s^{k}\right)^T \left(x-x^k\right). \]
In addition, we only penalize the equality and inequality constraints in~\eqref{eq: problemFormulation}, while keeping the abstract constraint $x \in C$ explicitly in our subproblems. Hence, after approximating the objective function and augmenting only the equality and inequality constraints, the augmented Lagrangian function in each iteration $k$ reads
\begin{equation*}
    \begin{multlined}
        L_{\rho}^{(k)} (x, \lambda, \mu):=  g(x)-h\left(x^k\right) - \left(s^k \right)^T \left(x-x^k\right)+ \mu^T (Ax-b) + \frac{\rho}{2} \Vert Ax-b \Vert^2 \\
        + \frac{1}{2\rho} \left( \| \max \{0, \lambda + \rho c(x) \} \|^2 - \Vert \lambda \Vert^2 \right),
    \end{multlined}
\end{equation*}
which means the augmented Lagrangian function changes from iteration to iteration. Hence, we call this an adaptive (safeguarded) augmented Lagrangian approach. 
Adding a proximal term in the subproblems results in the following proximal safeguarded augmented Lagrangian method for DC~problems, psALMDC for short.
\renewcommand{\labelenumi}{\theenumi}

\begin{alg}{(psALMDC)} \label{alg: sALMDC}
\item \label{alg: initialisation} Let $ \left\{Q_k\right\}_{k \in \N_0} \subseteq \R^{n \times n}$ be a sequence of symmetric positive definite matrices satisfying $\linebreak \underline{\lambda}\leq \nolinebreak \lambda_{\min} \left(Q_k \right) \leq \lambda_{\max} \left(Q_k\right) \leq \overline{\lambda}$ for all $k \in \N_0$ and some $\underline{\lambda},\overline{\lambda}>0$.
    Choose $\left(x^0,v^0,u^0\right) \in \R^n\times \R^p \times \nolinebreak \R^m_+ , \linebreak[3] \ \sigma_0,\epsilon_0>0, \ M,N \in \N \ (N<\nolinebreak M), \ \alpha,\beta,\gamma,\theta \in (0,1), \linebreak[2] \ \overline{\eta}>\nolinebreak 1$. Set $\rho_0:= \left( \sigma_0\right)^\gamma, \ K:=I_1:=0, \ k:=0$.
\item \label{alg: subprob} Select $s^k \in \partial h\left(x^k \right)$. Compute 
    \[x^{k+1} = \argmin_{x \in C} \left\{ L_{\rho_k}^{(k)} \left( x,u^k,v^k\right) + \frac{\sigma_k}{2}\| x-x^k \|_{Q_k}^2\right\}. \]
\item \label{alg: LagrangeMultiplier} Set 
    \begin{align*}
        \mu^{k+1}&:=v^k+\rho_k \left(Ax^{k+1}-b\right), \\
        \lambda^{k+1}&:= \max \left\{ 0, u^k+\rho_k c\left(x^{k+1} \right) \right\}.
    \end{align*}

\item \label{alg: termination} IF $\sigma_k Q_k \left( x^{k+1}-x^k \right)=0$ AND $Ax^{k+1}=b$ AND $\min \left\{-c \left(x^{k+1}\right), \lambda^{k+1} \right\}=0$: STOP.

\item \label{alg: feasibilityCheck}
    \begin{enumerate}
        \item \label{alg: feasibilityCheck_succ}
                    IF $ \| Ax^{k+1}-b \| \leq \theta \| Ax^k-b \|$ AND 
                    $\Big\| \min \left\{ -c \left(x^{k+1} \right), \ \frac{u^k}{\rho_k} \right\} \Big\| \leq \theta \Big\| \min \left\{ -c \left(x^{k} \right), \ \frac{u^{k-1}}{\rho_{k-1}} \right\} \Big\|$
                set 
                \[\sigma_{k+1}:=\sigma_k, \ \rho_{k+1}:= \rho_k, \ \epsilon_{k+1}:=\epsilon_k, \]

            \item \label{alg: feasibilityCheck_unsucc} ELSE set
                \begin{align*}
                    \eta &:= \begin{cases}
                        \overline{\eta} & \text{if } \| x^{k+1}-x^k \|^{\alpha}\geq \epsilon_k, \\
                        1 & \text{otherwise,}
                    \end{cases}\\
                    \sigma_{k+1}&:= \begin{cases}
                        \overline{\eta} \sigma_k & \text{if } x^{k+1}=x^k, \\
                        \max \left\{ \frac{1}{\| x^{k+1}-x^k \|^{\alpha}},\eta\sigma_k \right\} & \text{otherwise},
                    \end{cases}\\
                    \rho_{k+1}&:= \left( \sigma_{k+1}\right)^{\gamma}.
                \end{align*}
                \begin{tabbing}
                    \hspace{0.5cm}\= IF \= $ \| x^{k+1}-x^k \|^{\alpha} < \epsilon_k$\\
                    \>\>$ K \leftarrow K+1.$\\
                    \> IF $ \sigma_{k+1} = \frac{1}{\| x^{k+1}-x^k \|^{\alpha}}$\\
                    \>\>$ I_1 \leftarrow I_1+1.$\\
                    \> IF $ K \geq M$ AND $ I_1 \geq N$ set\\
                    \>\> $\epsilon_{k+1}:= \beta \epsilon_k$,\\
                    \>\> $K  \leftarrow 0$,\\
                    \>\> $I_1 \leftarrow 0$,\\
                    \> ELSE set\\
                    \>\>$ \epsilon_{k+1}:=\epsilon_k.$
                \end{tabbing}
        \end{enumerate}
    \item \label{alg: auxiliarySequences} Set 
        \begin{align*}
            v^{k+1}&:= \begin{cases}
                v^k & \text{if } Ax^{k+1}=b,\\
                \frac{\left(v^k\right)^T\left(Ax^{k+1}-b\right)}{\| Ax^{k+1}-b \|^2} \left(Ax^{k+1}-b\right) & \text{otherwise},
            \end{cases}\\
            I&:= \left\{ i \in \{1,\dots,m\} \ \middle| \ c_i\left(x^{k+1}\right)>0\right\}, \\
            u_i^{k+1}&:= \begin{cases}
                \frac{\left(u_I^k\right)^Tc_I \left(x^{k+1}\right)}{\| c_I \left(x^{k+1} \right) \|^2}c_i \left(x^{k+1} \right) & \text{if } i \in I, \\
                u_i^k & \text{if } i \in \{1,\dots,m\} \setminus I.
            \end{cases}
        \end{align*}

    \item Set $k \leftarrow k+1$, and go to \ref{alg: subprob}. 
\end{alg}

Before dedicating ourselves to the convergence theory, let us give some more insights in the motivation of the algorithm. First, choosing the next iterate $x^{k+1}$ in \ref{alg: subprob} as a minimizer of the modified augmented Lagrangian function $L_{\rho_k}^{(k)} \left( \cdot, u^k,v^k \right)$ over the convex set $C$ without the proximal term and taking into account the necessary optimality condition leads to
\begin{equation}
    \begin{aligned}[t]
        s^k &\in 
        \partial g \left(x^{k+1} \right) +  \left\{A^T v^k + \rho_{k} A^T\left(Ax^{k+1}-b\right) \right\} + \sum_{i=1}^m \max \left\{0,u_i^k+\rho_k c_i\left(x^{k+1} \right) \right\} \partial c_i\left(x^{k+1} \right) + N_C \left(x^{k+1} \right) \\
        &= \partial g \left(x^{k+1} \right) + \left\{A^T\mu^{k+1} \right\} + \sum_{i=1}^m \lambda_i^{k+1} \partial c_i \left( x^{k+1} \right) + N_C \left(x^{k+1} \right)	  \label{eq: optCond_subprobWithoutProx}
    \end{aligned}
\end{equation}
with $s^k \in \partial h \left(x^k\right)$ by our choice. Hence, as we aim to determine a generalized KKT point~\eqref{eq: KKT}, we would like to have $x^{k+1}=x^k$ (at least approximately) in the end. To yield the latter (asymptotically), it is a common technique to add a proximal term. This also ensures existence and uniqueness of the solution of each subproblem as the objective function is strongly convex and the minimization is carried out with respect to a convex set. Using $\Vert \cdot \Vert_{Q_k}$ within the proximal term instead of the Euclidean norm  appears to be advantageous in some numerical experiments in order to relativize the effect that the proximal parameter might grow rapidly. 
Note that the conditions imposed in~\ref{alg: initialisation} on the sequence $\left\{Q_k \right\}_{k \in \N_0}$ of symmetric positive definite matrices correspond to $\left\{Q_k \right\}_{k \in \N_0}$ and $\left\{Q_k^{-1} \right\}_{k \in \N_0}$ being bounded.

In order to establish the convergence theory of the classical safeguarded augmented Lagrangian method~\cite{Birgin2014}, it is crucial that the sequence of penalty parameters $\left\{ \rho_k \right\}_{k \in \N_0}$ stays bounded only if there is sufficient progress in feasibility and complementarity, whereas otherwise $\left\{\rho_k\right\}_{k_\in \N_0}$ grows to infinity.
It turns out that for our algorithm this property is not only required for the sequence of penalty parameters  $\left\{ \rho_k \right\}_{k \in \N_0}$, but also for that of proximal parameters $\left\{ \sigma_k \right\}_{k \in \N_0}$. Having in mind the previous paragraph with the optimality condition obtained from \ref{alg: subprob} given by
\begin{equation} \label{eq: optCond_subprob}
    s^k - \sigma_k Q_k \left(x^{k+1}-x^k \right) \in \partial g \left(x^{k+1} \right) + \left\{A^T\mu^{k+1} \right\} + \sum_{i=1}^m \lambda_i^{k+1} \partial c_i \left( x^{k+1} \right) + N_C \left(x^{k+1} \right)
\end{equation}
one would like to have $\sigma_k Q_k \left(x^{k+1}-x^k \right) \approx 0$ in the end, conferring~\eqref{eq: optCond_subprobWithoutProx}. 
Consequently, one has to ensure that the sequence $\left\{ \sigma_k \right\}_{k \in \N_0}$ of proximal parameters does not grow too fast. An adaptive update depending on the decay of $ \| x^{k+1} - x^k \| $ seems reasonable. 
Moreover, the sequence of penalty parameters~$\left\{\rho_k \right\}_{k \in \N_0}$ has to grow slower than the one of proximal parameters $\left\{ \sigma_k \right\}_{k \in \N_0}$, which suggests the update formula for $\rho_{k+1}$ in \ref{alg: feasibilityCheck_unsucc}.
Note that in \ref{alg: feasibilityCheck_unsucc} both the parameter $\eta$ and the counters $K$ and~$I_1$ depend on the iteration index $k$, but, to simplify the notation, we suppress this dependence in our notation. We stress that these counters play a central role for a careful update of $\epsilon_k$, and they will essentially be used in the proof of \cref{lem: proximalGrowth} only.

For the classical safeguarded augmented Lagrangian method~\cite{Birgin2014}, the only requirement imposed on the auxiliary sequences $\left\{v^k\right\}_{k \in \N_0} \subseteq \R^p$ and $\left\{u^k\right\}_{k \in \N_0} \subseteq \R^m_+$ for the construction of Lagrange multipliers is their boundedness.
In our context, however, the boundedness of these multipliers is not enough, and we have to choose the corresponding sequences in a particular way. More precisely, we require the sequences to satisfy the two inequalities
(see the proof of \cref{lem: NearMonotonicity})
\begin{align}
    \left(v^{k+1}-v^k\right)^T\left(Ax^{k+1}-b \right) &\leq 0, \label{eq: constructionMot_vk}\\
    \Big\| \max \left\{0, u^{k+1}+\rho_{k+1}c \left( x^{k+1} \right) \right\} \Big\|^2 - \| u^{k+1}\|^2 &\leq \Big\| \max \left\{0, u^{k}+\rho_{k+1}c \left( x^{k+1} \right) \right\} \Big\|^2 - \| u^{k}\|^2 \label{eq: constructionMot_uk}
\end{align}
for each $k \in \N_0$.
To motivate the precise update formula for $v^{k+1}$ in \ref{alg: auxiliarySequences}, we stick with the previous estimate for the Lagrange multiplier whenever the corresponding iterate $x^{k+1}$ already satisfies the equality constraints. Otherwise the equality constraints are violated, which means $Ax^{k+1}-b \neq 0$, and then our update formula results from considering the optimization problem 
\begin{equation} \label{eq: motivation_vk}
    \min_{v \in \R^p} \frac{1}{2} \Vert v \Vert^2 \quad \text{s.t.} \quad \left(v -v^k \right)^T \left(Ax^{k+1}-b \right) =0
\end{equation}
for determining $v^{k+1}$. 
This problem can be solved analytically and leads to the update of $v^k$ from \ref{alg: auxiliarySequences}. Note that $v^k$ is also feasible for~\eqref{eq: motivation_vk}, so we immediately obtain
\begin{equation} \label{eq: vk_monotonicity}
    \| v^{k+1} \| \leq \| v^k \|
\end{equation}
for all $k \in \N_0$. 
This ensures the boundedness of the sequence $\left\{v^k\right\}_{k \in \N_0}$. Let us note that in the optimization problem~\eqref{eq: motivation_vk} we place equality constraints though one might expect inequality constraints due to the requirement~\eqref{eq: constructionMot_vk}. The reason for this is that considering inequality constraints would change the unique solution of the optimization problem to $v^{k+1}=0$ whenever one has $-\left(v^k \right)^T \left(Ax^{k+1}-b\right) \leq 0$. As a consequence, also all subsequent members of the sequence would be equal the zero vector which, in general, does not seem to yield a good estimate for the Lagrange multiplier of the generalized KKT~point. 

Our construction of the auxiliary sequence $\left\{u^k\right\}_{k \in \N_0}$ is mainly motivated by the construction of~$\left\{v^k\right\}_{k \in \N_0}$ and results in a non-negative sequence meeting the requirement~\eqref{eq: constructionMot_uk} with equality and, in addition,
\[ \| u^{k+1} \| \leq \| u^k \|\]
for all $k \in \N_0$, ensuring once again the boundedness of $\left\{u^k\right\}_{k \in \N_0}$.

Let us note that, due to the monotonicity of the sequences $\left\{\| v^k \|\right\}_{k \in \N_0}$ and $\left\{\| u^k \|\right\}_{k \in \N_0}$, \cref{alg: sALMDC} breaks down to a pure penalty method whenever we pick $v^0:=u^0:=0$.

With our considerations regarding the proximal term we have already anticipated in parts the motivation of the termination criterion in \ref{alg: termination}. Nevertheless, let us go through the reasoning with the following lemma.

\begin{lemma}
    Suppose \cref{alg: sALMDC} stops at some iteration $k$. Then, the triple $\left(x^{k+1},\lambda^{k+1},\mu^{k+1} \right)$ is a generalized KKT~point of~\eqref{eq: problemFormulation}.
\end{lemma}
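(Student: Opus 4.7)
The plan is to verify all three parts of the generalized KKT conditions in~\eqref{eq: KKT} directly from the stopping test in \ref{alg: termination} combined with the optimality condition~\eqref{eq: optCond_subprob} of the subproblem solved in \ref{alg: subprob}.

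First, I would observe that the condition $\sigma_k Q_k(x^{k+1}-x^k)=0$ forces $x^{k+1}=x^k$. Indeed, by the choice in \ref{alg: initialisation}, $Q_k$ is symmetric positive definite and hence invertible, and $\sigma_k>0$ (since $\sigma_0>0$ and $\sigma_k$ is only multiplied by positive factors in \ref{alg: feasibilityCheck}), so $\sigma_k Q_k$ is invertible. This identification is crucial because the subgradient $s^k$ selected in \ref{alg: subprob} belongs to $\partial h(x^k)$, but for KKT at $x^{k+1}$ we need a subgradient of $h$ at $x^{k+1}$; thanks to $x^{k+1}=x^k$, we automatically have $s^k\in\partial h(x^{k+1})$.

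Next, I would feed the subproblem optimality condition~\eqref{eq: optCond_subprob},
\[
s^k - \sigma_k Q_k\left(x^{k+1}-x^k\right) \in \partial g\left(x^{k+1}\right) + \left\{A^T\mu^{k+1}\right\} + \sum_{i=1}^m \lambda_i^{k+1}\partial c_i\left(x^{k+1}\right) + N_C\left(x^{k+1}\right),
\]
and use $\sigma_k Q_k(x^{k+1}-x^k)=0$ to drop the proximal correction. Combined with $s^k\in\partial h(x^{k+1})$, this shows the intersection in~\eqref{eqSys: KKT_optimality} is nonempty with witness $s^k$.

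Finally, I would verify the remaining conditions. The equality $Ax^{k+1}=b$ is exactly one of the stopping tests, and $x^{k+1}\in C$ holds because the minimization in \ref{alg: subprob} is carried out over $C$. Non-negativity $\lambda^{k+1}\geq 0$ is immediate from the definition $\lambda^{k+1}=\max\{0,u^k+\rho_k c(x^{k+1})\}$ in \ref{alg: LagrangeMultiplier}. The componentwise equation $\min\{-c(x^{k+1}),\lambda^{k+1}\}=0$ from \ref{alg: termination} then yields, for every $i$, both $c_i(x^{k+1})\leq 0$ and $\lambda_i^{k+1}c_i(x^{k+1})=0$, establishing~\eqref{eqSys: KKT_complementary}. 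There is no real obstacle here; the lemma is essentially a sanity check that the stopping test has been chosen to exactly encode the generalized KKT system, and the argument above is almost a direct substitution.
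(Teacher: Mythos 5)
Your proposal is correct and follows essentially the same route as the paper's own proof: identify $x^{k+1}=x^k$ from the stopping test so that $s^k\in\partial h\left(x^{k+1}\right)$, plug this into the subproblem optimality condition~\eqref{eq: optCond_subprob} to obtain~\eqref{eqSys: KKT_optimality}, and read off feasibility and complementary slackness from the remaining termination conditions. The only difference is that you spell out explicitly why $\sigma_k Q_k\left(x^{k+1}-x^k\right)=0$ forces $x^{k+1}=x^k$, a detail the paper defers to the discussion following the lemma.
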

\begin{proof}
    Note that at the point of termination one has $x^{k+1} = x^k$ and hence, $s^k \in \partial h \left(x^{k+1} \right)$ holds. Thus, the optimality condition~\eqref{eq: optCond_subprob} implies
    \[ \partial h \left(x^{k+1} \right) \cap \bigg( \partial g \left(x^{k+1} \right) +\left\{ A^T \mu^{k+1}\right\} + \sum_{i=1}^m \lambda_i^{k+1} \partial c_i  \left(x^{k+1} \right) + N_C \left(x^{k+1} \right) \bigg) \neq \emptyset.  \]
    Furthermore, $ \min \left\{-c\left(x^{k+1}\right),\lambda^{k+1} \right\}=0$ is equivalent to
    \[ c \left(x^{k+1} \right) \leq 0, \quad \lambda^{k+1}\geq 0, \quad \left(\lambda^{k+1}\right)^Tc \left(x^{k+1}\right)=0.  \]
    By construction, $x^{k+1} \in C$ holds for all $k \in \N_0$, and satisfaction of the equality constraints is guaranteed by the termination criterion. Altogether, this proves that the triple $\left(x^{k+1},\lambda^{k+1},\mu^{k+1} \right)$ is indeed a generalized KKT~point of~\eqref{eq: problemFormulation}.
\end{proof}

Theoretically, one could replace the first termination condition in \ref{alg: termination} with the equivalent postulation $x^{k+1} = x^k$. But for practical implementation, where one checks for satisfaction of the stopping criterion up to some tolerance, it will definitely make a difference whether to take the factor $\sigma_k Q_k$ into account or not, especially as $\sigma_k$ might grow significantly. Hence, keeping in mind the optimality condition~\eqref{eq: optCond_subprob}, it seems advisable to numerically incorporate the additional factor. Thus, we stick with this formulation also for our theoretical considerations.\\

In the subsequent convergence theory, we assume without loss of generality that infinite sequences $ \left\{\left(x^k,s^k,\sigma_k,\rho_k,\epsilon_k,u^k,v^k \right) \right\}_{k \in \N_0}$ and $ \left\{\lambda^k,\mu^k\right\}_{k \in \N}$ are generated by \cref{alg: sALMDC}. 

In order to establish our final convergence result, some preliminary lemmata are proven initially. The first result regarding the auxiliary sequences $\left\{ v^k \right\}_{k \in \N_0}, \ \left\{u^k \right\}_{k \in \N_0}$ for the construction of Lagrange multipliers summarizes our previous explanations to motivate their update formulae. 

\begin{lemma} \label{lem: auxiliarySequences}
    The sequences $\left\{ v^k \right\}_{k \in \N_0} \subseteq \R^p, \ \left\{u^k \right\}_{k \in \N_0} \subseteq \R^m_+$ are bounded and satisfy~\eqref{eq: constructionMot_vk} and~\eqref{eq: constructionMot_uk}, respectively (even with equality).
\end{lemma}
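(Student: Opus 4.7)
My plan is to verify the three assertions (containment in the stated sets, boundedness, and the two inequalities with equality) by direct inspection of the update formulae in step~\ref{alg: auxiliarySequences}, with no additional machinery required. The heart of the argument is recognizing that each update formula projects $v^k$ (respectively the restriction $u_I^k$) onto the one-dimensional subspace spanned by $Ax^{k+1}-b$ (respectively $c_I(x^{k+1})$), so Cauchy--Schwarz immediately controls the norms.

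For the sequence $\left\{v^k\right\}$, I would split on the case $Ax^{k+1}=b$, in which $v^{k+1}=v^k$ and both desired properties are trivial. In the case $Ax^{k+1}\neq b$, plugging the explicit formula for $v^{k+1}$ into $(v^{k+1}-v^k)^T(Ax^{k+1}-b)$ yields
\[
    (v^{k+1})^T(Ax^{k+1}-b) = \frac{(v^k)^T(Ax^{k+1}-b)}{\| Ax^{k+1}-b \|^2}\| Ax^{k+1}-b \|^2 = (v^k)^T(Ax^{k+1}-b),
\]
so~\eqref{eq: constructionMot_vk} holds with equality. Applying Cauchy--Schwarz to the same formula gives $\|v^{k+1}\|\leq\|v^k\|$, so $\{\|v^k\|\}$ is non-increasing and in particular bounded by $\|v^0\|$.

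For $\left\{u^k\right\}$, I would first observe by induction that $u^k\in\R^m_+$: components $i \notin I$ are copied from $u^k$, and for $i\in I$ the scalar $(u_I^k)^T c_I(x^{k+1})$ is non-negative (since both $u_I^k\geq 0$ and $c_I(x^{k+1})>0$ by definition of $I$), while $c_i(x^{k+1})>0$ on $I$. Exactly as in the $v$-case, applying Cauchy--Schwarz to the $I$-block of the update gives $\|u_I^{k+1}\|\leq\|u_I^k\|$, hence $\|u^{k+1}\|\leq\|u^k\|$ and boundedness. The identity I need for the last step is
\[
    c_I(x^{k+1})^T u_I^{k+1}=\frac{(u_I^k)^T c_I(x^{k+1})}{\|c_I(x^{k+1})\|^2}\|c_I(x^{k+1})\|^2=(u_I^k)^T c_I(x^{k+1}),
\]
which is again a one-line computation from the update formula.

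Finally, for~\eqref{eq: constructionMot_uk} with equality, I would treat the two index blocks separately. For $i\notin I$ both $u_i^{k+1}=u_i^k$ and the $\max$-arguments coincide on both sides, so contributions cancel in the difference. For $i\in I$, since $u_i^k,u_i^{k+1}\geq 0$ and $c_i(x^{k+1})>0$, both $\max$-arguments $u_i^k+\rho_{k+1}c_i(x^{k+1})$ and $u_i^{k+1}+\rho_{k+1}c_i(x^{k+1})$ are strictly positive, so the $\max$ becomes trivial. Expanding the squares, the quadratic terms in $u^{k+1}$ and $u^k$ cancel against the subtracted $\|u^{k+1}\|^2$ and $\|u^k\|^2$, and the equality reduces precisely to $c_I(x^{k+1})^T u_I^{k+1}=c_I(x^{k+1})^T u_I^k$, already verified. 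I do not anticipate any substantive obstacle; the entire proof is a careful bookkeeping of which component updates are active, plus one application of Cauchy--Schwarz in each case.
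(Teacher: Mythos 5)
Your proposal is correct and follows essentially the same route as the paper: case split and Cauchy--Schwarz for $\{v^k\}$, induction for non-negativity of $\{u^k\}$, Cauchy--Schwarz on the $I$-block for monotonicity, and reduction of the equality in~\eqref{eq: constructionMot_uk} to the invariance of $\left(u_I^k\right)^T c_I\left(x^{k+1}\right)$ under the update. The only (harmless) organizational difference is that you split the components by $I$ versus its complement and use $c_i\left(x^{k+1}\right)>0$ on $I$ to drop the $\max$, whereas the paper works with the index set $J$ of positive $\max$-arguments and first shows $J$ is unchanged by the update; your bookkeeping is, if anything, slightly more direct.
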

\begin{proof}
    To begin with the sequence $\left\{v^k \right\}_{k \in \N_0}$, in case $Ax^{k+1}=b$ holds for some $k \in \N_{0}$ the update formula in~\ref{alg: auxiliarySequences} reads $v^{k+1}=v^k$. Hence, on one hand, \eqref{eq: constructionMot_vk} is satisfied trivially with equality and, on the other hand, the same prevails for \eqref{eq: vk_monotonicity}. Thus, consider the case $Ax^{k+1} \neq b$ for some $k \in \N_{0}$. Then, $v^{k+1}$ gets determined as solution to problem~\eqref{eq: motivation_vk}. Obviously, the constraint ensures that \eqref{eq: constructionMot_vk} holds with equality. Besides, like already pointed out, feasibility of $v^k$ to problem~\eqref{eq: motivation_vk} yields anew~\eqref{eq: vk_monotonicity}. Altogether,  the whole sequence $\left\{\| v^k \| \right\}_{k \in \N_0}$ to be monotonically decreasing has been proven, which reveals boundedness of $\left\{v^k\right\}_{k \in \N_0}$.

    Turning to $\left\{u^k\right\}_{k \in \N_0}$, we first note that the sequence being (componentwise) non-negative follows inductively from the update formula in~\ref{alg: auxiliarySequences} by the choice of the index set~$I$.
    Next, recognize that for $k \in \N_0$ and $i \in \{1,\dots,m\}$ one has $u_i^{k+1}+\rho_{k+1} c_i \left(x^{k+1} \right) >\nolinebreak 0$ if and only if $u_i^k + \rho_{k+1} c_i \left(x^{k+1} \right) >0$ as the sequence  $\left\{u^k\right\}_{k \in \N_0}$ is (componentwise) non-negative and the update formula in~\ref{alg: auxiliarySequences} reveals that  $u_i^{k+1}\neq u_i^k$ occurs only if $c_i \left(x^{k+1} \right) >\nolinebreak 0$.
    Hence, introducing
    \begin{align*}
        J&:= \left\{ i \in \{1,\dots,m\} \ \middle| \ u_i^k + \rho_{k+1} c_i\left(x^{k+1}\right) > 0 \right\} \\
        & = \left\{ i \in \{1,\dots,m\} \ \middle| \ u_i^{k+1} + \rho_{k+1} c_i \left(x^{k+1} \right) > 0 \right\},
    \end{align*}
    one obtains
    \[ \Big\| \max \left\{0,u^{k+1}+\rho_{k+1} c\left(x^{k+1} \right) \right\} \Big\|^2 - \| u^{k+1} \|^2 = \| u_J^{k+1} \|^2 + 2 \rho_{k+1} \left(u_J^{k+1} \right)^T c_J\left(x^{k+1}\right)  + \rho_{k+1}^2 \| c_J\left(x^{k+1}\right) \|^2 - \| u^{k+1} \|^2 \]
    as well as
    \[
        \Big\| \max \left\{0,u^{k}+\rho_{k+1} c\left(x^{k+1} \right) \right\} \Big\|^2 - \| u^{k} \|^2= \| u_J^{k} \|^2 + 2 \rho_{k+1} \left(u_J^{k} \right)^T c_J\left(x^{k+1}\right) + \rho_{k+1}^2 \| c_J\left(x^{k+1}\right) \|^2 - \| u^{k} \|^2.
    \]
    Therefore, to show that~\eqref{eq: constructionMot_uk} holds with equality it apparently suffices to prove
    \begin{equation} \label{eq: uk_scalarproduct}
        \left(u_J^{k+1}\right)^Tc_J\left(x^{k+1} \right) = \left(u_J^k\right)^Tc_J\left(x^{k+1}\right)
    \end{equation}
    together with
    \begin{equation} \label{eq: uk_normDiff}
        \| u_J^{k+1} \|^2 - \| u^{k+1} \|^2 = \| u_J^k\|^2 - \| u^k\|^2.
    \end{equation}
    To begin with~\eqref{eq: uk_scalarproduct}, the update formula in~\ref{alg: auxiliarySequences} gives primarily $\left(u_I^{k+1}\right)^T c_I\left(x^{k+1}\right) = \left(u_I^k\right)^Tc_I\left(x^{k+1}\right)$. However, observing that $I \subseteq J$ and $u_i^{k+1}=u_i^k$ for $i \in J \setminus I$, equation~\eqref{eq: uk_scalarproduct} follows.
    Denoting for a subset $S \subseteq \{1,\dots,m\}$ with $\neg S$ its respective complement, that is $\neg S:= \{1,\dots,m\} \setminus S$, equation~\eqref{eq: uk_normDiff} is equivalent to $\| u_{\neg J}^{k+1} \|^2 = \| u_{\neg J}^k \|^2$. But, since $\neg J \subseteq \neg I$ and $u_i^{k+1} = u_i^k$ for all $i \in \neg I$, the latter clearly holds.

    Verifying boundedness of the sequence $\left\{u^k\right\}_{k \in \N_0}$ gets once more done by establishing monotonicity of the sequence $\left\{ \| u^k\| \right\}_{k \in \N_0}$. 
    To this end, recognize that the update formula in~\ref{alg: auxiliarySequences}  together with the Cauchy-Schwarz inequality yields for each $k \in \N_0$
    \begin{equation*}
        \begin{aligned}[b]
            \| u^{k+1} \|^2 &= \| u_I^{k+1} \|^2 +\| u_{\neg I}^{k+1} \|^2 \\
            &= \left( \frac{\left\vert \left( u_I^k\right)^T c_I\left(x^{k+1}\right) \right\vert}{\| c_I \left(x^{k+1}\right) \|^2} \right)^2 \| c_I\left(x^{k+1} \right) \|^2 + \| u_{\neg I}^k\|^2 \\
            &\leq \| u_I^k \|^2 + \| u_{\neg I}^k \|^2 = \| u^k\|^2.
        \end{aligned}
            \qedhere
    \end{equation*}
\end{proof}

In order to prove our final convergence result, we do not require the specific expressions of the auxiliary sequences $\left\{ v^k \right\}_{k \in \N_0}, \ \left\{u^k \right\}_{k \in \N_0}$ given in \ref{alg: auxiliarySequences}. Instead, only the two properties~\eqref{eq: constructionMot_vk} and~\eqref{eq: constructionMot_uk} will be exploited. Hence, other choices differing from the presented one are possible, too.
In particular, theoretically in each case also any constant sequence would serve the purpose. However, it is known from the classical safeguarded augmented Lagrangian method~\cite{Birgin2014} that in practice choosing $u^k$ and~$v^k$ as preferably good approximations to the conventional Lagrange multiplier updates of $\lambda^k$ and $\mu^k$, respectively, has proven one's worth. Consequently, adaptive choices seem to be more advantageous. With the just said at first glance it might be more promising than our hitherto existing approach to choose $u^k$ and $v^k$ as projections of the Lagrange multiplier updates $\lambda^k$ and $\mu^k$ onto a suitable convex set of vectors satisfying the requirements given in \cref{lem: auxiliarySequences}.
However, involving these considerations in some of our numerical experiments has not led to significant improvements compared to the implementation with the update suggested in \cref{alg: sALMDC}. In addition, such an alternative update is concomitant with a higher computational effort leading to longer runtime. Hence, we stick with the update formula proposed in \cref{alg: sALMDC}.

The crucial properties \eqref{eq: constructionMot_vk} and \eqref{eq: constructionMot_uk} allow us to verify the following inequality.

\begin{lemma} \label{lem: NearMonotonicity}
    Let
    \[
        L_{\rho}(x,\lambda,\mu):= g(x)-h(x)+\mu^T(Ax-b)+\frac{\rho}{2}\Vert Ax-b \Vert^2 + \frac{1}{2\rho} \left( \Big\| \max \{0,\lambda+\rho c(x) \} \Big\|^2- \Vert \lambda \Vert^2 \right)
    \]
    be the augmented Lagrangian function of~\eqref{eq: problemFormulation}
    and
    \begin{equation} \label{eq: SequenceMonotonicity}
        \begin{aligned}[t]
            \ell_{k+1} := & \, L_{\rho_k} \left(x^{k+1},\lambda^{k+1},\mu^{k+1}\right)-\rho_{k} \| Ax^{k+1}-b \|^2 -\frac{1}{2\rho_k}\left[ \Big\| \max \left\{0, \lambda^{k+1}+\rho_k c \left(x^{k+1} \right) \right\} \Big\|^2- \| \lambda^{k+1} \|^2 \right. \\ 
            &-\left. \left( \Big\| \max \left\{0, u^{k}+\rho_k c \left(x^{k+1} \right) \right\} \Big\|^2- \| u^{k} \|^2 \right)\right]
        \end{aligned}
    \end{equation}
    for $k \in \N$.
    In case $\rho_{k}=\rho_{k-1}$, it holds
    \[ \ell_{k} \geq \ell_{k+1} + \frac{\sigma_k}{2} \| x^{k+1} - x^k \|_{Q_k}^2.\]
\end{lemma}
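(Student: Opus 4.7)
The key observation is that, after unfolding $\mu^{k+1}=v^k+\rho_k(Ax^{k+1}-b)$ and $\lambda^{k+1}=\max\{0,u^k+\rho_kc(x^{k+1})\}$ in the definition of $\ell_{k+1}$, almost everything cancels and one is left with the clean identity
\[ \ell_{k+1} = L_{\rho_k}\!\left(x^{k+1},u^k,v^k\right), \]
i.e., the value of the \emph{true} (unlinearized) augmented Lagrangian at $x^{k+1}$ with the safeguarded multipliers $u^k,v^k$. Analogously, $\ell_k=L_{\rho_{k-1}}(x^k,u^{k-1},v^{k-1})$. I would carry out this simplification first, so that the whole statement reduces to proving, under $\rho_k=\rho_{k-1}=:\rho$, the chain
\[ L_\rho(x^k,u^{k-1},v^{k-1}) \;\geq\; L_\rho(x^k,u^k,v^k) \;\geq\; L_\rho(x^{k+1},u^k,v^k) + \tfrac{\sigma_k}{2}\|x^{k+1}-x^k\|_{Q_k}^2. \]

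For the first inequality (same primal argument, different multipliers), I would subtract the two sides at $x=x^k$ and note that the $g,h$ terms cancel. What remains is $(v^k-v^{k-1})^T(Ax^k-b)$ plus a shifted max-squared difference in $u$; these are exactly the quantities controlled (with the index shift $k\mapsto k-1$, which is legal because $\rho_k=\rho_{k-1}$) by \eqref{eq: constructionMot_vk} and \eqref{eq: constructionMot_uk}. Hence both bracketed pieces are nonpositive, giving the first inequality.

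For the second inequality, I would combine the optimality of $x^{k+1}$ with the DC-linearization. Since $x^{k+1}$ minimizes $L_{\rho_k}^{(k)}(\cdot,u^k,v^k)+\tfrac{\sigma_k}{2}\|\cdot-x^k\|_{Q_k}^2$ over $C$ and $x^k\in C$, comparing the two values yields
\[ L_{\rho_k}^{(k)}(x^{k+1},u^k,v^k)+\tfrac{\sigma_k}{2}\|x^{k+1}-x^k\|_{Q_k}^2 \;\leq\; L_{\rho_k}^{(k)}(x^k,u^k,v^k) \;=\; L_\rho(x^k,u^k,v^k), \]
where the last equality holds because the linearization is exact at $x^k$. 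Finally, convexity of $h$ and $s^k\in\partial h(x^k)$ give $-h(x^{k+1})\leq-h(x^k)-(s^k)^T(x^{k+1}-x^k)$, i.e.\ $L_\rho(x^{k+1},u^k,v^k)\leq L_{\rho_k}^{(k)}(x^{k+1},u^k,v^k)$, which chains into the previous display to produce exactly the second inequality.

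Stitching the two inequalities together yields the claim. The only genuinely delicate step is the bookkeeping in the initial simplification $\ell_{k+1}=L_{\rho_k}(x^{k+1},u^k,v^k)$: one must carefully track the three squared-norm terms (the two $\max$-squared expressions and $\|\lambda^{k+1}\|^2$) and notice that the subtracted block in~\eqref{eq: SequenceMonotonicity} is precisely designed to trade the update $\lambda^{k+1}$ back for $u^k$; once this identification is made the rest is a direct application of the minimizing property of $x^{k+1}$, the subgradient inequality for $h$, and the two standing inequalities \eqref{eq: constructionMot_vk}--\eqref{eq: constructionMot_uk}.
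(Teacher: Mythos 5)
Your proposal is correct and follows essentially the same route as the paper: both arguments pass through the intermediate quantity $L_{\rho_k}\left(x^k,u^k,v^k\right)$, use the identity $\ell_{k+1}=L_{\rho_k}\left(x^{k+1},u^k,v^k\right)$ obtained from the multiplier updates, invoke the minimizing property of $x^{k+1}$ together with the fact that the linearization of $-h$ is an exact-at-$x^k$ majorant, and control the change of multipliers via \eqref{eq: constructionMot_vk} and \eqref{eq: constructionMot_uk}. The only difference is presentational (you simplify $\ell_k$ and $\ell_{k+1}$ first and then prove a two-link chain, whereas the paper proves the two estimates \eqref{eqSys: NearMonotonicity_lowerCase} and \eqref{eqSys: NearMonotonicity_upperCase} around the same pivot), so no substantive gap remains.
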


\begin{proof}
    We will prove the assertion by showing that for all $k \in \N$ it holds
    \begin{align*}
        &L_{\rho_k} \left(x^k,\lambda^k,\mu^k\right)-\rho_{k-1} \| Ax^k-b \|^2
        -\frac{1}{2\rho_k}\left[ \Big\| \max \left\{0, \lambda^k+\rho_k c \left(x^k \right) \right\} \Big\|^2- \| \lambda^k \|^2  \right. \\
        &\quad -\left(  \left. \Big\| \max \left\{0, u^{k-1}+\rho_k c \left(x^k \right) \right\} \Big\|^2- \| u^{k-1} \|^2 \right) \right]\\
        & \geq \ell_{k+1}+\frac{\sigma_k}{2}\| x^{k+1}-x^k \|^2_{Q_k}.
    \end{align*}
    Involving the assumption $\rho_k=\rho_{k-1}$ the claim follows. To verify the above inequality we give evidence of the two estimates
    \begin{equation}
        L_{\rho_k}\left(x^k,u^k,v^k\right) \geq \ell_{k+1}+\frac{\sigma_k}{2}\| x^{k+1}-x^k \|^2_{Q_k} \label{eqSys: NearMonotonicity_lowerCase}
    \end{equation}
    and
    \begin{equation}
        \begin{aligned}[t]
            L_{\rho_k}\left(x^k,u^k,v^k\right) \leq & \, L_{\rho_k} \left(x^k,\lambda^k,\mu^k\right)-\rho_{k-1} \| Ax^k-b \|^2  -\frac{1}{2\rho_k}\left[ \Big\| \max \left\{0, \lambda^k+\rho_k c \left(x^k \right) \right\} \Big\|^2- \| \lambda^k \|^2  \right. \\
            &-\left(  \left. \Big\| \max \left\{0, u^{k-1}+\rho_k c \left(x^k \right) \right\} \Big\|^2- \| u^{k-1} \|^2 \right) \right] \label{eqSys: NearMonotonicity_upperCase}
        \end{aligned}
    \end{equation}
    for each $k \in \N$.

    First, observe that, due to the respective definitions, $ L_{\rho_k}\left(x^k,u^k,v^k\right) = L_{\rho_k}^{(k)}\left(x^k,u^k,v^k\right)$ holds. Taking into account the construction of $x^{k+1}$ in \ref{alg: subprob}, it follows that
    \[ L_{\rho_k}^{(k)}\left(x^k,u^k,v^k\right) \geq L_{\rho_k}^{(k)}\left(x^{k+1},u^k,v^k\right) + \frac{\sigma_k}{2} \| x^{k+1}- x^k \|_{Q_k}^2. \]
    Since $L_{\rho}^{(k)} ( x, \lambda,\mu)$ differs from $L_{\rho}(x,\lambda,\mu)$ by replacing the function $-h$ with an affine majorant, one has
    \[ L_{\rho}^{(k)} (x,\lambda,\mu) \geq L_{\rho} (x,\lambda,\mu) \]
    for all $(x,\lambda,\mu) \in \R^n \times \R^m \times \R^p$, in particular for $ \left(x^{k+1},u^k,v^k\right)$. Utilizing the definition of the Lagrange multiplier $\mu^{k+1}$ in \ref{alg: LagrangeMultiplier} yields
    \begin{align*}
        L_{\rho_k} \left(x^{k+1},u^k,v^k \right) =\ell_{k+1}.
    \end{align*}
    Putting all estimates together gives~\eqref{eqSys: NearMonotonicity_lowerCase}.

    Exploiting once again the definition of the Lagrange multiplier $\mu^k$ and afterwards the properties~\eqref{eq: constructionMot_vk} and~\eqref{eq: constructionMot_uk}, we obtain
    \begin{align*}
        L_{\rho_k} \left(x^k,u^k,v^k\right) =& \, L_{\rho_k} \left(x^k,\lambda^k,\mu^k \right) - \rho_{k-1} \| Ax^k-b \|^2 + \left(v^k-v^{k-1} \right)^T \left(Ax^k-b\right)\\
        & - \frac{1}{2\rho_k} \left[ \Big\| \max \left\{0, \lambda^k+ \rho_k c \left(x^k \right) \right\} \Big\|^2 - \| \lambda^k \|^2  - \left(  \Big\| \max \left\{ 0, u^k + \rho_k c \left(x^k \right) \right\} \Big\|^2 - \| u^k \|^2 \right) \right] \\
        \leq & \, L_{\rho_k} \left(x^k,\lambda^k,\mu^k\right)-\rho_{k-1} \| Ax^k-b \|^2
        -\frac{1}{2\rho_k}\left[ \Big\| \max \left\{0, \lambda^k+\rho_k c \left(x^k \right) \right\} \Big\|^2 - \| \lambda^k \|^2\right. \nonumber \\
        & -\left(  \left. \Big\| \max \left\{0, u^{k-1}+\rho_k c \left(x^k \right) \right\} \Big\|^2- \| u^{k-1} \|^2 \right) \right]
    \end{align*}
    which proves~\eqref{eqSys: NearMonotonicity_upperCase}.
    Note that the last inequality can actually be sharpened to an equality due to \cref{lem: auxiliarySequences}.
\end{proof}

The above lemma precisely yields that the sequence $\left\{ \ell_k \right\}_{k > \kappa}$ defined by~\eqref{eq: SequenceMonotonicity}
is monotonically decreasing if the penalty parameter $\rho_k$ stays constant from the index $\kappa$ on.
This observation is crucial to establish our convergence theory. Hence, we are interested under which circumstances the sequence of penalty parameters $\left\{\rho_k\right\}_{k \in \N_0}$ shows this behavior. In fact, the next result reveals that $\left\{ \sigma_k \right\}_{k \in \N_0}$ and accordingly, $\left\{ \rho_k \right\}_{k \in \N_0}$ remain bounded (or to be more precise: constant up to finitely many exceptions) if and only if \ref{alg: feasibilityCheck_unsucc} occurs only finitely many times.

\begin{lemma} \label{lem: proximalGrowth}
    Assume that the sequence $\left\{x^k\right\}_{k \in \N_0}$ stays bounded. Then the sequence of proximal parameters~$\left\{\sigma_k \right\}_{k \in \N_0}$ goes to infinity if and only if \ref{alg: feasibilityCheck_unsucc} is entered infinitely many times.

    In this case also the sequence of penalty parameters $\left\{\rho_k \right\}_{k \in \N_0}$ goes to infinity.
\end{lemma}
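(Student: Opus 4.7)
The plan is to prove the two directions separately. The easy implication ``$\sigma_k\to\infty\Rightarrow$ \ref{alg: feasibilityCheck_unsucc} is entered infinitely often'' is a direct contrapositive: if \ref{alg: feasibilityCheck_unsucc} occurs only finitely often, then from some iteration on only \ref{alg: feasibilityCheck_succ} applies and freezes both $\sigma_k$ and $\rho_k$ at their current values, so neither can tend to infinity.

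For the harder converse I would argue by contradiction, assuming \ref{alg: feasibilityCheck_unsucc} is entered infinitely often while $\sigma_k\leq\bar\sigma$ for some $\bar\sigma>0$; note that both updates give $\sigma_{k+1}\geq\sigma_k$, so $\{\sigma_k\}$ is non-decreasing. The first step is to rule out the sub-cases of \ref{alg: feasibilityCheck_unsucc} that multiplicatively increase $\sigma_k$: the sub-case $\|x^{k+1}-x^k\|^{\alpha}\geq\epsilon_k$ (which triggers $\eta=\overline\eta$) and the sub-case $x^{k+1}=x^k$ both force $\sigma_{k+1}\geq\overline\eta\sigma_k$, and boundedness of $\{\sigma_k\}$ allows each only finitely often. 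Thus for all large $k$ every visit to \ref{alg: feasibilityCheck_unsucc} satisfies $\|x^{k+1}-x^k\|^{\alpha}<\epsilon_k$ with $x^{k+1}\neq x^k$, and the requirement $\sigma_{k+1}=\max\{1/\|x^{k+1}-x^k\|^{\alpha},\sigma_k\}\leq\bar\sigma$ forces $\|x^{k+1}-x^k\|^{\alpha}\geq 1/\bar\sigma$. Combined with $\|x^{k+1}-x^k\|^{\alpha}<\epsilon_k$, this gives $\epsilon_k>1/\bar\sigma$, so because $\beta\in(0,1)$ the $\epsilon$-update can be triggered only finitely often and $\epsilon_k\equiv\epsilon^{\ast}$ from some index onward.

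The main obstacle is the careful bookkeeping of the counters $K$ and $I_1$ in this stabilized regime. Once $\epsilon_k$ stops decreasing, these counters are never reset; since \ref{alg: feasibilityCheck_unsucc} is still visited infinitely often, $K$ eventually exceeds $M$, and the reset condition can be avoided only if $I_1$ stays strictly below $N$ thereafter. Because $I_1$ is incremented whenever $\sigma_{k+1}=1/\|x^{k+1}-x^k\|^{\alpha}$, this bound forces $\sigma_{k+1}=\sigma_k$ for all large $k$, so $\sigma_k\equiv\sigma^{\ast}$ and $\rho_k\equiv(\sigma^{\ast})^\gamma$ are constant from some $k_0$ onward. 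With $\rho_k=\rho_{k-1}$ in place, \cref{lem: NearMonotonicity} yields $\ell_k-\ell_{k+1}\geq\frac{\sigma^{\ast}\underline\lambda}{2}\|x^{k+1}-x^k\|^2$, and a short computation unfolding the definition of $\ell_k$ shows it is bounded below, using boundedness of $\{x^k\}$, continuity of $g,h$, the bounds on $\{u^k\},\{v^k\}$ from \cref{lem: auxiliarySequences}, and $\rho_k\geq\rho_0>0$. Hence $\sum_k\|x^{k+1}-x^k\|^2<\infty$ and in particular $\|x^{k+1}-x^k\|\to 0$. But in the stabilized case-(B) regime, $\sigma_{k+1}=\sigma^{\ast}$ enforces $1/\|x^{k+1}-x^k\|^{\alpha}\leq\sigma^{\ast}$, hence $\|x^{k+1}-x^k\|\geq(1/\sigma^{\ast})^{1/\alpha}>0$ for infinitely many $k$, a contradiction. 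Finally, $\rho_k=\sigma_k^{\gamma}$ with $\gamma>0$ immediately propagates $\sigma_k\to\infty$ to $\rho_k\to\infty$.
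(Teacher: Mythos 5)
Your proof is correct and rests on the same ingredients as the paper's: the multiplicative $\overline{\eta}$-growth rules out the sub-cases $x^{k+1}=x^k$ and $\| x^{k+1}-x^k \|^{\alpha}\geq\epsilon_k$ occurring infinitely often under boundedness of $\left\{\sigma_k\right\}_{k\in\N_0}$, the bookkeeping of the counters $K$ and $I_1$ then forces $\sigma_k$ to be eventually constant, and \cref{lem: NearMonotonicity} yields $\| x^{k+1}-x^k \|\to 0$, contradicting $\sigma_{k+1}\geq 1/\| x^{k+1}-x^k \|^{\alpha}$. The only difference is organizational: you run one global contradiction argument in which the paper's cases A, B.I, B.II.1, B.II.2(a) and B.II.2(b)(i) collapse into preliminary elimination steps (with the $\epsilon_k\to 0$ case handled by deducing $\epsilon_k>1/\overline{\sigma}$ rather than treated separately), while the paper's decisive case B.II.2(b)(ii) is exactly your final contradiction.
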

\begin{proof}
    Let us note first that as an immediate consequence of the construction of the proximal parameter~$\sigma_k$, the corresponding sequence is monotonically increasing.

    Obviously, if case \ref{alg: feasibilityCheck_unsucc} occurs only finitely many times, $\left\{\sigma_k \right\}_{k \in \N_0}$ eventually stays constant and hence bounded. Thus, suppose that \ref{alg: feasibilityCheck_unsucc} arises infinitely many times and denote the corresponding index set with $J \subseteq \N_0$.
    We proceed with distinguishing whether the set $K_0:= \left\{ k \in J \ \middle| \ x^{k+1}=x^k \right\}$ contains infinitely many indices or not. 

    %
    %
    %

    A. \emph{Assume that $K_0$ is an infinite index set.} By construction, one has $\sigma_{k+1}= \overline{\eta}\sigma_k$ for all $k \in K_0$. Hence, as $\overline{\eta}>1$ by our choice and $\left\{\sigma_k \right\}_{k \in \N_0}$ is monotonically increasing, the unbounded growth of $\left\{\sigma_k \right\}_{k \in \N_0}$ follows immediately.

    B. \emph{Assume that $K_0$ contains only a finite number of indices.} Let us denote  $k_0:= \max \left\{k \ \middle| \ k \in K_0\right\}$. Then one has $\sigma_{k+1}= \max \left\{ \frac{1}{\| x^{k+1}- x^k \|^{\alpha}}, \eta \sigma_k \right\}$ for all $k \in J, \ \linebreak[3] k > \nolinebreak k_0$.
    Now, we continue by distinguishing whether the monotonically decreasing and positive sequence $\left\{ \epsilon_k \right\}_{k \in \N_0}$ stays bounded away from zero or not.

    B.I) \emph{Assume that $\epsilon_k \rightarrow 0$ for $k \rightarrow \infty$.} Apparently, this requires that the counter $K$ exceeds $M$ infinitely many times after being reset to zero. Hence, we can find a subsequence $\left\{ \| x^{k+1}- x^k \|^{\alpha}\right\}_{\mathcal{K}}$, $ \mathcal{K}\subseteq J$, satisfying $\| x^{k+1}- x^k \|^{\alpha} < \nolinebreak \epsilon_k \linebreak[3]$ for all $k \in \mathcal{K}$. But due to our assumption this implies $ \| x^{k+1}- x^k \|^{\alpha} \rightarrow_{\mathcal{K}} 0$ which, in turn, leads to $\sigma_k \rightarrow \infty$ for $k \rightarrow \infty$ since clearly one has $\sigma_{k+1} \geq \frac{1}{\| x^{k+1}- x^k \|^{\alpha}}$ at least for all $k \in J, \ k > k_0$.

    B.II) \emph{Assume that $\left\{ \epsilon_k\right\}_{k \in \N_0} $ stays bounded away from zero.} By construction this means that there exists $\kappa \in \N_0$ such that $\epsilon_k = \epsilon_{\kappa}>0$ for all $k \geq \kappa$. Furthermore, this implies that the counters $K$ and $I_1$ cannot simultaneously exceed their bounds $M$ and $N$, respectively, infinitely many times after being reset to zero. In the following we differentiate whether the counter $K$ finally sticks to a value less than~$M$ or not.

    B.II.1. \emph{Assume that $K$ eventually remains constant with a value less than $M$.} Consequently, there exists $\overline{\kappa} \in \N_0$ ($\overline{\kappa}\geq \kappa$) such that $\| x^{k+1}- x^k \|^{\alpha} \geq \epsilon_k = \epsilon_{\kappa}$ for all $k \in J, \ k \geq \overline{\kappa}$ (where $\epsilon_{\kappa}$ denotes the lower bound of the sequence $\left\{ \epsilon_k \right\}_{k \in \N_0}$). But this implies $\sigma_{k+1} \geq \overline{\eta} \sigma_k$ for all $k \in J, \ k \geq \overline{\kappa}$.
    Hence,  it gets immediately clear that $\left\{\sigma_k \right\}_{k \in \N_0}$ goes to infinity as $J$ is an infinite index set, $\overline{\eta} > 1$ and $\left\{\sigma_k \right\}_{k \in \N_0}$ is monotonically increasing.

    B.II.2. \emph{Assume, on the contrary, that $K$ does not stick to a value less than $M$.} In this case both, $K$ remaining constant with a value larger than $M$ and $K$ tending to infinity, is possible. But due to our assumption for case~B.II) the counter $I_1$ necessarily has to stay at a constant level less than $N$ and does not get updated anymore. Again, we distinguish two cases, namely $K$ finally remaining constant as well and $K$ tending to infinity.

    B.II.2(a) \emph{Assume that $K$ eventually sticks to a value larger than $M$.} Then we can proceed as in \linebreak case~B.II.1.  to obtain the unbounded growth of $\left\{\sigma_k \right\}_{k \in \N_0}$.

    B.II.2(b) \emph{Assume that $K \rightarrow \infty$.} Still we have to distinguish two cases, namely whether the set $\overline{K}:= \nolinebreak \left\{ k \in J, \ k \geq \kappa \ \middle| \ \| x^{k+1}- x^k \|^{\alpha} \geq \epsilon_k = \epsilon_{\kappa} \right\}$ is finite or not.

    B.II.2.(b)(i) \emph{Assume that $\overline{K}$ is an infinite set.} By construction, one has $\sigma_{k+1} \geq \overline{\eta} \sigma_k$ for all $k \in \overline{K}$.
    Hence, along the lines of case~A., one obtains the unbounded growth of $\left\{\sigma_k\right\}_{k \in \N_0}$.

    B.II.2.(b)(ii) \emph{Assume that $\overline{K}$ contains only finitely many indices.} Then there exists $\hat{\kappa}\geq \nolinebreak \kappa$ such that $ \| x^{k+1}- x^k \|^{\alpha} < \epsilon_k = \epsilon_{\kappa}$ for all $k \in \nolinebreak[3] J, \ \linebreak[2]  k \geq \nolinebreak[2] \hat{\kappa}$. 
    Taking without loss of generality $\hat{\kappa} > k_0$ ensures 
    \[\sigma_{k+1}= \max \left\{ \frac{1}{\| x^{k+1}- x^k \|^{\alpha}}, \sigma_k \right\} \qquad \forall k\in J, \ k \geq \hat{\kappa}.\] 
    But, as our assumption for case~B.II.2. leads to $I_1$ finally remaining at a value less than $N$, the update formula eventually reads as $\sigma_{k+1}= \sigma_k$ for all $k \geq \overline{k}$ with $ \overline{k}:= \hat{\kappa}+N-1$.
    Note that, taking into account also the update formula in \ref{alg: feasibilityCheck_succ}, we do not have to restrict ourselves to indices $k \in J$ this time.
    Obviously, this yields $\sigma_k= \sigma_{\overline{k}}$ remaining constant for all $k \geq \overline{k}$. 
    Hence, also $ \left\{ \rho_k \right\}_{k \geq \overline{k}}$ is a constant sequence.
    \cref{lem: NearMonotonicity} therefore shows
    \begin{equation} \label{eq: MonotonicityConstantPenalty}
        \ell_k \geq \ell_{k+1} + \frac{\sigma_k}{2} \| x^{k+1} - x^k \|^2_{Q_k}
    \end{equation}
    for all $k > \overline{k}$ with $\ell_k, \ k \in \N,$ defined in the said lemma.
    As $\left\{x^k \right\}_{k \in \N_0}$ is assumed to be bounded and $ \left\{u^k\right\}_{k \in \N_0}, \ \left\{v^k\right\}_{k \in \N_0}$ are bounded due to \cref{lem: auxiliarySequences}, $\left\{\lambda^k\right\}_{k \in \N_0}$ and $ \left\{ \mu^k \right\}_{k \in \N_0}$ stay bounded, too. Together with the boundedness of $\left\{\rho_k \right\}_{k \in \N_0}$ and continuity of the functions $g, \ h$ and $c$, one obtains boundedness of $\left\{ \ell_k \right\}_{k \in \N_0}$, which due to monotonicity (from index $\overline{k}+1$ on) is therefore convergent. Taking the limit $k \rightarrow_{k > \overline{k}} \infty$ in~\eqref{eq: MonotonicityConstantPenalty} and keeping in mind that $\left\{\sigma_k \right\}_{k \in \N_0}$ eventually remains constant one yields $\| x^{k+1}-x^k \|_{Q_k} \rightarrow 0$ for $k \rightarrow \infty$. Furthermore, exploiting that 
    \[\| x^{k+1}-x^k \|_{Q_k} \geq \sqrt{\lambda_{\min} \left(Q_k\right)} \| x^{k+1}-x^k \| \geq \sqrt{\underline{\lambda}} \| x^{k+1}-x^k \| \]
    holds by our choice of the sequence $\left\{Q_k \right\}_{k \in \N_0}$ subject to the assumptions in~\ref{alg: initialisation}, $\| x^{k+1}-x^k\| \rightarrow 0$ for $k \rightarrow \infty$ follows.
    But as $\sigma_{k+1} \geq \frac{1}{\| x^{k+1} - x^k \|^{\alpha}}$ at least for all $k \in J, \ k > k_0$ this would imply an unbounded growth of $\left\{\sigma_k \right\}_{k \in \N_0}$, a contradiction. Hence, case~B.II.2(b)(ii) is a mere hypothetical case which can never occur. \\

    All cases together show the unbounded growth of $\left\{\sigma_k \right\}_{k \in \N_0}$ in the event of \ref{alg: feasibilityCheck_unsucc} being entered infinitely many times. 

    Furthermore, by the update formulas for $\rho_{k+1}$ in \ref{alg: feasibilityCheck_succ} and \ref{alg: feasibilityCheck_unsucc} it gets immediately clear that the sequence of penalty parameters $\left\{ \rho_k \right\}_{k \in \N_0}$ inherits the asymptotic behavior from the sequence of proximal parameters $\left\{\sigma_k \right\}_{k \in \N_0}$.
\end{proof} 

%

Whenever the penalty parameter goes to infinity in the context of safeguarded augmented Lagrangian methods, it is quite usual to require a suitable constraint qualification in order to ensure boundedness of the Lagrange multipliers. So does the presented method. Hence, in the following we introduce three qualification conditions for problem~\eqref{eq: problemFormulation} which turn out to be equivalent in our setting.

\begin{definition} \label{def: CQs}
    Let $\widehat{x}$ denote a feasible point of~\eqref{eq: problemFormulation}.
    \begin{enumerate}
        \item \label{def: CQs_mSCQ} We say that the \emph{modified Slater Constraint Qualification} (mSCQ) holds for~\eqref{eq: problemFormulation} in case there exists $\widetilde{x} \in C$ with 
            \begin{itemize}
                \item $c_i \left(\widetilde{x}\right) < 0 \ \forall i=1,\dots, m,$
                \item $A\widetilde{x}=b$ and
                \item $0 \in \interior \left( \ell (C) \right)$,
            \end{itemize}
            where $\ell$ is defined by~\eqref{eq: equalityConstraints}.
            Such a point $\widetilde{x}$ is then called a \emph{Slater point}.
        \item \label{def: CQs_EMFCQ} The point $\widehat{x}$ is said to satisfy the \emph{modified Extended Mangasarian Fromovitz Constraint Qualification} (mEMFCQ) whenever 
            \begin{itemize}
                \item $0 \in \interior \left( \ell (C) \right)$ and
                \item there exists $d \in T_C \left( \widehat{x} \right)$ such that $ Ad=0$ and $ c_i'\left( \widehat{x};d\right)<0 \ \forall i \in I\left( \widehat{x}\right)$ holds.
            \end{itemize}
            Thereby, $T_C \left( \widehat{x} \right)$ denotes the tangent cone from convex analysis and
            \[
                I \left( \widehat{x}\right) := \left\{ i \in \{1,\dots,m \} \ \middle| \ c_i \left( \widehat{x} \right) = 0 \right\}
            \]
            the active index set of $\widehat{x}$.
        \item We say that \emph{No Nonzero Abnormal Multiplier Constraint Qualification} (NNAMCQ) holds at $\widehat{x}$ if there is no nonzero vector $\left( \lambda, \mu \right) \in \R^m_+ \times \R^p$ satisfying
            \begin{equation} \label{eq: NNAMCQ}
                0 \in\left\{ A^T\mu \right\} + \sum_{i=1}^m \lambda_i \partial c_i\left( \widehat{x} \right) + N_C \left( \widehat{x} \right) \quad \text{and} \quad \lambda^Tc \left(\widehat{x}\right) = 0.
            \end{equation}
    \end{enumerate}
\end{definition}

Note that the two notions of Slater-type constraint qualifications used in this work (see \cref{def: SCQ} and \cref{def: CQs}~\ref{def: CQs_mSCQ}) are not equivalent to each other. In fact, none of them relates to the other. 

Adding the condition $0 \in \interior \left( \ell (C)\right)$ to the usual SCQ and EMFCQ, respectively, is required to obtain equivalence with NNAMCQ. Indeed, neglecting the said stipulation one cannot deduce that NNAMCQ is satisfied at some point $\widehat{x} \in \R^n$ for which EMFCQ (in the classical sense) holds or whenever SCQ (in the classical sense) is satisfied (whereas the corresponding reverse implications still hold true). This is illustrated by~\cref{ex: CQs}. 
For common definitions of SCQ and EMFCQ see, for example,~\cite{LeThi2014}.
Modifying SCQ in the proposed way has already been done in~\cite{Li1997,Zeng2023} in the context of vector optimization. For the sake of completeness, let us note that in~\cite{Zhang2014} the author points out that in a common non-convex and nonsmooth setting, NNAMCQ is in general weaker than EMFCQ whereas in the case of the functions involved being smooth (but still possibly non-convex) and abstract constraints being absent (i.e. $C= \R^n$) both constraint qualifications coincide. Moreover, it is well known that whenever considering convex feasible sets, the notions of SCQ and EMFCQ are equivalent (see e.g.~\cite{Stein2004}, although the feasible set deviates slightly from the one regarded in this work).

In the end, let us note that, technically speaking, we do not just add the condition $0 \in \interior \left( \ell (C)\right)$ to EMFCQ but replace the usual condition of the matrix~$A$ having full rank~$p$ by the proposed one. However, it is clear that the latter assumption gets implied by the prior one. Hence, mEMFCQ is indeed a stronger constraint qualification than the usual EMFCQ.

Let us now turn to the announced example, illustrating that SCQ and also EMFCQ in the classical sense do not suffice to guarantee NNAMCQ.

\begin{example}\label{ex: CQs} (SCQ or, equivalently, EMFCQ does not imply NNAMCQ)\\
    Consider problem~\eqref{eq: problemFormulation} with $n=2$, ($f$ an arbitrary DC~function) and
    \begin{align*}
        &c\left(x_1,x_2\right):=-x_2,\\
        &\ell \left(x_1,x_2\right):=x_1,\\
        &C:=\left\{ \left(0,x_2\right)^T \ \middle| \ x_2 \geq 0\right\}.
    \end{align*}
    Then, clearly $\widetilde{x}:=(0,1)^T$ is a Slater point of~\eqref{eq: problemFormulation} in the classical sense, that means a feasible point satisfying the inequality constraint(s) strictly (for a definition of the common SCQ see e.g.~\cite{LeThi2014,Mordukhovich2013}). In addition, one has $A:=(1,0)$ which is of full rank. Moreover, at the feasible point $\widehat{x}:=(0,0)^T$ one has $T_C \left(\widehat{x}\right)=C$. Hence, choosing $d:=(0,1)^T \in T_C \left(\widehat{x}\right)$ one obtains $Ad=0$ as well as $c'\left(\widehat{x};d\right)=(0,-1)\cdot (0,1)^T=-1<0$ (note that the inequality constraint is active at~$\widehat{x}$). Altogether this shows that EMFCQ in the classical sense holds at~$\widehat{x}$ (for a definition of EMFCQ of the common type see again~\cite{LeThi2014}). However, as $N_C \left(\widehat{x} \right) = \left\{x \in \R^2 \ \middle| \ x_2 \leq 0\right\}$, choosing $\lambda=0 \in \R_+, \ \mu=1$ and $\nu=(-1,0)^T\in N_C \left( \widehat{x} \right)$ one obtains $ 0 \in \left\{A^T\mu + \lambda \nabla c \left(\widehat{x}\right)\right\}+N_C \left(\widehat{x}\right)$ together with $\lambda c \left(\widehat{x}\right)=0$, showing that a nonzero abnormal multiplier of~\eqref{eq: problemFormulation} exists. Thus, NNAMCQ is not satisfied at~$\widehat{x}$.

    Let us note for the sake of completeness, that here, $\ell(C)=\{0\}$ and hence, $0 \notin \nolinebreak \interior\left(\ell(C)\right)$. 
    Besides, it holds $\widetilde{x} \in \ri (C) =\left\{ \left(0,x_2\right)^T \ \middle| \ x_2>0 \right\}$. Thus, also StCQ from \cref{def: SCQ} does not imply NNAMCQ.
\end{example}

Now, we want to verify that modifying the constraint qualifications in the proposed way indeed ensures not only equivalence of mSCQ and mEMFCQ but also with NNAMCQ in the sense below. 
To our knowledge the following two lemmata have not been proven yet but use techniques which are quite common for proving equivalence of constraint qualifications that are similar to the presented ones (see e.g.~\cite{Li1997,Stein2004}). Hence, we put their proofs in \cref{append: proof_CQs}. 

\begin{lemma} \label{lem: CQs_stepOne}
    Let $\widehat{x}$ be feasible for~\eqref{eq: problemFormulation}. 
    Then, the following statements are equivalent:
    \begin{enumerate}
        \item \label{lem: CQ_mEMFCQ_one} mEMFCQ holds in~$\widehat{x}$.
        \item \label{lem: CQ_NNAMCQ_one} NNAMCQ holds in~$\widehat{x}$.
    \end{enumerate}
\end{lemma}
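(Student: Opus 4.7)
The plan is to establish both implications by contradiction, combining \cref{thm: separationTheorem} with the normal/tangent cone duality \eqref{eq: NormalToTangentCone} and the support function representation \eqref{eq: Subdiff_dirDerivative} of directional derivatives.

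For the implication \ref{lem: CQ_mEMFCQ_one} $\Rightarrow$ \ref{lem: CQ_NNAMCQ_one}, assume a nonzero $(\lambda,\mu) \in \R^m_+ \times \R^p$ satisfies~\eqref{eq: NNAMCQ}, so that
\[0 = A^T\mu + \sum_{i \in I(\widehat{x})} \lambda_i \omega_i + \nu\]
for some $\omega_i \in \partial c_i(\widehat{x})$ and $\nu \in N_C(\widehat{x})$, where complementary slackness together with $\lambda\geq 0$ and $c(\widehat{x})\leq 0$ forces $\lambda_i=0$ for $i \notin I(\widehat{x})$. Pair this identity with the direction $d$ furnished by mEMFCQ: the term $\mu^T(Ad)$ vanishes since $Ad=0$, $\nu^T d \leq 0$ by \eqref{eq: NormalToTangentCone}, and $\omega_i^T d \leq c_i'(\widehat{x};d) < 0$ for $i\in I(\widehat{x})$ via \eqref{eq: Subdiff_dirDerivative}. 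Hence $\lambda=0$, and the identity reduces to $-A^T\mu \in N_C(\widehat{x})$, i.e. $\mu^T(Ax-b)\geq 0$ for all $x \in C$. If $\mu \neq 0$, using $0 \in \interior(\ell(C))$ one may choose $x\in C$ with $Ax-b = -\varepsilon\mu$ for small $\varepsilon>0$, yielding a contradiction; thus $\mu=0$ as well, contradicting nonzeroness of $(\lambda,\mu)$.

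For the converse \ref{lem: CQ_NNAMCQ_one} $\Rightarrow$ \ref{lem: CQ_mEMFCQ_one}, first derive $0 \in \interior(\ell(C))$: if $0$ lies on the boundary of the convex set $\ell(C)=AC-b$, a supporting hyperplane produces nonzero $\mu \in \R^p$ with $\mu^T(Ax-b)\geq 0$ for all $x \in C$, i.e.\ $-A^T\mu \in N_C(\widehat{x})$, so $(\lambda,\mu)=(0,\mu)$ is a nonzero abnormal multiplier, contradicting NNAMCQ. To produce the direction $d$, argue by contradiction and introduce the convex cone
\[P := \left\{(Ad,t) \in \R^p \times \R^{|I(\widehat{x})|} \ \middle| \ d \in T_C(\widehat{x}),\ t_i \geq c_i'(\widehat{x};d) \ \forall i \in I(\widehat{x})\right\},\]
whose convexity and conic structure follow from sublinearity of each $c_i'(\widehat{x};\cdot)$ and from $T_C(\widehat{x})$ being a convex cone. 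The nonexistence assumption states that $P$ is disjoint from $\{0\}\times(-\infty,0)^{|I(\widehat{x})|}$, so \cref{thm: separationTheorem} provides a nonzero $(\mu,\lambda) \in \R^p \times \R^{|I(\widehat{x})|}$ with $\mu^T Ad + \lambda^T t \geq 0$ on $P$; taking $(Ad,t)=(0,0)\in P$ and letting the second coordinate range over $(-\infty,0)^{|I(\widehat{x})|}$ forces $\lambda \geq 0$. Setting $t_i = c_i'(\widehat{x};d)$ and extending $\lambda$ by zero outside $I(\widehat{x})$ (preserving complementary slackness), using $c_i'(\widehat{x};\cdot) = \sigma_{\partial c_i(\widehat{x})}$ and writing $S := \sum_{i=1}^m \lambda_i \partial c_i(\widehat{x})$, the inequality becomes
\[(-A^T\mu)^T d \leq \sigma_S(d) \qquad \forall d \in T_C(\widehat{x}).\]
Since the support function of the cone $N_C(\widehat{x})$ is the $\{0,+\infty\}$-indicator of $T_C(\widehat{x})$ by \eqref{eq: NormalToTangentCone}, this extends to $(-A^T\mu)^T d \leq \sigma_{S+N_C(\widehat{x})}(d)$ on all of $\R^n$. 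Compactness of $S$ (by \cref{thm: cvxSubdiff_localBoundedClosed}\ref{thm: cvxSubdiffLocalBounded}) together with closedness of $N_C(\widehat{x})$ makes $S+N_C(\widehat{x})$ closed convex, so $-A^T\mu \in S + N_C(\widehat{x})$, producing a nonzero abnormal multiplier and contradicting NNAMCQ.

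The main obstacle will be the last step: identifying the appropriate convex cone $P$ amenable to separation and translating the resulting linear functional back into an abnormal multiplier via support function calculus. The key technical ingredient is the closedness of the sum $S+N_C(\widehat{x})$, which rests on compactness of each $\partial c_i(\widehat{x})$ and allows the pointwise inequality to promote $-A^T\mu$ into an actual element of $S+N_C(\widehat{x})$.
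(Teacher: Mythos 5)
Your proof is correct and follows essentially the same route as the paper's: the forward implication pairs the abnormal-multiplier identity with the mEMFCQ direction $d$ and then uses $0\in\interior\left(\ell(C)\right)$ to force $\mu=0$, while the converse separates a convex set built from $\left(Ad,\,c_i'\left(\widehat{x};d\right)\right)$ over $T_C\left(\widehat{x}\right)$ from the origin (resp.\ the negative orthant fiber) and converts the separating functional into an abnormal multiplier via~\eqref{eq: NormalToTangentCone}. The only notable difference is that where the paper simply asserts the existence of fixed subgradients $\omega_i\in\partial c_i\left(\widehat{x}\right)$ making the separated inequality hold uniformly in $d$, you justify this step explicitly through the closedness of $\sum_i\lambda_i\partial c_i\left(\widehat{x}\right)+N_C\left(\widehat{x}\right)$ and the support-function characterization of membership, which is a slightly more careful rendering of the same idea.
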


\begin{lemma}\label{lem: CQs}
    The subsequent assertions are equivalent for~\eqref{eq: problemFormulation}: 
    \begin{enumerate} 
        \item \label{lem: CQ_mSCQ} mSCQ is satisfied for~\eqref{eq: problemFormulation}.
        \item \label{lem: CQ_mEMFCQ_NNAMCQ_all} mEMFCQ or, equivalently, NNAMCQ holds in all feasible points of~\eqref{eq: problemFormulation}.
    \end{enumerate}
\end{lemma}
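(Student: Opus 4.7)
The plan is to prove the two implications separately, with \ref{lem: CQ_mSCQ}$\Rightarrow$\ref{lem: CQ_mEMFCQ_NNAMCQ_all} being essentially direct and the converse relying on a separation argument; throughout I would implicitly assume $X \neq \emptyset$, since otherwise mSCQ fails trivially while \ref{lem: CQ_mEMFCQ_NNAMCQ_all} becomes vacuous.

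For \ref{lem: CQ_mSCQ}$\Rightarrow$\ref{lem: CQ_mEMFCQ_NNAMCQ_all}, let $\widetilde{x} \in C$ be a Slater point and let $\widehat{x} \in X$ be arbitrary. The condition $0 \in \interior(\ell(C))$ carries over verbatim, so only an mEMFCQ-direction at $\widehat{x}$ has to be exhibited. The natural candidate is $d := \widetilde{x} - \widehat{x}$: convexity of $C$ places the segment $(1-t)\widehat{x} + t\widetilde{x}$ in $C$ for $t \in [0,1]$, hence $d \in T_C(\widehat{x})$; linearity of $A$ yields $Ad = A\widetilde{x} - A\widehat{x} = 0$; and for $i \in I(\widehat{x})$, convexity of $c_i$ gives $c_i(\widehat{x} + td) \leq (1-t) c_i(\widehat{x}) + t c_i(\widetilde{x}) = t c_i(\widetilde{x})$ for $t \in [0,1]$, so dividing by $t$ and passing to the limit $t \downarrow 0$ produces $c_i'(\widehat{x}; d) \leq c_i(\widetilde{x}) < 0$.

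For \ref{lem: CQ_mEMFCQ_NNAMCQ_all}$\Rightarrow$\ref{lem: CQ_mSCQ} I would argue by contraposition. If $0 \notin \interior(\ell(C))$, then mEMFCQ and hence (by \cref{lem: CQs_stepOne}) NNAMCQ fail at every feasible point. Otherwise, the only way mSCQ can fail is that no $x \in C$ with $Ax = b$ satisfies $c(x) < 0$. I would then introduce the convex set
\[ T := \left\{ (y,z) \in \R^m \times \R^p \ \middle| \ \exists x \in C: c(x) \leq y, \ Ax - b = z \right\}, \]
which is convex by componentwise convexity of $c$ and affineness of $\ell$, and which is disjoint from the nonempty convex set $\left\{ y \in \R^m \ \middle| \ y < 0 \right\} \times \{0\}$ exactly under this assumption. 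Applying \cref{thm: separationTheorem} produces a nonzero $(\lambda, \mu) \in \R^m \times \R^p$ separating these two sets. Letting components of $y$ tend to $-\infty$ in the separating inequality forces $\lambda \geq 0$, and sending them to $0^-$ reduces the inequality to $\lambda^T c(x) + \mu^T(Ax - b) \geq 0$ for all $x \in C$. Plugging in any feasible $\widehat{x}$ (and combining with $\lambda \geq 0$, $c(\widehat{x}) \leq 0$) yields the complementarity $\lambda^T c(\widehat{x}) = 0$, which identifies $\widehat{x}$ as a minimizer over $C$ of the convex function $\phi(x) := \lambda^T c(x) + \mu^T(Ax - b)$. \cref{thm: cvx_OptCond} together with the Moreau-Rockafellar sum rule (applicable since the $c_i$ are real-valued) then yields
\[ 0 \in \left\{ A^T \mu \right\} + \sum_{i=1}^m \lambda_i \partial c_i(\widehat{x}) + N_C(\widehat{x}), \]
so $(\lambda, \mu) \neq 0$ certifies the failure of NNAMCQ at $\widehat{x}$.

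I expect the main obstacle to lie in the backward direction, specifically in identifying the correct pair of convex sets to separate and in the sign analysis extracting $\lambda \geq 0$ from the separating functional. A convenient feature of the construction is that the resulting $(\lambda, \mu)$ depends only on the problem data rather than on the feasible point, so the same multiplier simultaneously refutes NNAMCQ at every $\widehat{x} \in X$; this is what allows the \emph{pointwise} statement \ref{lem: CQ_mEMFCQ_NNAMCQ_all} to be derived from a single global separation argument.
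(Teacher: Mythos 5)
Your proposal is correct and follows essentially the same route as the paper: the forward direction uses the direction $d=\widetilde{x}-\widehat{x}$ exactly as in the paper (your convexity-quotient derivation of $c_i'(\widehat{x};d)<0$ is equivalent to the paper's subgradient-inequality argument), and the backward direction is the same separation argument, merely separating the open negative orthant $\times\{0\}$ from your set $T$ instead of separating $\{0\}$ from the paper's set $U$ (which absorbs the slack $s>0$ into its definition). Your explicit remark that the multiplier $(\lambda,\mu)$ is independent of the feasible point, and your flagging of the $X=\emptyset$ edge case, are both consistent with (and slightly more careful than) the paper's write-up.
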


Now we are in position to prove our final convergence result.

\begin{theorem}
    Assume that $\left\{x^k\right\}_{k \in \N_0}$ remains bounded and the feasible set of~\eqref{eq: problemFormulation} is nonempty. Then, the following statements hold:
    \begin{enumerate}
        \item \label{thm: convergence_bounded} If $\left\{\sigma_k \right\}_{k \in \N_0}$ (or, equivalently, $ \left\{\rho_k \right\}_{k \in \N_0}$) remains bounded, then, for each accumulation point $x^\ast $ of $\left\{x^k\right\}_{k \in \N_0}$ there exist $\lambda^\ast, \ \mu^\ast$ such that $\left(x^\ast, \lambda^\ast,\mu^\ast\right)$ is a generalized KKT~point of~\eqref{eq: problemFormulation}. In particular, there exists $\overline{K} \subseteq \N_{0}$ such that for $k \rightarrow_{\overline{K}} \infty$ it holds \mbox{ $\left(x^k,\lambda^k,\mu^k \right) \rightarrow_{\overline{K}} \left(x^\ast, \lambda^\ast,\mu^\ast\right)$}.
        \item \label{thm: convergence_unbounded} If $ \left\{\sigma_k \right\}_{k \in \N_0}$ (or, equivalently, $ \left\{\rho_k \right\}_{k \in \N_0}$) goes to infinity, then, there exists an accumulation point~$x^\ast$ of $\left\{x^k\right\}_{k \in \N_0}$ which is feasible and, provided that mSCQ holds for~\eqref{eq: problemFormulation}, together with some $\lambda^\ast, \ \mu^\ast$ satisfies the generalized KKT~conditions for~\eqref{eq: problemFormulation}. In particular, there exists $\overline{K} \subseteq \N_{0}$ such that $\left(x^k,\lambda^k,\mu^k \right) \rightarrow_{\overline{K}} \left(x^\ast, \lambda^\ast,\mu^\ast\right)$.
    \end{enumerate}
\end{theorem}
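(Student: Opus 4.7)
The plan is to treat the two parts separately; in each case I would extract a suitable convergent subsequence, pass to the limit in the optimality condition~\eqref{eq: optCond_subprob} coming from step~\ref{alg: subprob}, and then verify feasibility and complementary slackness.

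For part~(i), the starting point is \cref{lem: proximalGrowth}: boundedness of $\{\sigma_k\}$ forces step~\ref{alg: feasibilityCheck_unsucc} to be entered only finitely often, so that from some index $k_0$ onward step~\ref{alg: feasibilityCheck_succ} always applies and the parameters $\sigma_k,\rho_k,\epsilon_k$ are eventually constant. Iterating the two contractive estimates of~\ref{alg: feasibilityCheck_succ} immediately yields $\|Ax^k-b\| \to 0$ and $\|\min\{-c(x^{k+1}), u^k/\rho_k\}\| \to 0$. Combined with \cref{lem: NearMonotonicity} and the boundedness of $\{x^k\},\{u^k\},\{v^k\},\{\rho_k\}$, the sequence $\{\ell_k\}$ is eventually monotonically decreasing and bounded from below, hence convergent; therefore $\|x^{k+1}-x^k\|_{Q_k} \to 0$ and, thanks to $\lambda_{\min}(Q_k)\ge\underline{\lambda}$, also $x^{k+1}-x^k \to 0$. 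From~\ref{alg: LagrangeMultiplier} one immediately obtains boundedness of $\{\mu^{k+1}\}$ and $\{\lambda^{k+1}\}$, while the local boundedness part of \cref{thm: cvxSubdiff_localBoundedClosed} bounds $\{s^k\}$ and all subgradients of $g$ and the $c_i$ that realize~\eqref{eq: optCond_subprob}. Extracting convergent subsequences of all these quantities and passing to the limit using the closed graph property of \cref{thm: cvxSubdiff_localBoundedClosed} together with~\eqref{eq: normalCone_Robustness} then yields the inclusion~\eqref{eqSys: KKT_optimality}. Finally, $c(x^\ast)\le 0$ and complementary slackness are read off from $\|\min\{-c(x^{k+1}), u^k/\rho_k\}\| \to 0$: any index $i$ with $c_i(x^\ast)<0$ forces $u_i^k/\rho_k \to 0$, hence (with $\rho_k$ eventually constant) $u_i^k \to 0$, and therefore $\lambda_i^{k+1}\to 0$.

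For part~(ii), the main obstacle is to exhibit a feasible accumulation point of $\{x^k\}$ and to control the blow-up of $\sigma_k$ and $\rho_k$ when passing to the limit. The plan is to divide~\eqref{eq: optCond_subprob} by $\rho_k$, observing that $\mu^{k+1}/\rho_k \to Ax^\ast - b$ and $\lambda^{k+1}/\rho_k \to \max\{0,c(x^\ast)\}$ along any subsequence with $x^k \to x^\ast$ (using boundedness of $\{u^k\},\{v^k\}$ from \cref{lem: auxiliarySequences}). The case analysis underlying the proof of \cref{lem: proximalGrowth} has to be revisited in order to select a subsequence on which $\sigma_k\|x^{k+1}-x^k\|/\rho_k$, and therefore the proximal remainder $\sigma_k Q_k(x^{k+1}-x^k)/\rho_k$, tends to zero; here the impossibility of case~B.II.2(b)(ii) established there, together with the relation $\rho_k=\sigma_k^\gamma$ with $\gamma\in(0,1)$, are essential. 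Passing to the limit and invoking mSCQ in its equivalent form NNAMCQ (\cref{lem: CQs_stepOne,lem: CQs}), the limiting inclusion can only be satisfied with $Ax^\ast=b$ and $\max\{0,c(x^\ast)\}=0$, which proves feasibility of $x^\ast$.

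With feasibility at hand, boundedness of $\{\mu^{k+1}\}$ and $\{\lambda^{k+1}\}$ along the subsequence is obtained by a standard normalization argument: were $\|(\mu^{k+1},\lambda^{k+1})\|\to\infty$, dividing~\eqref{eq: optCond_subprob} by this norm and letting $k\to\infty$ would produce a nonzero abnormal multiplier at the feasible point $x^\ast$, contradicting NNAMCQ. Once the multipliers are bounded, one extracts a further subsequence along which $(x^k,\lambda^k,\mu^k)\to(x^\ast,\lambda^\ast,\mu^\ast)$ and passes to the limit in~\eqref{eq: optCond_subprob} exactly as in part~(i), yielding the full KKT system~\eqref{eq: KKT}; complementary slackness follows again from the structure $\lambda^{k+1}=\max\{0,u^k+\rho_k c(x^{k+1})\}$ together with the feasibility of $x^\ast$. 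The most delicate step of the entire proof is therefore the subsequence selection in part~(ii), which ties together the careful design of the $\sigma_k$-, $\rho_k$- and $\epsilon_k$-updates in~\ref{alg: feasibilityCheck_unsucc}.
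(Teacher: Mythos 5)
Your part~(i) matches the paper's argument and is fine. In part~(ii), however, your feasibility step contains a genuine error: you claim that, after dividing~\eqref{eq: optCond_subprob} by $\rho_k$ and passing to the limit, ``invoking mSCQ in its equivalent form NNAMCQ, the limiting inclusion can only be satisfied with $Ax^\ast=b$ and $\max\{0,c(x^\ast)\}=0$.'' This does not work. NNAMCQ (\cref{def: CQs}) is a condition \emph{at feasible points}, and the abnormal multiplier it excludes must satisfy $\lambda^T c(\widehat{x})=0$; the candidate pair $\left(\max\{0,c(\widehat{x})\},A\widehat{x}-b\right)$ at an infeasible limit violates exactly this complementarity, so NNAMCQ says nothing about it, and \cref{lem: CQs} only guarantees NNAMCQ at feasible points anyway. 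Moreover, your route would make the existence of a feasible accumulation point contingent on mSCQ, whereas the theorem asserts it unconditionally. The paper's argument is different and CQ-free: the limiting inclusion is the first-order condition of the \emph{convex} problem $\min_{x\in C}\frac{1}{2}\|Ax-b\|^2+\frac{1}{2}\|\max\{0,c(x)\}\|^2$, hence $\widehat{x}$ is a global minimizer of the infeasibility measure, and since the feasible set is nonempty the optimal value is zero, forcing feasibility. NNAMCQ enters only afterwards, in the normalization argument for the multipliers, which you do describe correctly.

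A second, smaller gap: you never establish that $\|x^{k+1}-x^k\|\to 0$ for the \emph{full} sequence in the unbounded case (the paper's Step~1, obtained by dividing~\eqref{eq: optCond_subprob} by $\sigma_k$ and using $\rho_k/\sigma_k=\sigma_k^{\gamma-1}\to 0$ together with positive definiteness of the limit of $Q_k$). Without it, the subsequences of $\{x^k\}$ and $\{x^{k+1}\}$ you extract need not share the same limit, so $s^k\in\partial h(x^k)$ need not converge to an element of $\partial h(x^\ast)$, and the KKT inclusion at $x^\ast$ breaks down; this fact is also needed to trigger the case analysis of \cref{lem: proximalGrowth} that delivers the subsequence on which $\sigma_k Q_k(x^{k+1}-x^k)\to 0$ (note: the unscaled quantity, not merely its quotient by $\rho_k$, is what you need for the final passage to the limit).
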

\begin{proof}
    \ref*{thm: convergence_bounded} First note that, according to \cref{lem: proximalGrowth}, $\left\{\sigma_k \right\}_{k \in \N_0}$ remains bounded corresponds to~\ref{alg: feasibilityCheck_unsucc} being entered only finitely many times. Hence, there exists some index $ \kappa \in \N_0$ from which on only \ref{alg: feasibilityCheck_succ} occurs. In particular, this yields
    \begin{equation} \label{eq: constantPenaltyProximal}
        \sigma_k=\sigma_{\kappa}, \quad \rho_k = \rho_{\kappa} \quad \forall k \geq \kappa
    \end{equation}
    and, noting that $\theta \in (0,1)$,
    \begin{equation} \label{eqSys: limitFeasibility}
        Ax^k-b \rightarrow 0, \qquad
        \min \left\{ -c \left(x^{k+1}\right),\frac{u^k}{\rho_k} \right\} \rightarrow 0
    \end{equation}
    as $k \rightarrow \infty$.
    Now, let $x^\ast$ denote an arbitrary accumulation point of $\left\{x^k\right\}_{k \in \N_0}$ and $\overline{K}\subseteq \N_0$ an index set with $x^{k+1} \rightarrow_{\overline{K}} x^\ast$. 
    Since $x^k \in C$ for all $k$, we obtain $x^\ast \in C$ by the assumed closedness of the set $C$. Furthermore, since $\left\{u^k\right\}_{k \in \N_0}$ is bounded according to \cref{lem: auxiliarySequences}, we may assume without loss of generality the convergence of the subsequence $\left\{u^k\right\}_{k \in \overline{K}}$ to some $u^\ast \in \R^m_+$ which inherits the non-negativity from the members of the sequence. Hence, taking the limit $k \rightarrow_{\overline{K}} \infty$ of $\left\{ Ax^{k+1}-b \right\}$ and $ \min \left\{ - c \left(x^{k+1} \right), \frac{u^k}{\rho_k} \right\}$ and using~\eqref{eqSys: limitFeasibility} as well as~\eqref{eq: constantPenaltyProximal} yields both feasibility of $x^\ast$ and complementary slackness of $\left( x^\ast, \frac{u^\ast}{\rho_{\kappa}} \right)$ with $\frac{u^\ast}{\rho_{\kappa}}\geq 0$. This implies that $\left( x^\ast, u^\ast \right)$ satisfies complementary slackness as well.

    Similarly, by the boundedness of $\left\{v^k \right\}_{k \in \N_0}$ due to \cref{lem: auxiliarySequences}, we may assume without loss of generality that $v^k \rightarrow_{\overline{K}} v^\ast$ for some $v^\ast \in \R^p$. To verify that $\left(x^\ast, u^\ast, v^\ast \right)$ is a generalized KKT~point of~\eqref{eq: problemFormulation}, it remains to show~\eqref{eqSys: KKT_optimality}. To this end, we exploit \cref{lem: NearMonotonicity} which, together with~\eqref{eq: constantPenaltyProximal}, demonstrates
    \begin{equation} \label{eq: monotonicityEllWithProx}
        \ell_k \geq \ell_{k+1} + \frac{\sigma_{\kappa}}{2} \| x^{k+1} - x^k \|_{Q_k}^2 \quad \forall k > \kappa
    \end{equation}
    with $\ell_k, \ k \in \N,$ defined in the said lemma.
    Noting that the update formula for $\lambda^{k+1}$ in~\ref{alg: LagrangeMultiplier} can be written as
    \[ \lambda^{k+1} = u^k - \rho_k \min \left\{-c \left(x^{k+1}\right),\frac{u^k}{\rho_k} \right\}, \]
    taking the limit $k \rightarrow_{\overline{K}} \infty$ for both $\lambda^{k+1}$ and $\mu^{k+1}$ in~\ref{alg: LagrangeMultiplier} gives $\mu^{k+1} \rightarrow_{\overline{K}} v^\ast$ and $ \lambda^{k+1} \rightarrow_{\overline{K}} \nolinebreak u^\ast $, respectively, by using~\eqref{eqSys: limitFeasibility} and~\eqref{eq: constantPenaltyProximal}. Consequently, the monotonically decreasing sequence $\left\{ \ell_k \right\}_{k > \kappa}$ is convergent on a subsequence. Therefore, the whole sequence $\left\{ \ell_k \right\}_{k \in \N}$ converges. 
    This together with~\eqref{eq: monotonicityEllWithProx} implies $ \frac{\sigma_k}{2}\| x^{k+1}-x^k \|_{Q_k}^2 \rightarrow 0$ for $k \rightarrow \infty$. 
    Since
    \[ \frac{\sigma_k}{2} \| x^{k+1}-x^k \|_{Q_k}^2 \geq \frac{\sigma_{\kappa}}{2} \lambda_{\min} \left(Q_k \right) \| x^{k+1}-x^k \|^2 \geq \frac{\sigma_{\kappa}}{2} \underline{\lambda} \| x^{k+1}-x^k \|^2 \qquad \forall k \geq \kappa \]
    due to~\eqref{eq: constantPenaltyProximal} and our assumptions on the sequence $ \left\{Q_k \right\}_{k \in \N_0}$, we obtain $\| x^{k+1}-x^k \| \rightarrow \nolinebreak 0$ for $k \rightarrow \infty$. Thus, both $\left\{x^{k+1} \right\}_{k \in \overline{K}}$ and $\left\{x^k \right\}_{k \in \overline{K}}$ converge to $x^\ast$. 
    As $x^{k+1}$ is the (unique) minimizer of \linebreak $L^{(k)}_{\rho_k} \left( \cdot, u^k,v^k \right) + \nolinebreak \frac{\sigma_k}{2} \| \cdot - x^k\|^2_{Q_k}$ over the convex set $C$, the optimality condition from \cref{thm: cvx_OptCond} together with the calculus rules from \cref{thm: cvxSubdiff_calculus} and the multiplier updates from~\ref{alg: LagrangeMultiplier} give
    \begin{align}
        0 \in & \, \partial \left( L_{\rho_k}^{(k)} \left(\cdot,u^k,v^k\right) + \frac{\sigma_k}{2} \| \cdot - x^k \|_{Q_k}^2 \right) \left(x^{k+1} \right) + N_C \left(x^{k+1} \right) \nonumber \\
        =& \, \partial g \left(x^{k+1} \right) + \left\{-s^k + A^T \left(v^k+\rho_k \left(Ax^{k+1}-b \right)\right) \right\} + \sum_{i=1}^m \max \left\{ 0, u_i^k + \rho_k c_i\left(x^{k+1}\right) \right\} \partial c_i \left( x^{k+1} \right)\nonumber \\
        & + \left\{ \sigma_k Q_k \left( x^{k+1}-x^k \right) \right\} + N_C \left( x^{k+1} \right) \label{eq: optCondSubproblem_long}\\
        =& \, \partial g \left(x^{k+1} \right) + \left\{-s^k+A^T\mu^{k+1} \right\} + \sum_{i=1}^m \lambda_i^{k+1} \partial c_i \left( x^{k+1} \right) + \left\{ \sigma_k Q_k \left(x^{k+1}-x^k \right) \right\} + N_C \left(x^{k+1} \right). \label{eq: optcondSubproblem}
    \end{align}
    Consequently, there exist $t^{k+1} \in \partial g \left(x^{k+1}\right)$ and $ \omega_i^{k+1} \in \partial c_i \left(x^{k+1} \right)$ for each $i=1,\dots,m$ and $k \in \N_0$ with
    \begin{equation} \label{eq: optCond_Cone}
        - \bigg( t^{k+1}-s^k + A^T \mu^{k+1} + \sum_{i=1}^m \lambda_i^{k+1} \omega_i^{k+1} + \sigma_k Q_k \left( x^{k+1}-x^k \right) \bigg) \in N_C \left(x^{k+1} \right).
    \end{equation}
    Due to the local boundedness of the convex subdifferential (cf.~\cref{thm: cvxSubdiff_localBoundedClosed}\ref{thm: cvxSubdiffLocalBounded}) in combination with the assumed boundedness of the sequence $\left\{x^k\right\}_{k \in \N_0}$, the sequences $\left\{t^{k+1} \right\}_{k \in \N_0}, \ \left\{s^k \right\}_{k \in \N_0}, \ \left\{\omega_i^{k+1} \right\}_{k \in \N_0}, \ \linebreak[2] i=\nolinebreak 1,\dots,m,$ are bounded. Therefore, without loss of generality one may assume convergence of each sequence on the corresponding subsequence with indices $k \in \overline{K}$ to some limits $t^\ast \in \partial g \left(x^\ast \right), \ \linebreak[3] s^\ast \in \nolinebreak \partial h \left(x^\ast\right) \linebreak[3]$ and $\omega_i^\ast \in \partial c_i \left(x^\ast \right), \ i=1,\dots, m$, respectively. Thereby, the inclusions are guaranteed by the closedness of the graph of the convex subdifferential (cf.~\cref{thm: cvxSubdiff_localBoundedClosed}\ref{thm: cvxSubdiffClosedGraph}) and convergence of $\left\{x^{k+1}\right\}_{k \in \overline{K}}$ and $\left\{x^k \right\}_{k \in \overline{K}}$ to $x^\ast$. 
    Note that $\| x^{k+1}-x^k\| \rightarrow 0$ for $k \rightarrow \infty$ implies also $\sigma_k Q_k \left(x^{k+1}-x^k \right) \rightarrow 0$ since one has
    \begin{align*}
        \sigma_k \| Q_k \left(x^{k+1}-x^k \right) \| & \leq \sigma_{\kappa} \| Q_k \|  \| x^{k+1}-x^k \| \\
        & = \sigma_{\kappa} \lambda_{\max} \left(Q_k \right) \| x^{k+1}-x^k \| \\
        & \leq \sigma_{\kappa} \overline{\lambda} \| x^{k+1}-x^k \|
    \end{align*}
    for all $k \geq \kappa$.
    Hence, taking the limit $k \rightarrow_{\overline{K}} \infty$ in~\eqref{eq: optCond_Cone} and exploiting the robustness~\eqref{eq: normalCone_Robustness} of the normal cone from convex analysis, we obtain
    \[ - \bigg( t^\ast -s^\ast + A^T v^\ast + \sum_{i=1}^m u_i^\ast \omega_i^\ast \bigg) \in N_C \left(x^\ast \right), \]
    hence,
    \[ 0 \in \partial g \left(x^\ast\right) - \partial h \left(x^\ast\right) + \left\{A^T v^\ast\right\} + \sum_{i=1}^m u_i^\ast \partial c_i \left(x^\ast\right) + N_C \left(x^\ast\right). \]
    Thus, the claim follows with $\lambda^\ast := u^\ast$ and $\mu^\ast := v^\ast$.\\

    \ref*{thm: convergence_unbounded} Recall from \cref{lem: proximalGrowth} that $\left\{\sigma_k \right\}_{k \in \N_0}$ tends to infinity if and only if \ref{alg: feasibilityCheck_unsucc} occurs infinitely many times.

    \emph{Step 1}: First, we show that the full sequence $ \left\{ \| x^{k+1} - x^k \| \right\}_{k \in \N_0}$ converges to zero. Since $\left\{x^k \right\}_{k \in \N_0}$ is bounded, $\left\{\| x^{k+1} - x^k \| \right\}_{k \in \N_0}$ is also bounded. Hence, it suffices to show that each convergent subsequence of $\left\{\| x^{k+1} - x^k \| \right\}_{k \in \N_0}$ converges to zero. Therefore, choose an arbitrary convergent subsequence $\left\{\| x^{k+1} - x^k \| \right\}_{k \in \overline{K}}, \ \linebreak[2] \overline{K} \subseteq \nolinebreak[3] \N_0$, and assume without loss of generality that also $\left\{x^{k+1} \right\}_{k \in \overline{K}}$ and $\left\{x^k\right\}_{k \in \overline{K}}$ converge to some limits $x^\ast \in \R^n$ and $\widetilde{x}^\ast \in \R^n$, respectively.
    Actually, one even has $x^\ast, \widetilde{x}^\ast \in C$ since each iterate $x^k$ belongs to the set $C$ by construction and $C$ is assumed to be closed.  
    By~\eqref{eq: optCondSubproblem_long}, for each $k \in \N_0$, there exist subgradients $t^{k+1} \in \partial g \left( x^{k+1} \right), \  \omega_i^{k+1} \in \nolinebreak \partial c_i \left(x^{k+1} \right)$ for all $i=1,\dots, m$ such that
    \begin{equation}
        \begin{aligned}[t]
            - & \bigg(  t^{k+1}-s^k+A^T \left(v^k+\rho_k \left(Ax^{k+1}-b \right) \right)+ \sum_{i=1}^m \max \left\{0,u_i^k+\rho_k c_i \left(x^{k+1} \right) \right\} \omega_i^{k+1}  \\
            &\quad + \sigma_k Q_k \left(x^{k+1}-x^k \right) \bigg) \in N_C \left(x^{k+1} \right).\label{eq: optCondSubrpoblem_longVec}
        \end{aligned}
    \end{equation}
    As $N_C \left(x^{k+1} \right)$ is a cone, we also have 
    \begin{equation} \label{eq: optCond_scaledWithSigma}
        \begin{aligned}[t]
            - \frac{1}{\sigma_k} & \bigg(  t^{k+1}-s^k+A^T \left(v^k+\rho_k \left(Ax^{k+1}-b \right) \right)+ \sum_{i=1}^m \max \left\{0,u_i^k+\rho_k c_i \left(x^{k+1} \right) \right\} \omega_i^{k+1}  \\
            & \quad + \sigma_k Q_k \left(x^{k+1}-x^k \right) \bigg) \in N_C \left(x^{k+1} \right).
        \end{aligned}
    \end{equation}
    Since $\left\{Q_k \right\}_{k \in \N_0}$ is bounded by assumption, we may assume without loss of generality convergence of the subsequence $\left\{ Q_k \right\}_{k \in \overline{K}}$ to some symmetric positive definite matrix $Q^\ast \in \R^ {n \times n }$.
    Taking the limit $k \rightarrow_{\overline{K}} \infty$ in~\eqref{eq: optCond_scaledWithSigma}, we get
    \[ Q^\ast \left( \widetilde{x}^\ast - x^\ast \right) \in N_C \left(x^\ast \right) \]
    exploiting that $\left\{\sigma_k \right\}_{k \in \N_0}$ itself tends to infinity, $\left\{t^{k+1} \right\}_{k \in \N_0}, \ \left\{s^k\right\}_{k \in \N_0}, \ \left\{\omega_i^{k+1} \right\}_{k \in \N_0}, \linebreak[3] \ i= \nolinebreak 1,\dots,m,$ are bounded due to the boundedness of $\left\{x^k\right\}_{k \in \N_0}$ and the local boundedness property of the convex subdifferential, $\left\{u^k\right\}_{k \in \N_0}, \ \linebreak[3] \left\{v^k \right\}_{k \in \N_0}$ are bounded by \cref{lem: auxiliarySequences}, $\frac{\rho_k}{\sigma_k}= \left( \sigma_k \right)^{ \gamma-1}$ with $\gamma<1$ and hence tends to zero as $\left\{\sigma_k \right\}_{k \in \N_0}$ goes to infinity, $c_i$ is continuous for each $i=1,\dots,m$, and last but not least the normal cone from convex analysis is robust.
    The definition of the convex normal cone yields $ \left( Q^\ast \left( \widetilde{x}^\ast - x^\ast \right)\right)^T \left(z-x^\ast\right) \leq 0$ for all $z \in C$. Taking $z:=\widetilde{x}^\ast \in C$ and exploiting the fact that $Q^\ast$ is (symmetric) positive definite, $x^\ast = \widetilde{x}^\ast$ follows.
    Consequently, we have $ \| x^{k+1}-x^k \| \rightarrow_{\overline{K}} 0$.

    \emph{Step 2}: Next, we deduce that 
    \begin{equation} \label{eq: proofConvergence_LambdaSubsequence}
        \sigma_k Q_k \left(x^{k+1}-x^k \right) \rightarrow 0
    \end{equation} 
    holds at least on a subsequence. 
    By Step~1, we know that $\| x^{k+1}-x^k \| \rightarrow 0$ for $k \rightarrow \infty$.
    From the proof of \cref{lem: proximalGrowth}, however, we see that 
    $\| x^{k+1}-x^k \|^{\alpha} \rightarrow 0$
    can occur only if we have $x^{k+1}=x^k$ for an infinite number of indices or $\epsilon_k \rightarrow 0$. (As for each distinction of case~B.II), which can occur, the sequence $\left\{  \| x^{k+1}-x^k \|^{\alpha} \right\}_{k \in \N_0}$ would be bounded away from zero (at least on a subsequence).)
    In the first case, that is subsequent iterates coincide infinitely many times, \eqref{eq: proofConvergence_LambdaSubsequence} trivially holds true for the respective subsequence.
    Thus, let us consider the second case.
    Since $\epsilon_k$ is necessarily reduced infinitely many times and each decrease involves the counter~$I_1$ reaching or exceeding $N$ after being reset to zero, we can extract a subsequence such that $\sigma_{k+1}=\frac{1}{\| x^{k+1}-x^k \|^{\alpha}}$ holds. Let us denote the corresponding infinite index set by $\Lambda'$. Then, for each $k \in \Lambda'$ one has
    \begin{align*}
        \sigma_k \| Q_k \left( x^{k+1}-x^k \right) \| &\leq \sigma_{k+1} \| Q_k \| \| x^{k+1}-x^k \|\\
        & = \lambda_{\max} \left(Q_k \right) \| x^{k+1}-x^k \|^{1-\alpha}\\
        & \leq \overline{\lambda} \| x^{k+1}-x^k \|^{1- \alpha}, 
    \end{align*}
    where we additionally exploit the monotonicity of $\left\{\sigma_k \right\}_{k \in \N_0}$ as well as our postulations for $\left\{Q_k \right\}_{k \in \N_0}$ in~\ref{alg: initialisation}.
    Hence, taking advantage of $\| x^{k+1}-x^k \|\rightarrow 0$ together with $\alpha \in (0,1)$ gives
    $ \sigma_k Q_k \left( x^{k+1}-x^k\right) \rightarrow_{\Lambda'}\nolinebreak0$.

    In the following, let
    \[ \Lambda \in \left\{ S \subseteq \N \ \middle| \ \eqref{eq: proofConvergence_LambdaSubsequence} \text{ holds for } k \rightarrow_S \infty \right\}. \]

    \emph{Step 3}:
    Here, we prove the existence of a feasible accumulation point of $\left\{x^k\right\}_{k \in \N_0}$. 
    Since $\left\{x^k\right\}_{k \in \N_0}$ is bounded, we may assume without loss of generality that $x^{k+1} \rightarrow_{\Lambda} \nolinebreak \widehat{x}$ for some $\widehat{x} \in \R^n$. Note that $\widehat{x} \in C$ since $C$ is a closed set and $x^k \in C $ holds for all $k \in \N_0$. Moreover, $\left\{x^k\right\}_{k \in \Lambda}$ also converges to $\widehat{x}$ since $\| x^{k+1}-x^k \| \rightarrow 0$. By~\eqref{eq: optCondSubrpoblem_longVec} and $N_C \left(x^{k+1} \right)$ being a cone, we obtain
    \begin{align*}
        - \frac{1}{\rho_k} & \bigg(  t^{k+1}-s^k+A^T \left(v^k+\rho_k \left(Ax^{k+1}-b \right) \right)+ \sum_{i=1}^m \max \left\{0,u_i^k+\rho_k c_i \left(x^{k+1} \right) \right\} \omega_i^{k+1}\\
        & \quad + \sigma_k Q_k \left(x^{k+1}-x^k \right) \bigg) \in N_C \left(x^{k+1} \right). 
    \end{align*}
    Utilizing the previously discussed boundedness of $\left\{w_i^{k+1} \right\}_{k \in \N_0}$ for each $i=1,\dots, m$, we may again assume without loss of generality the convergence of $ \left\{\omega_i^{k+1} \right\}_{k \in \Lambda}$ to some $\widehat{\omega}_i \in \partial c_i \left( \widehat{x} \right)$ for each $i=1,\dots,m$ by the closedness property of the convex subdifferential.
    Now, taking the limit $k \rightarrow_{\Lambda} \infty$ and exploiting the facts that $\rho_k \rightarrow \infty$ according to \cref{lem: proximalGrowth} due to the assumed unboundedness of $\left\{\sigma_k \right\}_{k \in \N_0}$ and $\sigma_k Q_k \left( x^{k+1}-x^k \right) \rightarrow_{\Lambda} 0$ by Step~2, the boundedness of $\left\{t^{k+1} \right\}_{k \in \N_0}, \ \left\{s^k \right\}_{k \in \N_0}, \ \linebreak[2] \left\{v^k\right\}_{k \in \N_0}, \ \left\{ u^k \right\}_{k \in \N_0}$, the continuity of each $c_i$, and the robustness of the normal cone, we obtain 
    \[ - \bigg( A^T \left(A\widehat{x}-b \right)+ \sum_{i=1}^m \max \left\{0,c_i \left(\widehat{x} \right) \right\} \widehat{\omega}_i \bigg) \in N_C \left( \widehat{x}\right). \] 
    But this shows
    \begin{align*}
        0 \in & \left\{ A^T \left(A \widehat{x}-b \right)\right\} + \sum_{i=1}^m \max \left\{ 0, c_i \left(  \widehat{x} \right) \right\} \partial
        c_i \left( \widehat{x} \right) + N_C \left( \widehat{x} \right) \\
        =  & \, \partial \left( \frac{1}{2} \| A \cdot - b \|^2 + \frac{1}{2} \| \max \left\{0, c ( \cdot ) \right\} \|^2 + \delta_C ( \cdot ) \right) \left( \widehat{x}\right) 
    \end{align*}
    where the equality follows from some elementary calculus rules for the convex subdifferential (cf.~\cref{thm: cvxSubdiff_calculus}) noting, in particular, that all functions $c_i$ are real-valued. However, due to \cref{thm: cvx_OptCond} this in turn is equivalent to
    \[ \widehat{x} \in \argmin_{x \in \R^n } \left\{ \frac{1}{2} \| Ax-b \|^2 + \frac{1}{2} \| \max \left\{0, c(x) \right\} \|^2+ \delta_C(x)\right\}. \]
    Since the feasible set of~\eqref{eq: problemFormulation} is nonempty, the minimal value of the objective function above is zero and hence, we then obtain feasibility of $\widehat{x}$ for~\eqref{eq: problemFormulation}.

    \emph{Step 4}:
    In the next step we will show that under the assumption of mSCQ the sequence $ \left\{\left(\lambda^{k+1}, \mu^{k+1} \right) \right\}_{k \in \Lambda}$ of Lagrange multipliers stays bounded. To this end, let us denote with $I\left( \widehat{x} \right) := \left\{ i \in \{1,\dots, m \} \ \middle| \ c_i \left( \widehat{x} \right) = 0 \right\}$ the active index set of $\widehat{x}$. Then, for each index $i \notin I \left( \widehat{x} \right)$  by feasibility of $\widehat{x}$, continuity of $c_i$ and $x^{k+1} \rightarrow_{\Lambda} \widehat{x}$, we have $c_i \left(x^{k+1} \right) < 0$ for all sufficiently large $k \in \Lambda$. Considering the update formula of $\lambda^{k+1}$ in~\ref{alg: LagrangeMultiplier} and taking into account that $\left\{u^k\right\}_{k \in \N_0}$ remains bounded according to \cref{lem: auxiliarySequences}, whereas $\left\{\rho_k \right\}_{k \in \N_0}$ tends to infinity, we obtain $\lambda_i^{k+1} = \nolinebreak 0$ for all sufficiently large $k \in \Lambda$ and $i \notin I \left( \widehat{x} \right)$. Hence, $\lambda_i^{k+1} \rightarrow_{\Lambda} 0=: \widehat{\lambda}_i$ follows for each $i \notin I \left( \widehat{x} \right)$. As a consequence, for all sufficiently large $k \in \Lambda$, \eqref{eq: optCondSubrpoblem_longVec} reduces to
    \begin{equation} \label{eq: optcondSubproblemVec}
        - \bigg( t^{k+1}-s^k + A^T \mu^{k+1}+ \sum_{i \in I \left(\widehat{x}\right)} \lambda_i^{k+1} \omega_i^{k+1} + \sigma_k Q_k \left(x^{k+1}-x^k \right) \bigg) \in N_C \left(x^{k+1} \right)
    \end{equation}
    using the definition of Lagrange multipliers. Now, assume, by contradiction, that the sequence \linebreak $\left\{ \left(\lambda_i^{k+1}, \mu_j^{k+1} \right)_{i \in I \left( \widehat{x} \right),j=1,\dots,p} \right\}_{ k \in \Lambda}$ is unbounded and at the same time mSCQ holds for~\eqref{eq: problemFormulation}. In view of \cref{lem: CQs} this is equivalent to $\widehat{x}$ satisfying NNAMCQ. 
    Without loss of generality, we may assume that $ \frac{\left(\lambda_{I \left(\widehat{x}\right)}^{k+1}, \mu^{k+1} \right)}{\| \left(\lambda_{I \left(\widehat{x}\right)}^{k+1}, \mu^{k+1} \right) \|} \rightarrow_{\Lambda} \nolinebreak \left(\widetilde{\lambda}_{I \left( \widehat{x}\right)}, \widetilde{\mu} \right) $ with $\left(\widetilde{\lambda}_{I \left(\widehat{x}\right)}, \widetilde{\mu} \right) \neq 0$ and $ \widetilde{\lambda}_i \geq 0$ for all $i \in I \left( \widehat{x} \right)$ by the (component-wise) non-negativity of the sequence $\left\{\lambda^{k+1} \right\}_{k \in \N_0}$.
    Setting $\widetilde{\lambda}_i=0$ for all $i \notin I \left( \widehat{x} \right)$, $\widetilde{\lambda}$ together with $  \widehat{x} $ satisfy the complementary slackness condition $\widetilde{\lambda}_i \geq 0, \ c_i \left( \widehat{x} \right) \leq 0$ and $\widetilde{\lambda}_i c_i \left( \widehat{x}\right)=0$ for all $i=1,\dots, m$.
    Exploiting once more that $N_C \left(x^{k+1}  \right)$ is a cone, \eqref{eq: optcondSubproblemVec} gives
    \[
        - \frac{1}{\Big\| \left( \lambda_{I \left(\widehat{x}\right)}^{k+1}, \mu^{k+1} \right) \Big\|}  \bigg( t^{k+1}-s^k + A^T \mu^{k+1}+ \sum_{i \in I \left(\widehat{x}\right)} \lambda_i^{k+1} \omega_i^{k+1}  + \sigma_k Q_k \left(x^{k+1}-x^k \right) \bigg) \in N_C \left(x^{k+1} \right)
    \]
    for all sufficiently large $k \in \Lambda$. Taking $k \rightarrow_{\Lambda} \infty$ yields
    \[ - \bigg(A^T  \widetilde{\mu}+ \sum_{i \in I \left(\widehat{x} \right)} \widetilde{\lambda}_i \widehat{\omega}_i \bigg) \in N_C \left( \widehat{x} \right) \] 
    where one utilizes that $ \left\{ \left( \lambda^{k+1}_{I \left( \widehat{x}\right)}, \mu^{k+1} \right)\right\}_{k \in \Lambda}$ is unbounded whereas, following our previous discussion, $ \left\{t^{k+1} \right\}_{k \in \N_0}$ and $ \left\{ s^k \right\}_{k \in \N_0}$ stay bounded, \mbox{$\omega_i^{k+1} \rightarrow_{\Lambda} \widehat{\omega} \in \nolinebreak \partial c_i \left( \widehat{x} \right) $} for all $i=1,\dots, m$, $\sigma_k Q_k \left(x^{k+1}-x^k \right) \rightarrow_{\Lambda} 0$ and the normal cone from convex analysis is robust. But as NNAMCQ is assumed to hold at $\widehat{x}$, this would imply $ \left( \widetilde{\lambda}_{I \left( \widehat{x} \right)}, \widetilde{\mu} \right)=0$, a contradiction. Hence, the boundedness of the sequence $ \left\{ \left( \lambda^{k+1}, \mu^{k+1} \right) \right\}_{k \in \Lambda}$ is proven.

    \emph{Step 5}:
    Finally, we prove that our accumulation point $\widehat{x}$ together with some suitable Lagrange multipliers is indeed a generalized KKT~point of~\eqref{eq: problemFormulation}. By Step~4 we may assume without loss of generality that the sequences $\left\{\lambda^{k+1}_{I \left( \widehat{x}\right)}\right\}_{k \in \Lambda}, \ \left\{\mu^{k+1}\right\}_{k \in \Lambda}$ converge to some $\widehat{\lambda}_{I \left( \widehat{x}\right)}$ and $\widehat{\mu}$, respectively, where one has $\widehat{\lambda}_{I \left( \widehat{x}\right)} \geq 0$ by the non-negativity of the sequence $\left\{\lambda^{k+1} \right\}_{k \in \N_0}$. Note that already $\lambda^{k+1}_i \rightarrow_{\Lambda} 0$ for every $i \notin I \left( \widehat{x} \right)$ has been shown which, together with already verified feasibility of $\widehat{x}$, shows the complementary slackness condition. As discussed previously, the sequences of subgradients $\left\{t^{k+1} \right\}_{k \in \N_0}, \ \left\{s^k\right\}_{k \in \N_0}$ are bounded and hence, one can assume once more without loss of generality the convergence of $\left\{t^{k+1} \right\}_{k \in \Lambda}$ to some $\widehat{t} \in \R^n$ and $\left\{s^k \right\}_{k \in \Lambda}$ to $\widehat{s} \in \R^n$, respectively. In fact, by the closedness of the graph of the convex subdifferential we have $\widehat{t} \in \partial g \left( \widehat{x} \right)$ and $\widehat{s} \in \partial h \left( \widehat{x} \right)$ as both, $\left\{x^{k+1} \right\}_{k \in \Lambda}$ as well as $\left\{x^k \right\}_{k \in \Lambda}$ converge to $\widehat{x}$ by Step~1. Now, taking the limit $k \rightarrow_{\Lambda} \infty$ in~\eqref{eq: optcondSubproblemVec} and exploiting again $\sigma_k Q_k \left( x^{k+1}-x^k \right) \rightarrow_{\Lambda} 0$ as well as the robustness of the normal cone, we get
    \[ - \bigg( \widehat{t} - \widehat{s}+ A^T \widehat{\mu} + \sum_{i=1}^m \widehat{\lambda}_i \widehat{\omega}_i \bigg) \in N_C \left( \widehat{x} \right),  \]
    which shows
    \[ \partial h \left( \widehat{x} \right) \cap \bigg( \partial g \left( \widehat{x} \right) + \left\{ A^T \widehat{\mu} \right\} + \sum_{i=1}^m \widehat{\lambda}_i \partial c_i \left( \widehat{x} \right) + N_C \left( \widehat{x} \right) \bigg) \neq \emptyset. \]
    Altogether, this shows that $\left( \widehat{x},\widehat{\lambda},\widehat{\mu}\right)$ is in fact a generalized KKT~point of~\eqref{eq: problemFormulation}.
\end{proof}

Note that the postulation of $\left\{x^k\right\}_{k \in \N_0}$ remaining bounded is not too strong in our context and automatically satisfied if $C$ is compact since, by construction, all iterates $x^k$ belong to $C$. Alternatively, note that we may add suitable lower and upper bounds in case $C$ is not compact.

\section{Numerical experiments and applications} \label{sec: numerics}
This section presents some numerical results for our \cref{alg: sALMDC} together with a comparison with existing methods. Since our method is designed for the solution of constrained DC~programs with both DC~components being nonsmooth, we concentrate on this class of DC~programs.
More precisely, we consider problems from location planning~\cite{Eilon1971,Chen1998,Tuy2016} as well as for sparse signal recovery~\cite{Yin2015}. For the latter one, we extend the approach presented in~\cite{Yin2015} by a modification of the problem which is based on the idea of~\cite{Gotoh2018} for promoting sparsity.

Comparisons of our method are carried out with the classical DC~Algorithm (DCA)~\cite{LeThi1996} and the proximal bundle method for DC~programming (PBMDC) presented in~\cite{deOliveira2019}. To apply DCA to problem~\eqref{eq: problemFormulation}, we consider the (formally) unconstrained reformulation
\[ \min_{x \in \R^n} \widetilde{f}(x):= \widetilde{g}(x)-h(x)\]
with $\widetilde{f}, \widetilde{g}:\R^n \to \overline{\R}, \ \widetilde{g}:=g+\delta_X$ and $X$ denoting the feasible set of~\eqref{eq: problemFormulation} given by~\eqref{eq: feasibleSet}. At iteration $k$ one replaces the concave part $-h$ by an affine majorant. Minimizing the resulting model function in order to determine the next iterate $x^{k+1}$ leads to the constrained optimization problem
\begin{equation} \label{eq: DCA_subprob}
    \min_{x \in \R^n} \left\{ g(x) - h\left(x^k \right) - \left( s^k\right)^T\left(x-x^k\right)\right\} \quad \text{s.t.} \quad Ax=b, \ c(x)\leq 0, \ x \in C,
\end{equation}
where $s^k \in \partial h \left(x^k \right)$ with current iterate $x^k$. Hence, the performance of the algorithm highly depends on whether there is an efficient numerical method for solving these constrained subproblems at hand. Moreover, in order to guarantee existence and uniqueness of a solution of~\eqref{eq: DCA_subprob}, one adds the same strongly convex term to both DC~components $g$ and $h$ before substituting the concave part. To be more precise, we use $\frac{1}{2}\Vert \cdot \Vert^2$ in what follows.
Hence, instead of~\eqref{eq: DCA_subprob}, we obtain
\begin{equation} \label{eq: DCA_subprobProx}
    \min_{x \in \R^n} \left\{ g(x) - h\left(x^k \right) - \left( s^k\right)^T\left(x-x^k\right) + \frac{1}{2} \| x-x^k \|^2\right\} \quad \text{s.t.} \quad Ax=b, \ c(x)\leq 0, \ x \in C
\end{equation}
(after adding some constant terms). The resulting method is known in the literature as a variant of DCA, called proximal linearized algorithm for difference of convex functions~\cite{deOliveira2019, Souza2015}.

On the other hand, PBMDC \cite{deOliveira2019} can be applied directly to problem~\eqref{eq: problemFormulation}. The key idea is to approximate, at each iteration $k$, not only the concave part $-h$ by an affine majorant, but also the first DC~component~$g$ by a cutting plane model. Adding a proximal term to the resulting model function leads again to a convex, constrained subproblem. A trial point gets computed as solution of this optimization problem and becomes the new iterate whenever a sufficient decrease in the objective value can be ensured. Otherwise, the bundle to construct the cutting plane model of~$g$ gets enriched while no update of the iterate is carried out. As a consequence, also the performance of PBMDC depends, in principle, on an efficient numerical method being available to solve the convex, constrained subproblems. However, in what follows, the feasible set always constitutes a closed, convex polytope and hence, can be characterized by finitely many equality and inequality restrictions. Thus, the subproblems occurring in PBMDC can be equivalently transformed into quadratic programs \cite{deOliveira2019}. For this class of optimization problems several established numerical solution methods are known. In particular, \textsc{matlab} provides with \texttt{quadprog}\footnote{see \url{https://de.mathworks.com/help/optim/ug/quadprog.html} \label{fn: quadprog}} a selection of such algorithms.

We have used the \textsc{matlab} implementation of PBMDC which is freely available by its developer at \url{https://www.oliveira.mat.br/solvers}. In addition, DCA as well as psALMDC have been implemented in \textsc{matlab}, too. PBMDC has been run with its default parameters, except switching the value for \texttt{pars.QPdual} from \texttt{true} to \texttt{false}, as otherwise the constraints had been disregarded. 
In addition, we have fixed a small bug in the implementation in order to avoid errors when reaching the maximum size of a bundle\footnote{To be precise, when extracting the relevant part of the Lagrange multiplier after solving the quadratic program with \texttt{quadprog}, line~308 should be corrected to \texttt{mu= mu(nineq+1:end)}.}.

For DCA the only parameter to choose is the termination tolerance which we have set to $10^{-3}$. That means, DCA terminates as soon as $ \| x^k-x^{k+1} \| \leq 10^{-3} $ is satisfied. As already mentioned, choosing a suitable routine for solving the subproblems highly depends on the underlying structure. Hence, we comment on this for each application separately in the corresponding subsections.

Regarding our solver psALMDC from \cref{alg: sALMDC}, we have chosen the parameters as $M:=20, \ \linebreak[2] N:=\nolinebreak 5, \ \alpha:=0.5, \ \beta:=0.9, \ \gamma:=0.9, \ \theta:=0.8$ and $ \overline{\eta}:=10$. Moreover, the termination criterion in~\ref{alg: termination} gets numerically realized by checking whether the conditions
\begin{equation} \label{eq: psALMDC_numericTermination}
    \sigma_k \| Q_k \left(x^{k+1}-x^k\right) \| \leq \delta_1, \quad \| Ax^{k+1}-b \| \leq \delta_2, \quad \Big\| \min \left\{-c \left(x^{k+1}\right),\lambda^{k+1} \right\} \Big\| \leq \delta_2
\end{equation}
are satisfied for some given tolerances $\delta_1, \delta_2 >0$ depending on the application. 
In our applications, the abstract set $C$ is always equal to $\R^n$. As a consequence, the subproblems in~\ref{alg: subprob} become unconstrained ones. To this class of optimization problems one can, in principle, apply bundle methods like those proposed in~\cite{Kanzow2002,Lemarechal2014,Kiwiel1983}.
However, in order to improve the efficiency of the overall algorithm, it seems advantageous to exploit the structure of the respective subproblems for selecting a suitable solution method. For more details, see the corresponding subsection.

All codes were run with \textsc{matlab} version R2024a executed on a computer with \linebreak \mbox{8xIntel\textregistered \(\text{Core}^{\text{TM}}\)i7-7700 CPU \(@\) 3.60 GHz processor}, 31.1~GiB RAM and an openSUSE Leap 15.6 (64-bit) system.  

\subsection{Location planning} \label{sec: locationPlanning}
The consideration of how to place certain facilities to serve the demand of participants whose locations are known emerges at many problems. While developing a wider area settlement the question may arise of how to place supply facilities for daily needs, like supermarkets, health care offices, educational institutions or department stores, in such a way that the sum of the most favorable distances from the residents' homes to the respective facilities or the resulting transportation costs gets minimized. Thereby, optimization can be carried out for example with respect to travel distance, time or total costs. 
The mathematical formulation leads to the following generalized or multisource Weber's problem~\cite{Tuy2016,Chen1998}. Given $N$ points $a^j \in \R^2, \ j=1, \dots, N,$ (e.g., the coordinates of warehouses) the task is to find $p$ points $x^i \in \R^2, \ i=1,\dots, p,$ (e.g., the coordinates of storage depots) solving
\[ \min_{X \in \R^{2 \times p}} f(X):= \sum_{j=1}^N w_j \min_{i=1,\dots,p} \| x^i-a^j \| \quad \text{s.t} \quad x^i \in S, \ i=1,\dots, p, \]
where $w_j>0, j=1,\dots,N,$ are weights (e.g., representing the transportation costs per unit of distance for the demand of warehouse $j$), $S$ denotes a convex region, usually a rectangle, and $X:=\left(x^1,\dots,x^p\right)$.
The objective function $f:\R^{2 \times p} \to \R$ can be rewritten as a DC~function $f=g-h$ with convex component functions $g,h: \R^{2 \times p} \to \R$,
\begin{align*}
    g(X)&:= \sum_{j=1}^N \sum_{i=1}^p w_j \| x^i-a^j \|,\\
    h(X)&:= \sum_{j=1}^N w_j \max_{k=1,\dots,p} \sum_{\substack{i=1,\dots,p \\ i \neq k}} \| x^i -a^j \|
\end{align*}
(see e.g.~\cite{Chen1998,Tuy2016}).

For our numerical tests we consider the 50-Customer Problems which can be found in~\cite{Chen1998,Eilon1971}. Given 50~demand points\footnote{The corresponding data can be found in Table~I of~\cite{Chen1998} as well as in Appendix~4.1(a) of~\cite{Eilon1971}.} $a^j \in \R^2, \ j=1,\dots, 50,$ the task is to place $p\in \{1,2,3\}$ facilities $x^i \in \R^2, \ i=1,\dots, p,$ in the rectangle $[0,10] \times [0,10]$ (the region over which also the data points are spread). 
For each number of facilities to be placed, we carry out 100~test runs with different initial guesses $X^0 \in \R^{2 \times p}$ of uniformly distributed random numbers ranging in $(0,10)$. 

In terms of psALMDC, we describe the feasible set via inequality constraints and set $C:=\R^{2 \times p}$ in the general problem formulation~\eqref{eq: problemFormulation}.
Besides, psALMDC requires some further initial guesses (see~\cref{alg: sALMDC}). In what follows, we identify $ X=  \left(x^1,\dots,x^p\right) \in\R^{2 \times p}$ with $X=\left(\left(x^1\right)^T,\dots,\left(x^p\right)^T\right)^T  \in \R^{2p}$ without change in notation (and similarly we proceed with other quantities in various dimensions).
We take $u^0:= \nolinebreak 4 \cdot \nolinebreak \mathbf{1}_{4p}$ where $\mathbf{1}_{n}$ denotes a vector of length $n$ containing only ones as entries. (Note that, for each $x^i, \ i=1,\dots, p,$ one has four inequalities to be satisfied.) Moreover, we choose $\sigma_0:=\epsilon_0=0.1$ and $Q_k:=10^{-3} \cdot \mathbf{I}_{2p}$ for all $k \in \N_0$ with $\mathbf{I}_{2p}$ denoting the $2p \times 2p$ identity matrix as well as $\delta_1:=\delta_2:=10^{-3}$ (see~\eqref{eq: psALMDC_numericTermination}). Additionally, we switch $\alpha$ from the default value to $0.9$\footnote{In principle, also the default value works. But adapting $\alpha$ has led to a speed up in convergence.}. In order to solve the unconstrained, convex, but nonsmooth subproblems in~\ref{alg: subprob} of \cref{alg: sALMDC} in iteration $k$, namely (neglecting constant terms)
\[
    \begin{aligned}
        \min_{X \in \R^{2 \times p}} \Bigg\{ \sum_{j=1}^N \sum_{i=1}^p w_j  \| x^i-a^j \| - \sum_{i=1}^p  \Big( \left(s^{i,k}\right)^T\left(x^i-x^{i,k}\right)
                &+\frac{1}{2\rho_{k}}\Big\| \max \left\{ 0,u^{k,i}+\rho_k  c^i\left(x^i\right) \right\} \Big\|^2 \\
        &+ \frac{q\sigma_k}{2}\| x^i-x^{i,k} \|^2 \Big) \Bigg\}
    \end{aligned}
\]
with $q:=10^{-3}$,
we first take advantage of the separability with respect to $i$ and then exploit that there is only a finite number of points in which the resulting objective functions $ \phi^{i,k}: \R^2 \to \R $
\[
    \phi^{i,k}(x):= \sum_{j=1}^N w_j \| x-a^j \| - \left(s^{i,k}\right)^T\left(x-x^{i,k}\right) +\frac{1}{2\rho_{k}}\Big\| \max \left\{ 0,u^{k,i}+\rho_k  c^i\left(x\right) \right\} \Big\|^2  +\frac{q\sigma_k}{2}\| x^i-x^{i,k} \|^2 
\]
are not differentiable. (Note that with $\phi^{i,k}$ we suppress dependencies others than the variable with respect to which minimization is carried out.) These points of non-differentiability are precisely the known data points $a^j, \ j=1,\dots,50$. Adapting the idea presented in~\cite{Beck2015} of modifying Weiszfeld's method~\cite{Weiszfeld2009} in order to avoid getting stuck in such points, we first determine $j_{i,k}:= \argmin_{j=1,\dots,50} \phi^{i,k}\left(a^j\right)$ and then check whether $a^{j_{i,k}}$ is a minimizer of $\phi^{i,k}$. In this case, we set $x^{i,k+1}:=a^{j_{i,k}}$. Otherwise, we continue with computing a descent direction $d^{i,k}$ of $\phi^{i,k}$ at $a^{j_{i,k}}$ and a trial point $\widetilde{x}^{i,k+1}:=a^{j_{i,k}}+t_k d^{i,k}$ with a stepsize $t_k$ (by Vardi and Zhang~\cite{Vardi2001}) which ensures $\phi^{i,k} \left(\widetilde{x}^{i,k+1}\right)< \phi^{i,k} \left(a^{j_{i,k}}\right)$. Now, we are in position to apply a (gradient) descent method to solve the $i$th subproblem, starting at the trial point $\widetilde{x}^{i,k+1}$ and remaining in the region of differentiability of the objective function. The methods for determining whether $a^{j_{i,k}}$ is already a minimizer and for constructing a descent direction otherwise can be found in~\cite{Beck2015}. As descent method, we take the \textsc{matlab} solver \texttt{fminunc}\footnote{see \url{https://de.mathworks.com/help/optim/ug/fminunc.html}} with the algorithm \texttt{trust-region} and the option \texttt{SpecifyObjectiveGradient} set to \texttt{true}.

For solving the subproblems of DCA (see~\eqref{eq: DCA_subprobProx}) the safeguarded augmented Lagrangian method~\cite{Birgin2014} gets applied. This way, the optimization problems in the subroutine are pretty close to the ones occurring in psALMDC. Hence, we utilize the same solution strategy of executing a descent method starting at a point which ensures that we stay in the region of differentiability during the iterations\footnote{Actually, the said strategy could be directly applied to the constrained optimization problem~\eqref{eq: DCA_subprobProx}. However, testing this approach with \textsc{matlab} solver \texttt{fmincon} as descent method for constrained optimization problems, we ended up with (approximately) the same solutions obtained with the use of the safeguarded augmented Lagrangian method but with a tendency of longer runtimes.}.  

In \cref{tab: locationPlanning} we illustrate, for each number of facilities $p$ to be placed, how many of the 100~instances each algorithm terminates with the coordinates of the optimal depot locations, that means the corresponding final objective value coincides with the optimal one from Appendix~4.1(b) of~\cite{Eilon1971} after rounding to the third decimal place. In addition, we present for each method the runtime taking the mean over those instances in which the algorithm has succeeded to identify the optimal location. 

\begin{table}[b]
    \centering
    \caption{Results for the location planning problem according to the number $p$ of facilities to be placed} \label{tab: locationPlanning}
    \begin{subtable}[c]{0.4\textwidth}
        \subcaption{Number of successful instances of 100~test runs}
        \begin{tabular}{l >{$}r<{$} >{$}r<{$} >{$}r<{$}}
            \hline\noalign{\smallskip}
            & p=1 & p=2 & p=3 \\ 
            \noalign{\smallskip}\hline\noalign{\smallskip}
            psALMDC & \mathbf{100} & \mathbf{9} & \mathbf{36} \\ 
            PBMDC & 99 & 3 & 27 \\ 
            DCA & \mathbf{100} & 5 & 27 \\ 
            \noalign{\smallskip}\hline
        \end{tabular}
    \end{subtable} \hspace{.5cm}
    \begin{subtable}[c]{0.4\textwidth}
        \subcaption{CPU time in seconds, averaged over the successful instances} \label{tab: locationPlanning_Time}
        \begin{tabular}{l >{$}r<{$} >{$}r<{$} >{$}r<{$}}
            \hline\noalign{\smallskip}
            & p=1 & p=2 & p=3 \\ 
            \noalign{\smallskip}\hline\noalign{\smallskip}
            psALMDC & \mathbf{0.0054} & \mathbf{0.0258} & 0.4104 \\ 
            PBMDC & 0.0493 & 0.0804 & \mathbf{0.1534} \\ 
            DCA & 0.0703 & 0.2231 & 0.5357 \\ 
            \noalign{\smallskip}\hline
        \end{tabular}
    \end{subtable}	
\end{table}

Regardless of the number of facilities $p$ to be placed, psALMDC has the highest number of successful test runs, followed by DCA which is slightly ahead of PBMDC. It is not surprising that for $p=1$ (nearly) all methods are able to determine the optimal location in every single instance as there is only one local optimum of the underlying minimization problem, whereas with growing number of facilities, also the number of (known) local optima increases (see Table~4.1 of~\cite{Eilon1971}). 
The reason why PBMDC fails for $p=1$ in determining the optimal location for one instance is that it terminates beforehand with an error indicating that the subproblem could not be solved\footnote{The precise error reads \texttt{Numerical issues when solving the master program}.}. This precise error also occurs four times with $p=2$. 
In terms of the CPU~time (see \cref{tab: locationPlanning_Time}) for $p=1$ and $p=2$ psALMDC is also on average the fastest algorithm, followed by PBMDC and then DCA. However, for $p=3$ PBMDC takes the lead over psALMDC with DCA being still slightly behind. Altogether, the runtime averaged over the successful instances seems to be reasonably low for each of the three algorithms.  
finally, let us note that for $p=1$ psALMDC requires for all but one instance only two iterations to determine the optimal location which is less than one-tenth of the number of iterations required by the remaining two algorithms.

\subsection{Sparse signal recovery} \label{sec: sparseSignalRecovery}
A major task in compressed sensing consists in retrieving a signal $\overline{x} \in \R^n$ from a measurement $b:=A \overline{x} \in \R^m$ with $m \ll n$, where $ A \in \R^{m \times n}$ denotes the sensing (measurement) matrix. In order to save storage space and perhaps computation time for further processing, there is a particular interest in finding a sparse representation of the signal, that is a solution $ x \in \R^n$ of the linear equation $ Ax=b$ with $x$ being as sparse as possible.
This task results in finding a solution of the optimization problem
\begin{equation} \label{eq: sparseSignal_l0}
    \min_{x \in \R^n} \Vert x \Vert_0 \qquad \text{s.t.} \qquad Ax=b,
\end{equation}
where $\Vert \cdot \Vert_0$ denotes the $\ell_0$-norm that counts the number of nonzero entries of a vector. However, non-convexity together with discontinuity of the $\ell_0$-norm causes severe difficulties (see~\cite{Gotoh2018} for more details). Hence, one switches to alternative formulations which still allow for sparsity of solutions. Subsequently, we present two different approaches, one following~\cite{Yin2015} and involving constrained $\ell_{1-2}$ minimization, the other resulting from~\cite{Gotoh2018} taking into account the difference of the $\ell_1$- and the largest-$k$-norm.

Based on the observation that the difference $\Vert \cdot \Vert_1 - \Vert \cdot \Vert_2$ can serve as a sparsity metric, the authors in~\cite{Yin2015} replace the optimization problem~\eqref{eq: sparseSignal_l0} for retrieving a sparse signal from the measurement by
\begin{equation} \label{eq: sparseSignal_l12}
    \min_{x \in \R^n} \left\{\Vert x \Vert_1 - \Vert x \Vert_2 \right\} \qquad \text{s.t.} \qquad Ax=b.
\end{equation}

The second approach adopts an idea from~\cite{Gotoh2018}. There, within the context of optimization problems with cardinality constraints, the $\ell_0$ norm gets replaced by the difference $\Vert \cdot \Vert_1 - \Vert \cdot \Vert_{[k]}$, where $\Vert \cdot \Vert_{[k]}$ denotes the largest-$k$-norm, that is the sum over the $k$ largest (in absolute value) entries of a vector. This is motivated by the fact that for an $s$-sparse vector $x \in \R^n$, which means $\Vert x \Vert_0=s$, one obviously obtains $\Vert x \Vert_1 - \Vert x \Vert_{[k]} =0$ if and only if $k \geq s$. Therefore, the second optimization problem under consideration reads
\begin{equation} \label{eq: sparseSignal_kNorm}
    \min_{x \in \R^n} \left\{ \Vert x \Vert_1 - \Vert x \Vert_{[k]}\right\} \qquad \text{s.t.} \qquad Ax=b
\end{equation}
with a predetermined $k \in \{1,\dots,n\}$. Note that all algorithms taken into account during the numerical examples require the efficient computation of a subgradient of the second DC~component $\Vert \cdot \Vert_{[k]}$. However, \cite{Gotoh2018,Watson1992} provide exactly what is needed for that matter.

Our numerical test settings are geared to the ones in Section~5.2 of~\cite{Yin2015}. We first try to recover different test signals $\overline{x} \in \R^n$ with varying sparsity $s$ from the measurement $b:=A\overline{x}$ with several kinds of sensing matrices $A \in \R^{m \times n}$, namely random Gaussian and random partial discrete cosine transform~(DCT) as well as randomly oversampled partial DCT~matrices. 
Throughout our experiments, whenever addressing problem~\eqref{eq: sparseSignal_kNorm}, we choose $k=s$ as the parameter for the largest-$k$-norm. Of course, one could argue that in applications the sparsity $s$ of the signal to be determined from a measurement is in general not known a priori. Hence, choosing $k$ with respect to, for example, the rank of the matrix~$A$ might be more advisable. But, as the task here is to recover a known signal, we allow for $k=s$.

For the random Gaussian matrices, each column of $A$ contains quasi-randomly generated values drawn from a multivariate normal distribution with mean vector $\mathbf{0_m}$ and covariance matrix $\frac{1}{m}\mathbf{I}_m$. The columns $a_i, \ i=1,\dots,n,$ of the random partial DCT matrices $A$ are constructed via
\[ a_i = \frac{1}{\sqrt{m}} \cos \left(2\pi i \xi\right),\]
where $\xi \in \R^m$ contains entries quasi-randomly drawn from the uniform distribution in the interval~$(0,1)$. Similarly, the columns $a_i, \ i =1\dots,n,$ of the randomly oversampled partial DCT matrices $A$ are chosen as
\begin{equation} \label{eq: sensingMatrix_coherent}
    a_i = \frac{1}{\sqrt{m}}\cos \left(\frac{2 \pi i}{F} \xi \right),
\end{equation}
where $\xi \in \R^m$ gets selected like above and $F \in \N$ defines the refinement factor which is closely related to the conditioning of the matrix $A$ (see~\cite{Yin2015} for details). While random Gaussian and random partial DCT matrices are incoherent with high probability and hence, are suitable for compressed sensing, randomly oversampled partial DCT matrices feature significantly higher coherence, leading to ill-conditioning of the sensing matrices~\cite{Yin2015}. 

The nonzero entries of each $s$-sparse test signal $\overline{x}\in \R^n$ are drawn from the standard normal distribution. Thereby, the support of $\overline{x}$, $\operatorname{supp}\left(\overline{x}\right):= \left\{ 1 \leq i \leq n \ \middle| \ \overline{x}\neq 0 \right\}$, is taken as a random index set with elements derived from the discrete uniform distribution on the interval $[1,n]$.
While considering randomly oversampled partial DCT matrices, one has to ensure that the entries of $\operatorname{supp} \left( \overline{x} \right)$ are sufficiently separated in order to enable recovery of the test signal~\cite{Yin2015}. Hence, we impose the additional condition
\begin{equation} \label{eq: signal_minimumSeparation}
    \min_{j,k\in \operatorname{supp} \left( \overline{x} \right)} \vert j-k \vert \geq L
\end{equation}
on the randomly generated support of $\overline{x}$, where $L:=2F$ determines the minimum separation of two consecutive nonzero elements of the vector (for more details on how to choose $L$, see~\cite{Yin2015}).
As initial guess $x^0 \in \R^n$ for our test runs, we take a perturbation $ x^0:= \overline{x}+\zeta$ of the signal to be reconstructed with $\zeta \in \R^n$ drawn from a multivariate normal distribution with mean vector $\mathbf{0}_n$ and covariance matrix~$\frac{1}{2}\mathbf{I}_n$.

Additionally, psALMDC requires some further initialization (see~\cref{alg: sALMDC}). To this end, we take $v^0:=m \mathbf{1}_m, \sigma_0:=100$ and $\epsilon_0:=0.1$. Furthermore, we set $Q_k:=10^{-4} \mathbf{I}_n$ for all $k \in \N_0$, $\delta_1:=1$ as well as $\delta_2:=10^{-4}$ in case of problem~\eqref{eq: sparseSignal_l12} and $\delta_2:=10^{-5}$ for solving~\eqref{eq: sparseSignal_kNorm}. 
For solving the unconstrained, convex, but nonsmooth subproblems of psALMDC in~\ref{alg: subprob} of \cref{alg: sALMDC}, which read in iteration $k$ (omitting constant terms)
\[ \min_{x \in \R^n} \phi^k(x):= \Vert x \Vert_1 - \left( s^k \right)^T x + \left(v^k \right)^T (Ax-b) +\frac{\rho_k}{2} \Vert Ax-b \Vert^2 + \frac{q\sigma_k}{2} \| x-x^k \|^2  \]  
with $q:=10^{-4}$, we exploit that $\phi^k- \Vert \cdot \Vert_1$ is differentiable with Lipschitz-continuous gradient, and a proximal point of the $\ell_1$-norm at any point $x \in \R^n$,
\[ \operatorname{prox}_{\Vert \cdot \Vert_1}(x) := \argmin_{y \in \R^n} \left\{ \Vert y \Vert_1 + \frac{1}{2} \Vert y-x \Vert^2\right\},\]
is easy to compute (see e.g. Example~6.8 in~\cite{Beck2017}). Hence, the Fast Iterative Shrinkage-Thresholding Algorithm (FISTA, see Chapter~10.7.1 in~\cite{Beck2017}) with backtracking is the solution method of our choice.

Like in the previous section we apply the safeguarded augmented Lagrangian method~\cite{Birgin2014} for solving the subproblems~\eqref{eq: DCA_subprobProx} occurring within DCA, in order to yield optimization problems in the subroutine which are pretty similar to the ones arising within psALMDC. Hence, it suggests itself to once again deploy FISTA with backtracking for tackling the said composite problems of the subroutine.

Preliminary experiments revealed that in case psALMDC terminates with a non-optimal solution uncommon high runtime may occur on-again-off-again, especially while considering coherent sensing matrices.
Also with DCA very few instances occurred for which the method has not converged within a reasonable time, however, not in context with coherent but incoherent measurement matrices.
Hence, we impose a general (and quite generous) time limit of one hour for these two methods which we extend to two hours when taking into account randomly oversampled DCT~matrices for problem~\eqref{eq: sparseSignal_l12}.
Let us note that we do not restrict the runtime of PBMDC as on the one hand, there are few instances for which the algorithm succeeds in recovering the test signal but exceeds the mentioned limit and on the other hand, in none of the preliminary test runs PBMDC surpasses this limit excessively. \\

Executing the numerical experiments, we first analyze the capacity of all three solvers psALMDC, DCA and PBMDC to reconstruct a sparse signal from a measurement involving (with high probability) incoherent sensing matrices, these are random Gaussian as well as random partial DCT~matrices. To this end, we fix $n:=256, \ m:=64$ and consider for every sparsity level $s \in \{10,12,14,\dots,28\}$ 100~test runs for each of our two approaches~\eqref{eq: sparseSignal_l12} and~\eqref{eq: sparseSignal_kNorm}. Thereby, all three algorithms are rendered the same data which varies with each instance in the test signal as well as in the measurement matrix.

In \cref{im: signalRecovery_incoherentSucc} we depict for both problem formulations and each sparsity level in how many of the 100~instances the respective algorithm has been able to recover a reconstruction~$x^{\ast}$ of the test signal $\overline{x}$ with a relative error of at most $10^{-3}$, that is $\frac{\| x^{\ast}-\overline{x}\|}{\| \overline{x}\|} \leq 10^{-3}$. 
Besides, in \cref{im: signalRecovery_incoherentTime} we report the CPU~time averaged for each algorithm over the instances rated as a success. Additionally, we provide for every method the median of the CPU~time taking into account once more only the test runs in which the corresponding test signal could be reconstructed appropriately. For the sake of completeness, let us note that in \cref{im: signalRecovery_DCTtime} for psALMDC and sparsity level $s=28$ the mean and the median of the CPU~time constitutes 554~seconds, having two successful instances with highly varying runtime.

\begin{figure}[t]
    \centering
    \begin{subfigure}{.45\textwidth}
        \centering
        \includegraphics[keepaspectratio=true, width=\textwidth]{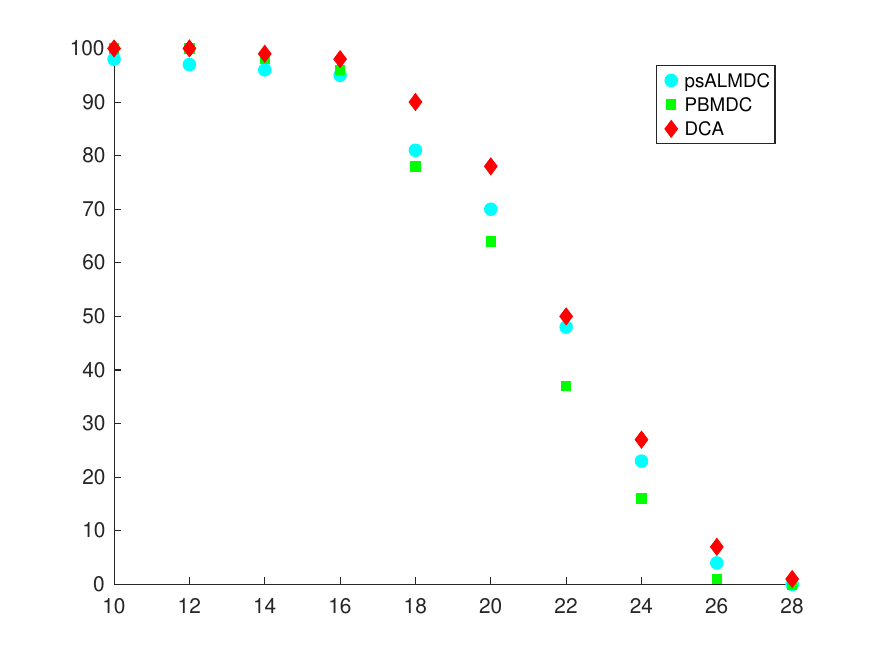}
        \caption{Problem~\eqref{eq: sparseSignal_l12} with Gaussian sensing matrix}
    \end{subfigure}
    \begin{subfigure}{.45\textwidth}
        \centering
        \includegraphics[keepaspectratio=true, width=\textwidth]{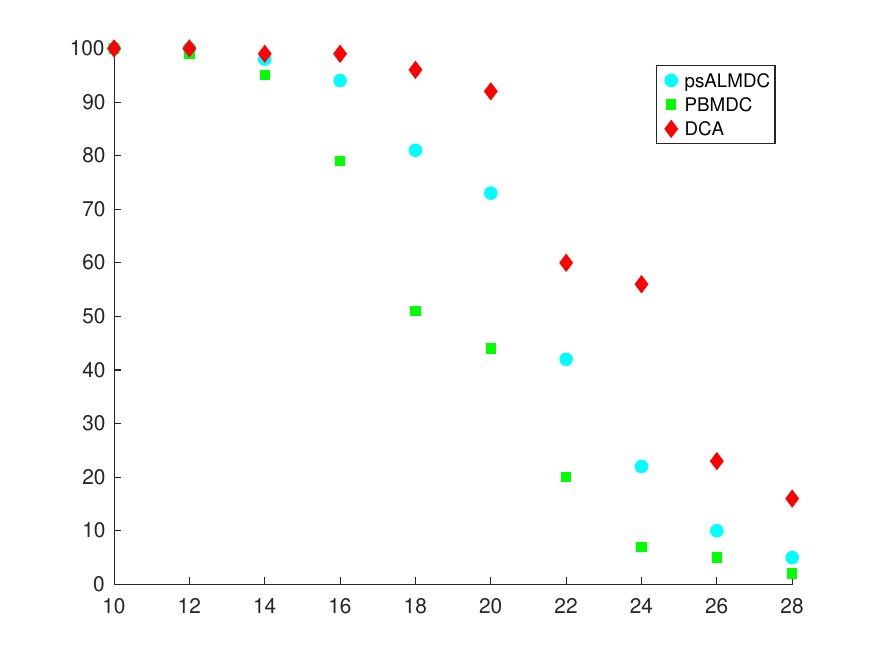}
        \caption{Problem~\eqref{eq: sparseSignal_kNorm} with Gaussian sensing matrix}
    \end{subfigure}
    \par
    \begin{subfigure}{.45\textwidth}
        \centering
        \includegraphics[keepaspectratio=true, width=\textwidth]{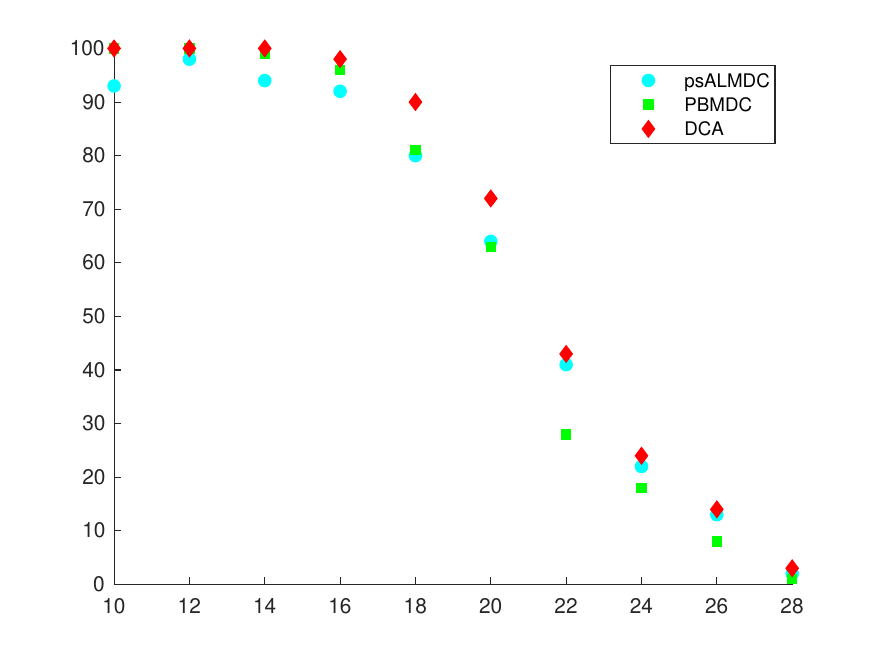}
        \caption{Problem~\eqref{eq: sparseSignal_l12} with DCT~sensing matrix}
    \end{subfigure}
    \begin{subfigure}{.46\textwidth}
        \centering
        \includegraphics[keepaspectratio=true, width=\textwidth]{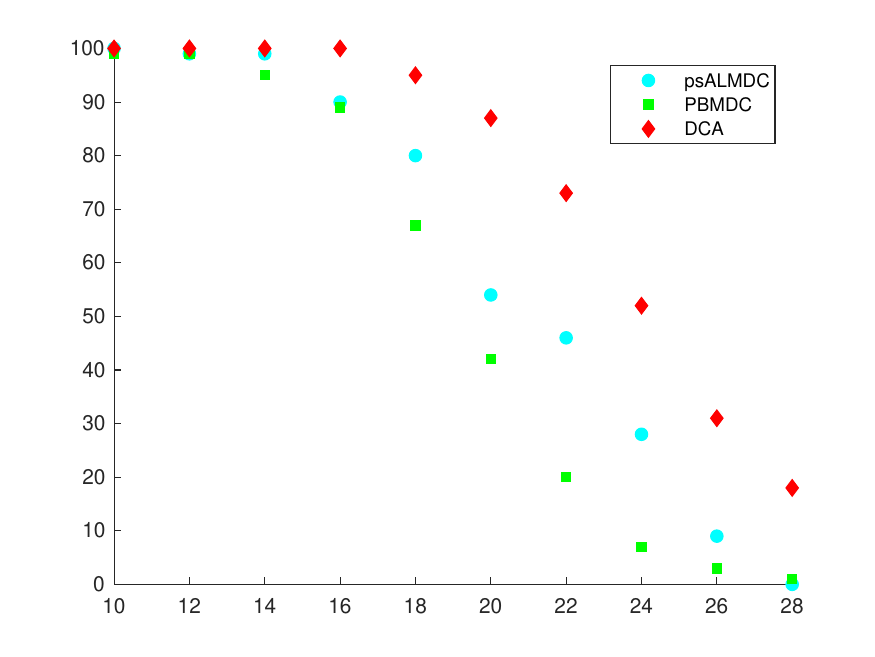}
        \caption{Problem~\eqref{eq: sparseSignal_kNorm} with DCT~sensing matrix}
    \end{subfigure}
    \caption{Number of successful instances of 100 test runs with respect to different sparsity levels, arranged according to various test settings} \label{im: signalRecovery_incoherentSucc}
\end{figure}

\begin{figure}[t]
    \centering
    \begin{subfigure}{.45\textwidth}
        \centering
        \includegraphics[keepaspectratio=true, width=\textwidth]{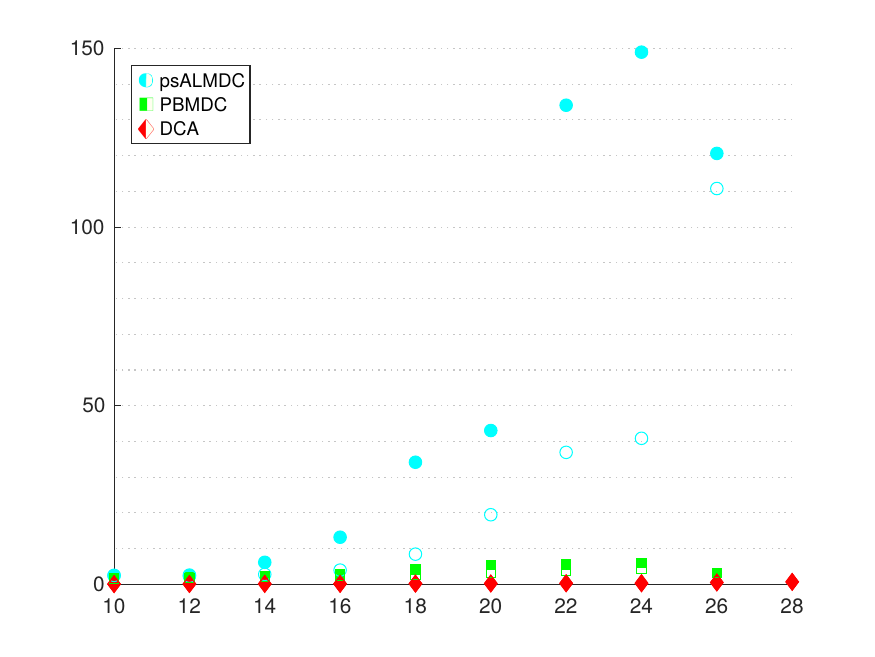}
        \caption{Problem~\eqref{eq: sparseSignal_l12} with Gaussian sensing matrix}
    \end{subfigure}
    \begin{subfigure}{.45\textwidth}
        \centering
        \includegraphics[keepaspectratio=true, width=\textwidth]{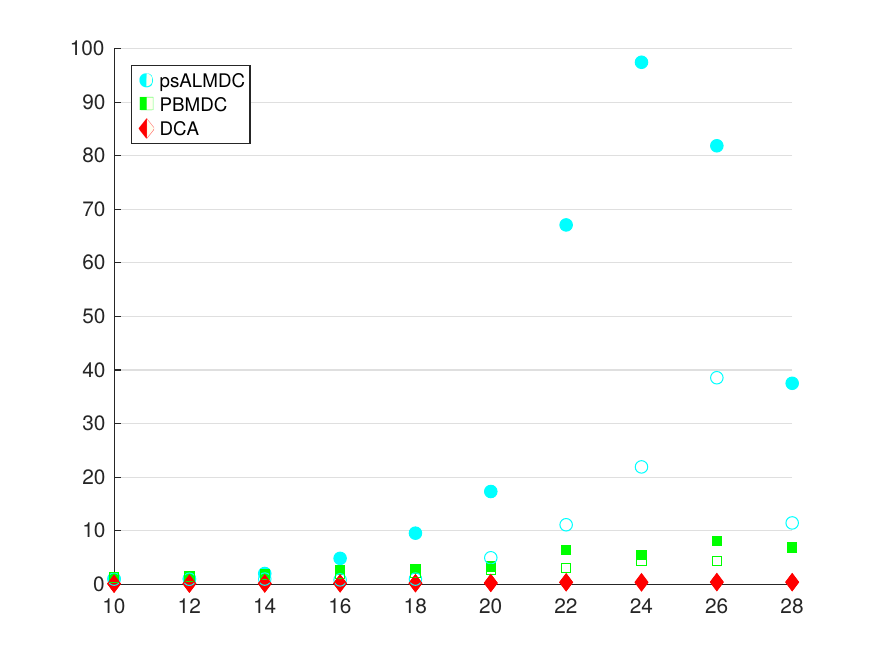}
        \caption{Problem~\eqref{eq: sparseSignal_kNorm} with Gaussian sensing matrix}
    \end{subfigure}
    \par
    \begin{subfigure}{.45\textwidth}
        \centering
        \includegraphics[keepaspectratio=true, width=\textwidth]{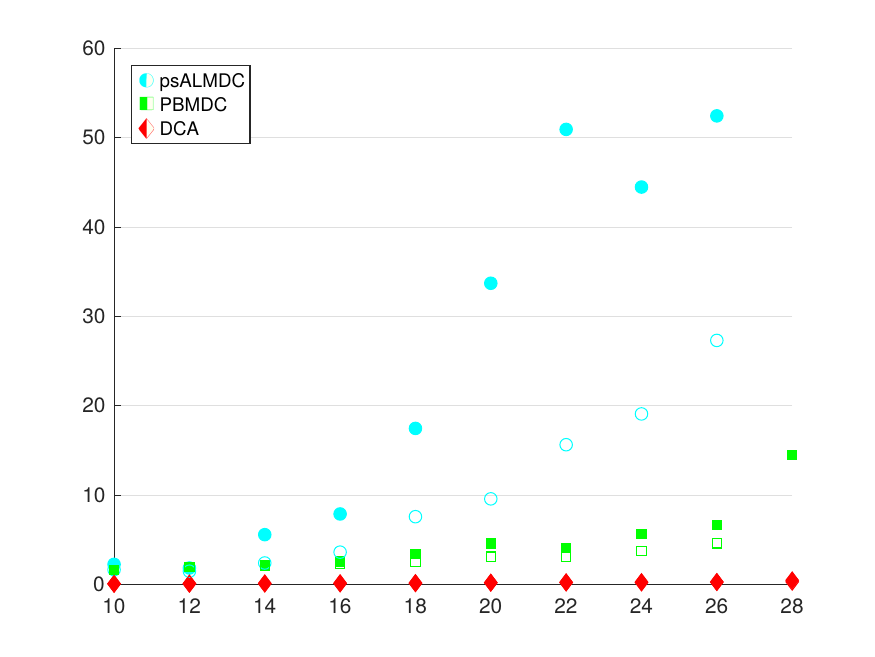}
        \caption{Problem~\eqref{eq: sparseSignal_l12} with DCT~sensing matrix (extract) \label{im: signalRecovery_DCTtime}}
    \end{subfigure}
    \begin{subfigure}{.47\textwidth}
        \centering
        \includegraphics[keepaspectratio=true, width=\textwidth]{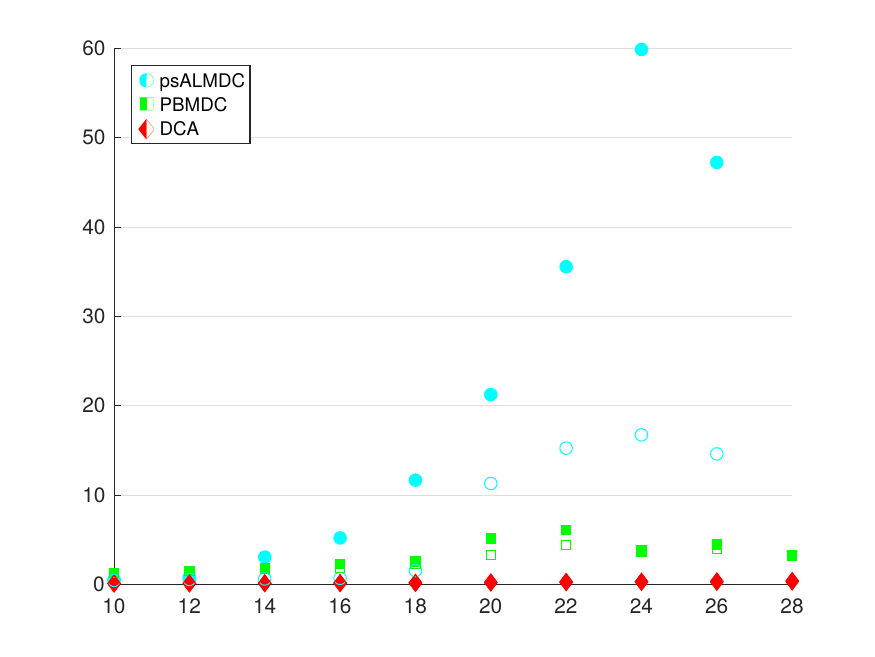}
        \caption{Problem~\eqref{eq: sparseSignal_kNorm} with DCT~sensing matrix}
    \end{subfigure}
    \caption{CPU time in seconds with respect to different sparsity levels, averaged (filled markers) as well as taken the median (marker silhouettes) over the successful instances only, arranged according to various test settings} \label{im: signalRecovery_incoherentTime}
\end{figure}

Clearly, DCA outperforms psALMDC as well as PBMDC in terms of both, success rates and CPU~time. In particular, for each test setting and each sparsity level the CPU~time with respect to the successful instances undershoots one second on average. However, while taking Gaussian sensing matrices and the sparsity level $s=26$ of the test signal into account, there is one single instance for which DCA terminates early because the subroutine for solving the subproblem does not converge.

The differences in the performance of psALMDC and PBMDC are less strict.
While considering the success rates in the context of $l_{1-2}$ minimization, that is problem~\eqref{eq: sparseSignal_l12}, PBMDC surpasses psALMDC initially for sparsity levels up to 16 in terms of Gaussian and 18 in terms of DCT sensing matrices but then gets outperformed by psALMDC. With regard to problem~\eqref{eq: sparseSignal_kNorm} involving the largest-$k$-norm psALMDC (nearly) throughout outrages PBMDC. 
However, taking into account the development of the CPU~time with increasing sparsity level PBMDC is the algorithm to show the overall better performance in comparison with psALMDC. Although, psALMDC is able to keep up with PBMDC and in terms of problem~\eqref{eq: sparseSignal_kNorm} to even undercut PBMDC for low sparsity levels~$s$, the gap between the averaged runtime of psALMDC and PBMDC increases visibly when exceeding $s=14$. 
At the beginning of the distinct ascent of the mean runtime of psALMDC the runtime of psALMDC stays for the majority of the successful instances around the corresponding averaged runtime of PBMDC. But at the same time there are single outliers of successful instances with comparably long runtime. Hence, the median of psALMDCs runtime increases slightly more slowly than the respective mean, sticking around the mean runtime of PBMDC for a few further sparsity levels. However, at the latest of sparsity level $s=20$, the runtime of psALMDC climbs in general for successful instances above those of PBMDC.
For PBMDC (and also DCA) the spread of runtime within a single sparsity level appears, if at all, to insignificant extent.\\

Execution and evaluation of the numerical experiments regarding the ill-conditioned problem, that is considering randomly oversampled partial DCT~matrices for sensing in both, \eqref{eq: sparseSignal_l12} as well as~\eqref{eq: sparseSignal_kNorm}, equals the previous tests involving (with high probability) incoherent measurement matrices. But this time, we take $n:=2000, \ m:=100$ and consider refinement factors $F \in \{10,20\}$ in~\eqref{eq: sensingMatrix_coherent} and~\eqref{eq: signal_minimumSeparation}, respectively, as well as sparsity levels $s \in \{5,7,9,...,25\}$ for~\eqref{eq: sparseSignal_l12} and $ s \in \{5,7,9,\dots, 29\} $ for~\eqref{eq: sparseSignal_kNorm}. Once again, we illustrate for each problem~\eqref{eq: sparseSignal_l12} and~\eqref{eq: sparseSignal_kNorm} and each algorithm in \cref{im: signalRecovery_coherentSucc} the success rates according to the sparsity levels and in \cref{im: signalRecovery_coherentTime} the average as well as the median of the CPU~time restricted to the successful instances. 

\begin{figure}[t]
    \centering
    \begin{subfigure}{.45\textwidth}
        \centering
        \includegraphics[keepaspectratio=true, width=\textwidth]{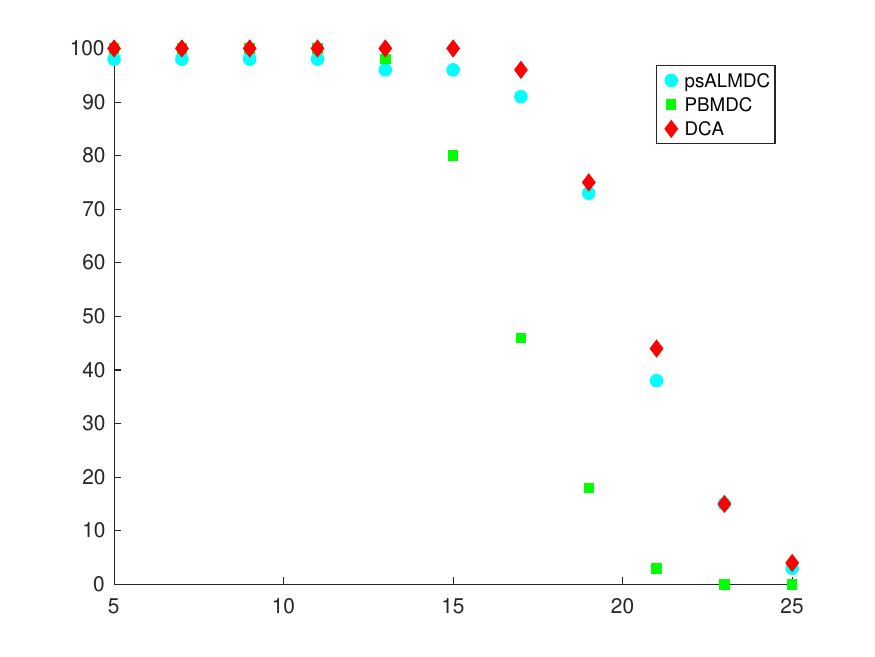}
        \caption{Problem~\eqref{eq: sparseSignal_l12}, $F=10$}
    \end{subfigure}
    \begin{subfigure}{.45\textwidth}
        \centering
        \includegraphics[keepaspectratio=true, width=\textwidth]{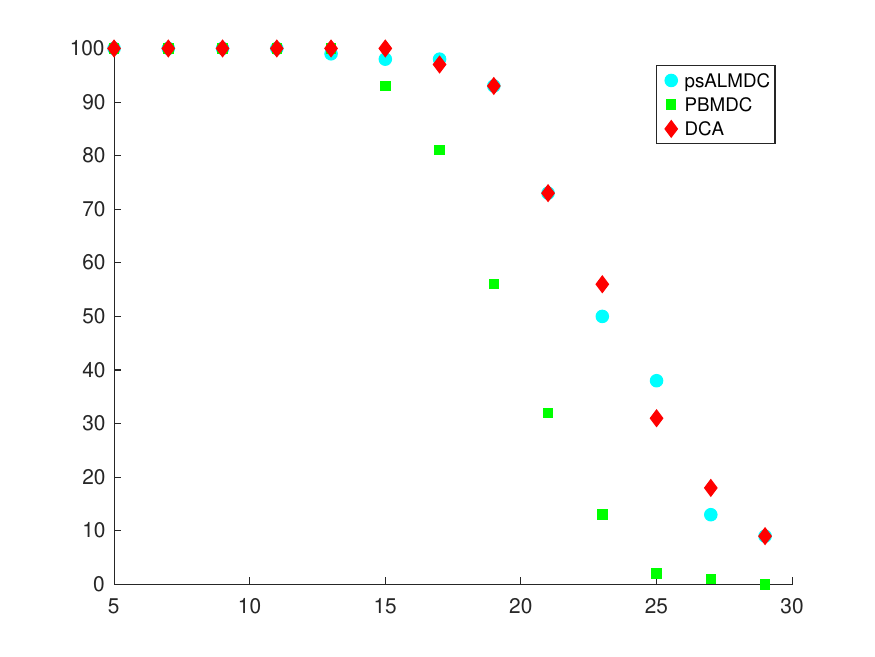}
        \caption{Problem~\eqref{eq: sparseSignal_kNorm}, $F=10$}
    \end{subfigure}
    \par
    \begin{subfigure}{.45\textwidth}
        \centering
        \includegraphics[keepaspectratio=true, width=\textwidth]{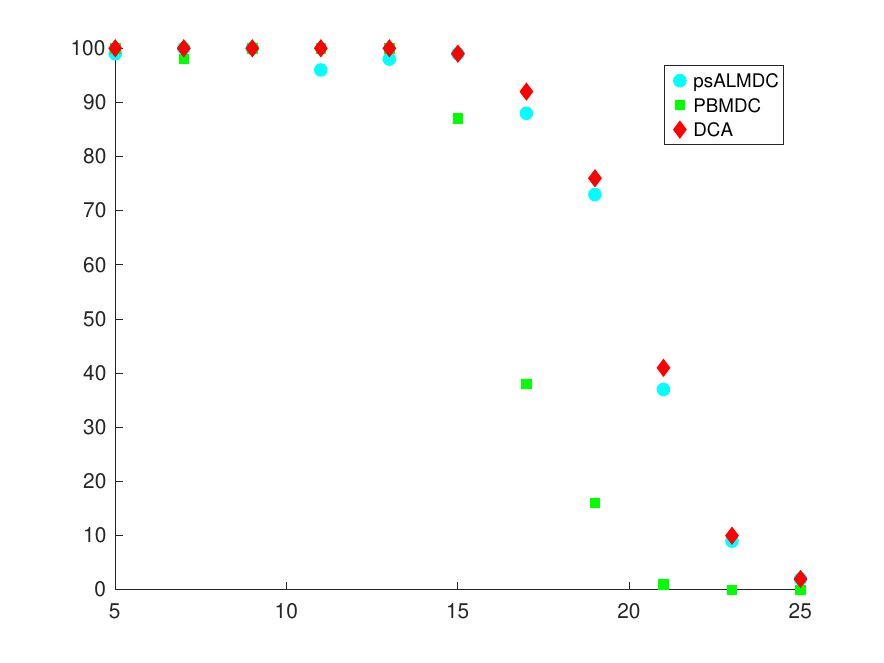}
        \caption{Problem~\eqref{eq: sparseSignal_l12}, $F=20$}
    \end{subfigure}
    \begin{subfigure}{.46\textwidth}
        \centering
        \includegraphics[keepaspectratio=true, width=\textwidth]{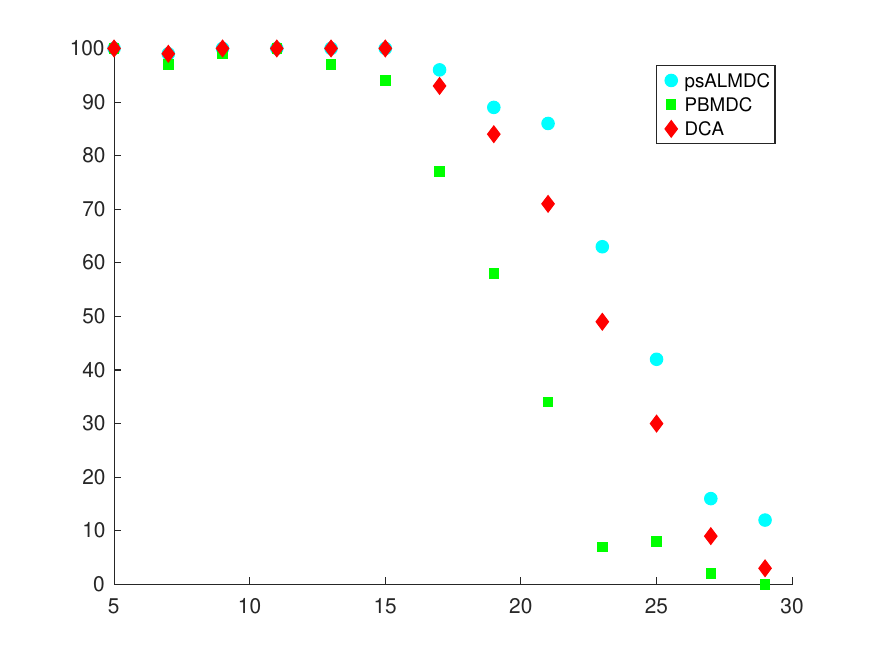}
        \caption{Problem~\eqref{eq: sparseSignal_kNorm}, $F=20$}
    \end{subfigure}
    \caption{Number of successful instances of 100 test runs with respect to different sparsity levels while considering randomly oversampled DCT sensing matrices with different refinement factors $F$} \label{im: signalRecovery_coherentSucc}
\end{figure}

\begin{figure}[t]
    \centering
    \begin{subfigure}{.45\textwidth}
        \centering
        \includegraphics[keepaspectratio=true, width=\textwidth]{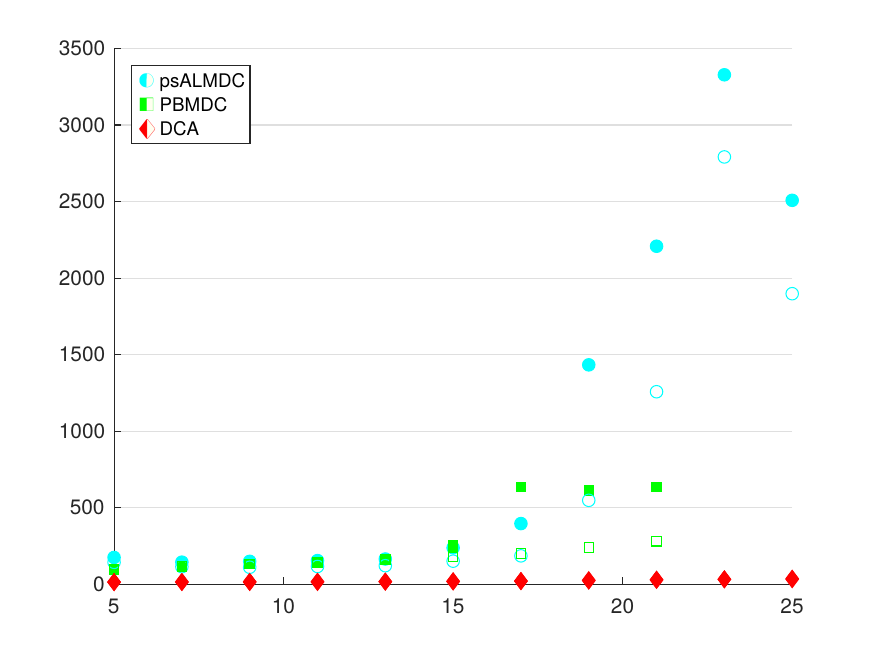}
        \caption{Problem~\eqref{eq: sparseSignal_l12}, $F=10$}
    \end{subfigure}
    \begin{subfigure}{.45\textwidth}
        \centering
        \includegraphics[keepaspectratio=true, width=\textwidth]{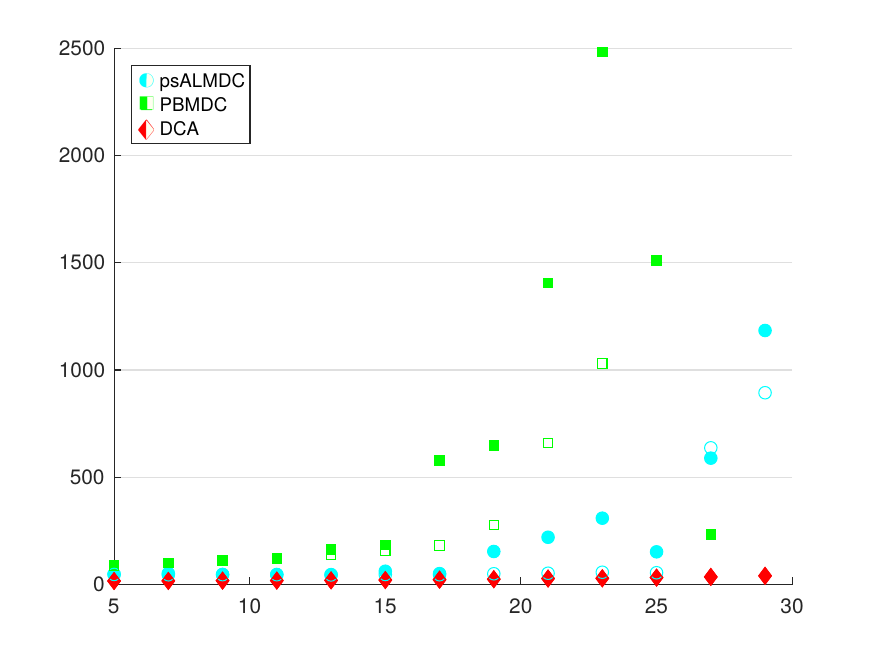}
        \caption{Problem~\eqref{eq: sparseSignal_kNorm}, $F=10$}
    \end{subfigure}
    \par
    \begin{subfigure}{.45\textwidth}
        \centering
        \includegraphics[keepaspectratio=true, width=\textwidth]{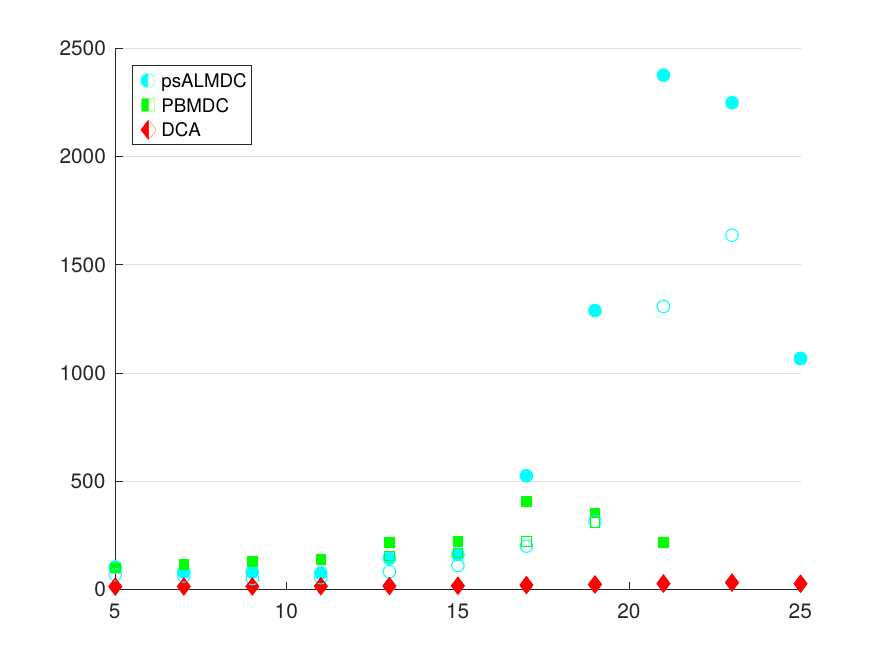}
        \caption{Problem~\eqref{eq: sparseSignal_l12}, $F=20$}
    \end{subfigure}
    \begin{subfigure}{.46\textwidth}
        \centering
        \includegraphics[keepaspectratio=true, width=\textwidth]{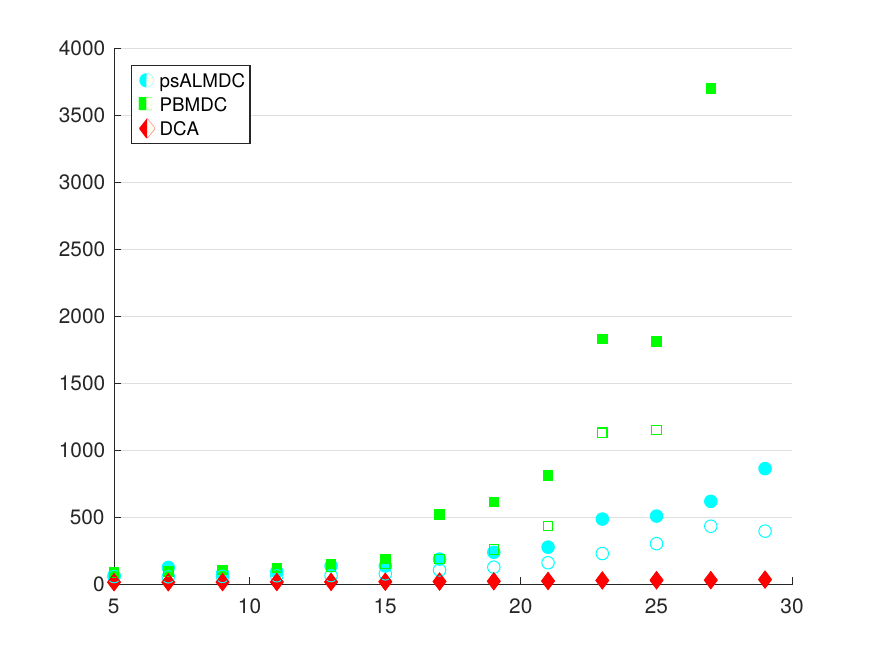}
        \caption{Problem~\eqref{eq: sparseSignal_kNorm}, $F=20$}
    \end{subfigure}
    \caption{CPU time in seconds with respect to different sparsity levels, averaged (filled markers) as well as taken the median (marker silhouettes) over the successful instances only, while considering randomly oversampled DCT sensing matrices with different refinement factors $F$} \label{im: signalRecovery_coherentTime}
\end{figure}

Once again, DCA is the overall outstanding algorithm. However, psALMDC is able to outperform DCA in view of the success rates while considering problem~\eqref{eq: sparseSignal_kNorm}, sparsity levels greater or equal than~17 and a refinement factor of $F=20$ for the randomly oversampled partial DCT matrices. Besides, while further comparing the success rates PBMDC falls behind the two remaining algorithms for most of the scenarios. Only taking into account low sparsity levels of at most 13 in context of problem~\eqref{eq: sparseSignal_l12} PBMDC gets slightly ahead of psALMDC for quite some instances.

Comparing the CPU~time by means of \cref{im: signalRecovery_coherentTime} there is no doubt that DCA by far outnumbers psALMDC and PBMDC. It takes DCA on average less than 45~seconds to reconstruct the test signal from a measurement involving a randomly oversampled DCT sensing matrix.

Collating the CPU~time of psALMDC and PBMDC there is a clear difference between problem~\eqref{eq: sparseSignal_l12} and~\eqref{eq: sparseSignal_kNorm}. In case of problem~\eqref{eq: sparseSignal_kNorm} (nearly) throughout all sparsity levels and both refinement factors it takes PBMDC on average more time to reconstruct the test signal than psALMDC (considering the successful instances). Devoting ourselves to problem~\eqref{eq: sparseSignal_l12} it strikes that psALMDC gets visibly outperformed by PBMDC for high sparsity levels~$s$, at the latest from $s=19$. However, having a closer look at the results with refinement factor $F=20$ one realizes that psALMDC is ahead of PBMDC for (nearly all) sparsity levels up to 15.

In the end, let us note that a visible gap between mean and median of the CPU~time can be observed in \cref{im: signalRecovery_coherentTime} for both psALMDC and PBMDC especially with increasing sparsity level. However, the gaps are in general less distinct than the ones occurring with psALMDC for high sparsity levels in \cref{im: signalRecovery_incoherentTime}. The reasons are similar though.\\

Finally, we want to give a quick contrasting juxtaposition between the two considered methods for reconstruction a sparse test signal, constrained $\ell_{1-2}$ minimization and the one involving the largest-$k$-norm (see problems~\eqref{eq: sparseSignal_l12} and~\eqref{eq: sparseSignal_kNorm}, respectively). 
There is a clear tendency that with the usage of the largest-$k$-norm in the objective function one can obtain higher success rates compared to the problem formulation involving the $\ell_2$-norm instead (see \cref{im: signalRecovery_incoherentSucc} and \ref{im: signalRecovery_coherentSucc}). PBMDC is the only method for which employing problem~\eqref{eq: sparseSignal_l12} leads to better results (nearly) independent of the chosen sparsity level while taking into account random Gaussian matrices. Also while looking at the second class of (with high probability) incoherent sensing matrices, those are random partial DCT~matrices, with PBMDC there occur increasingly instances for which problem~\eqref{eq: sparseSignal_l12} seems to be more favorable in terms of success rates.

Comparing the runtime of each of our two approaches for retrieving a sparse signal differences in between the considered algorithms get visible (see \cref{im: signalRecovery_incoherentTime} and~\ref{im: signalRecovery_coherentTime}). On trend, psALMDC and PBMDC have been able to reconstruct the test signal faster (for the successful instances) while using the largest-$k$-norm (that is problem~\eqref{eq: sparseSignal_kNorm}) compared to taking into account the $\ell_2$-norm (see problem~\eqref{eq: sparseSignal_l12}). Merely, in the context of coherent sensing and higher sparsity levels of at least 17 the average runtime of PBMDC lies for problem~\eqref{eq: sparseSignal_l12} below the one for problem~\eqref{eq: sparseSignal_kNorm}. Surprisingly, DCA shows the reverse behavior. In general, this method requires less runtime for the reconstruction of the test signal while solving problem~\eqref{eq: sparseSignal_l12} rather than problem~\eqref{eq: sparseSignal_kNorm}, at least for sparsity levels within low and mid range. For (very) high sparsity levels, however, the usage of the largest-$k$-norm seems to be preferable against the $\ell_2$-norm also for DCA, no matter which sensing matrix is taken into account.

\section{Concluding remarks} \label{sec: conclusion}
In this work a proximal safeguarded augmented Lagrangian method for minimizing nonsmooth DC~functions over a convex feasible region defined by a combination of linear equality as well as convex inequality constraints and an abstract constraint, called psALMDC, was introduced. Contrary to many existing algorithms, the new method provides both, handling of a general nonsmooth DC~objective function and augmentation of the (in)equality constraints, which are probably the most common restrictions in applications. Combining the basic concepts of the classical DC~Algorithm (DCA,~\cite{LeThi1996}), that is substituting the concave part of the objective function by an affine majorization, and the safeguarded augmented Lagrangian method~\cite{Birgin2014}, the subproblems to be solved in each iteration are convex ones. Thereby, constraints occur within those only if there is indeed an abstract one in the overall problem, which is rare in applications, or if someone intends to keep simple (in)equality constraints, such as box constraints, explicitly. Besides, the overall method includes an implementable termination criterion.

Assuming boundedness of the (primal) iterates, which is argued not to be too restrictive, and supposing a modified Slater constraint qualification to be satisfied for the problem under consideration, the existence of a convergent subsequence of iterates, both in terms of primal as well as Lagrangian dual variables, is proven. The corresponding limit point is shown to be a generalized KKT~point of the regarded problem.

Numerical experiments were conducted in order to judge the practical performance of the new algorithm against existing solution methods. For the purpose of comparison, the classical DCA~\cite{LeThi1996} next to the proximal bundle method for nonsmooth DC~programming (PBMDC,~\cite{deOliveira2019}) were taken into account. As applications served the 50-Customer Problems from~\cite{Chen1998,Eilon1971} and the recovery of sparse signals~\cite{Yin2015,Gotoh2018}.

For the 50-Customer Problems our new method psALMDC shows the overall best performance in terms of success in determining the optimum and in the majority of cases also with regard to CPU~time. 
However, while considering reconstruction of sparse signals the classical DCA is the method to clearly outperform both psALMDC and PBMDC. Nevertheless, the latter two algorithms yield satisfactory success rates, too, which in terms of psALMDC approach the ones of DCA for most of the instances.

So far, hardly any algorithms for tackling constrained DC~optimization problems are known which provide, in the limit, points meeting requirements beyond criticality, like for example d-stationarity. The few that exist require additional assumptions on the structure of the DC~components. Hence, our future research is dedicated to the development of an algorithm for constrained, nonsmooth DC~optimization which gets along without such additional postulations but despite provides stronger convergence results.

\appendix

\section{Proof of Lemmas \NoCaseChange{\ref{lem: CQs_stepOne}} and~\ref{lem: CQs}} \label{append: proof_CQs}
\begin{proof}[Proof of \cref{lem: CQs_stepOne}]
    First, we prove via contradiction that whenever mEMFCQ holds at $\widehat{x}$ also NNAMCQ is satisfied at $\widehat{x}$. To this end, assume that $ 0 \in \nolinebreak[3] \interior \left( \ell (C) \right)$ and there exists $d \in T_C \left( \widehat{x} \right)$ with $Ad=0$ and $c_i' \left( \widehat{x}; d \right)<0$ for all $i \in I \left( \widehat{x} \right)$, as well as a nonzero vector $(\lambda, \mu) \in \R^m_+ \times \R^p$ satisfying
    \[ 0 \in \left\{ A^T\mu \right\} + \sum_{i=1}^m \lambda_i \partial c_i\left( \widehat{x} \right) + N_C \left( \widehat{x} \right) \quad \text{and} \quad \lambda^Tc \left(\widehat{x}\right) = 0. \]
    This implies the existence of some $\omega_i \in \partial c_i \left( \widehat{x} \right) $ for all $i \in I \left( \widehat{x} \right) $ and some $\nu \in N_C \left( \widehat{x} \right)$ with
    \begin{equation} \label{eq: proofCQNAAMCQvec}
        0=  A^T \mu + \sum_{i \in I \left( \widehat{x} \right)} \lambda_i \omega_i + \nu,
    \end{equation}
    where we have already taken into account that $\lambda_i =0$ for all $i \notin I \left(\widehat{x}\right)$ by feasibility of~$\widehat{x}$, non-negativity of $\lambda$ and the complementary slackness condition.
    Taking the inner product with $d$, yields
    \begin{equation} \label{eq: proofCQscalarproduct}
        0=\mu^T Ad + \sum_{i \in I \left( \widehat{x} \right)} \lambda_i \omega_i^T d + \nu^T d.
    \end{equation}
    By assumption, for each $i \in I \left( \widehat{x} \right)$, we have
    \[ 0 > c_i ' \left( \widehat{x}; d \right) = \max_{\widetilde{\omega} \in \partial c_i \left( \widehat{x}\right)} \widetilde{\omega}^Td \geq \omega_i^Td. \]
    In addition, since $N_C \left(\widehat{x}\right) = \left( T_C \left( \widehat{x} \right) \right)^\circ$, it holds $\nu^Td \leq 0$ as well. Hence, combining the last two conclusions with~\eqref{eq: proofCQscalarproduct}, $\lambda$ being non-negative and $Ad=0$ it follows $\lambda_i=0$ for all $i \in I \left( \widehat{x}\right)$ (hence, $\lambda=0$) as well as $\nu^Td=0$. Therefore, \eqref{eq: proofCQNAAMCQvec} reduces to $0 = A^T\mu + \nu$ showing $-A^T \mu \in N_C \left(\widehat{x} \right)$. In particular, $\mu \neq 0$ as we assumed $(\lambda, \mu)$ to be nonzero. By definition of the normal cone from convex analysis, we have
    \[ \left(-A^T \mu\right)^T \left( x - \widehat{x} \right) \leq 0 \quad \forall x \in C \]
    or, equivalently, taking into account the feasibility of $\widehat{x}$ and the definition of $\ell$,
    \[ - \mu^T \ell(x) \leq 0 \quad \forall x \in C. \]
    But since $0 \in \interior \left( \ell(C)\right)$, for sufficiently small $\epsilon >0$ one has $-\epsilon \mu \in \ell(C)$ and thus
    \[ \epsilon \Vert \mu \Vert \leq 0\]
    leading to $\mu =0$, a contradiction.

    The reverse implication is shown via contraposition. To this end, we assume that mEMFCQ is violated at $\widehat{x}$. This means either $0 \notin \nolinebreak \interior \left( \ell (C) \right)$ or  $ 0 \notin S$ with $S \subseteq  \R^p \times \R^{\left\vert I \left(\widehat{x} \right) \right\vert}$ being defined as
    \begin{equation} \label{eq: proofCQs_setS}
        S:= \left\{ (z,\alpha) \ \middle| \ \exists d \in T_C \left(\widehat{x}\right), s\in \R^{\left\vert I \left(\widehat{x} \right) \right\vert},s>0: \ Ad=z, \ c_i'\left(\widehat{x};d \right) +s_i = \alpha_i \ \forall i \in I \left(\widehat{x}\right) \right\}.
    \end{equation}
    Let us consider the case $0 \notin \interior \left( \ell (C) \right)$ first. 
    Under this circumstance, there exists some (nontrivial) vector $v \in \R^p$ such that $\beta v \notin \ell (C)$ for all $\beta>0$ or, equivalently, $\left\{ \beta v \ \middle| \ \beta > 0\right\} \cap \ell(C) = \emptyset$. (For example, one necessarily has $\beta e_i \notin C$ or $-\beta e_i \notin C$ for all $\beta>0$ and at least one $i \in \{1,\dots, p\}$ with $e_i$ denoting the $i$th standard basis vector of $\R^p$.) Since, in addition, both $\ell(C)$ as the image of a convex set under an affine map and $\left\{ \beta v \ \middle| \ \beta>0 \right\}$ are convex, one can apply the separation theorem~\ref{thm: separationTheorem} to obtain the existence of a nontrivial vector $\mu \in \R^p$ with $ \mu^T y \leq \beta \mu^T v $ for all $y \in \ell(C)$ and $\beta>0$. But this implies $\mu^T y \leq 0$ for all $y \in \ell (C)$. 
    Plugging in the definition of $\ell$ and exploiting feasibility of $\widehat{x}$ yields
    \[ 0 \geq \mu^T (Ax-b) = \mu^T A \left(x- \widehat{x}\right) = \left( A^T \mu \right)^T \left(x - \widehat{x} \right) \quad \forall x \in C, \]
    which proves  $A^T \mu \in N_C \left( \widehat{x} \right)$ or, equivalently,
    \[ 0 \in \left\{-A^T \mu \right\} + N_C \left(\widehat{x}\right).\]
    Hence, $(0,-\mu) \in \R^m_+ \times \R^p$ is a nontrivial vector satisfying~\eqref{eq: NNAMCQ}, in contrast to NNAMCQ. 

    Next, consider the case $0 \notin S$ with $S$ defined by~\eqref{eq: proofCQs_setS}. 
    Note that $C$ being convex implies that $T_C \left(\widehat{x}\right)$ is also convex, from which one can easily deduce the convexity of the set $S$. 
    Thus, we can apply the separation theorem~\ref{thm: separationTheorem} to the disjoint sets $\{0\}$ and $S$. It follows that there exists a nontrivial vector $(\mu, \lambda ) \in \R^p  \times \R^{\left\vert I \left( \widehat{x} \right)\right\vert}$ with 
    \[ \mu^Tz+ \lambda^T \alpha \geq 0 \qquad \forall (z,\alpha) \in S, \]
    which gives
    \[ \mu^T Ad + \sum_{i \in I \left( \widehat{x} \right) } \lambda_i \left( c_i'\left( \widehat{x};d \right)+s_i \right) \geq 0 \qquad \forall d \in T_C \left( \widehat{x} \right), s>0. \]
    Since this relation holds for all $s > 0$, we immediately obtain both $\lambda \geq 0$ and
    \[ \mu^T Ad + \sum_{i \in I \left(\widehat{x}\right) } \lambda_i c_i'\left(\widehat{x};d \right) \geq 0 \qquad \forall d \in T_C \left(\widehat{x} \right). \]
    Hence, using~\eqref{eq: Subdiff_dirDerivative}, there exists for each $i \in I \left( \widehat{x}\right)$ some $\omega_i \in \partial c_i \left( \widehat{x}\right)$ such that
    \[ \mu^T Ad + \sum_{i \in I \left( \widehat{x}\right)} \lambda_i \omega_i^T d \geq 0 \qquad \forall d \in T_C \left( \widehat{x} \right). \]
    This proves $- \left( A^T \mu + \sum_{i \in I \left( \widehat{x} \right) } \lambda_i \omega_i \right) \in \left( T_C \left( \widehat{x} \right) \right)^{\circ}$. Exploiting the relationship~\eqref{eq: NormalToTangentCone}, we obtain 
    \[ 0 \in \left\{A^T \mu \right\} + \sum_{i \in I \left( \widehat{x} \right) }\lambda_i \partial c_i \left( \widehat{x} \right) + N_C \left( \widehat{x} \right). \]
    Finally setting $\lambda_i=0$ for all $ i \notin I \left(\widehat{x}\right)$, we see that $\left(\lambda, \mu\right)\in \R^m_+ \times \R^p$ is as nontrivial vector satisfying~\eqref{eq: NNAMCQ}, a contradiction to NNAMCQ.
\end{proof}

\begin{proof}[Proof of \cref{lem: CQs}]
    Let $\widetilde{x} \in \R^n$ denote a Slater point of~\eqref{eq: problemFormulation}. It remains to show that, for an arbitrary feasible point $\widehat{x} \in \R^n$ of~\eqref{eq: problemFormulation}, there exists $d \in T_C \left( \widehat{x} \right)$ with $Ad=0$ and $c_i'\left(\widehat{x};d \right) < 0$ for all $i \in I \left(\widehat{x} \right)$. Hence, for a feasible point $\widehat{x} \in \R^n$, consider $d:= \widetilde{x}-\widehat{x}$. Then, $d \in T_C \left( \widehat{x} \right)$ as can be verified by choosing an arbitrary null sequence $\left\{t_k \right\}_{k \in \N_0} \subseteq (0,1)$, setting $x^k:= \widehat{x}+t_k d$ for $k \in \N_0$ and noting that $\left\{x^k \right\}_{k \in \N_0} \subseteq C$ holds by convexity of $C$, $x^k \rightarrow \widehat{x}$ as $k \rightarrow \infty$ and $\frac{x^k-\widehat{x}}{t_k}=d \rightarrow d$. Moreover, for $i \in I \left( \widehat{x} \right)$, we have
    \[ 0 > c_i \left( \widetilde{x} \right) = c_i \left( \widehat{x}+d \right) \geq c_i \left( \widehat{x} \right) + \omega^T d = \omega^T d \]
    for all $\omega \in \partial c_i \left( \widehat{x} \right)$. Consequently, we get
    \[ 0 > \max_{\omega \in \partial c_i \left( \widehat{x} \right)} \omega^Td = c_i' \left( \widehat{x};d \right) \]
    for all $i \in I \left( \widehat{x} \right)$. Since we clearly have
    \[ Ad = A \widetilde{x} - b - \left( A \widehat{x} -b \right) =0 \]
    by feasibility of $\widetilde{x}$ and $\widehat{x}$, the claim follows.

    Now, suppose mEMFCQ holds at some feasible point of~\eqref{eq: problemFormulation} and mSCQ is violated. Since $0 \in \interior \left( \ell (C) \right)$ by assumption, this means that $0 \notin U$, where $U$ is defined by
    \[ U:= \left\{ (z,\alpha) \in \R^p \times \R^m \ \middle| \ \exists x \in C, s \in \R^m,s>0: \ Ax-b=z, \ c(x)+s=\alpha \right\}.  \]
    A simple calculation shows that $U$ is a convex set. 
    Hence, we can apply the separation theorem~\ref{thm: separationTheorem} to the disjoint sets $\{0\}$ and $U$ to get a nontrivial vector $( \mu,\lambda) \in \R^+ \times \R^m$ satisfying
    \[ \mu^T z + \lambda^T \alpha \geq 0 \qquad \forall (z,\alpha) \in U \]
    or, equivalently,
    \[ \mu^T (Ax-b) + \lambda^T \left(c(x)+s \right) \geq 0 \qquad x \in C, \ s>0. \]
    Since this inequality has to hold for all $s>0$, we obtain $\lambda\geq 0$ and 
    \begin{equation} \label{eq: proofCQs_separationNoSCQ}
        \mu^T (Ax-b) + \lambda^T c(x) \geq 0
    \end{equation}
    for all $ x \in C$.
    Now, consider any feasible point $\widehat{x}$ of~\eqref{eq: problemFormulation}. Then,~\eqref{eq: proofCQs_separationNoSCQ} implies $\lambda^T c \left( \widehat{x}\right) \geq 0$ which immediately shows $\lambda^T c\left( \widehat{x} \right)=0$ due to $\lambda \geq 0$ and $c \left( \widehat{x} \right) \leq 0$. Consequently, each feasible point satisfies~\eqref{eq: proofCQs_separationNoSCQ} with equality and hence, solves
    \[ \min_{x \in C} \left\{ \mu^T (Ax-b) + \lambda^T c(x)\right\}. \]
    Since this is a convex optimization problem, $\widehat{x}$ solving this problem is equivalent to (cf. \cref{thm: cvx_OptCond})
    \[ 0 \in \left\{ A^T \mu \right\} + \sum_{i=1}^m \lambda_i  \partial c_i \left( \widehat{x} \right) + N_C \left( \widehat{x} \right)\]
    where one also exploits the basic subdifferential calculus rules from \cref{thm: cvxSubdiff_calculus}\ref{thm: cvxSubdiff_sumRule} and \ref{thm: cvxSubdiffDifferentiable}. However, this shows that NNAMCQ does not hold at any feasible point of~\eqref{eq: problemFormulation}. Moreover, due to \cref{lem: CQs_stepOne} this is equivalent to mEMFCQ not being satisfied at any feasible point of~\eqref{eq: problemFormulation}, contradicting our assumption.
\end{proof}

\bibliographystyle{jnsao} 
\bibliography{Literatur_sALMDC}

\end{document}